\newcommand{\Rmnum}[1]{\expandafter\@slowromancap\romannumeral #1@}
\newtheorem{theorem}{Theorem}[section]
\newtheorem{lemma}{Lemma}[section]
\newtheorem{proposition}{Proposition}[section]
\newtheorem{remark}{Remark}[section]
\theoremstyle{definition}
\numberwithin{equation}{section}
\begin{document}
\author[G.-Y. Hong]{Guangyi Hong\textsuperscript{$ \ast $}}\thanks{$^{\ast}$Corresponding author.}\address{Guangyi Hong\newline\indent Department of Applied Mathematics\newline\indent Hong Kong Polytechnic University\newline\indent Hung Hom,
Kowloon, Hong Kong, P. R. China}
\email{gyhmath05@outlook.com}

\author[Z.-A. Wang]{Zhi-an Wang}\address{Zhi-an Wang\newline\indent Department of Applied Mathematics\newline\indent Hong Kong Polytechnic University\newline\indent Hung Hom,
Kowloon, Hong Kong,  P. R. China}\email{mawza@polyu.edu.hk}


\title[exogenous chemotaxis systems with physical boundary conditions]{\bf Asymptotic stability of exogenous chemotaxis systems with physical boundary conditions}
\begin{abstract}
In this paper, we consider the exogenous chemotaxis system with physical mixed zero-flux and Dirichlet boundary conditions in one dimension. Since the Dirichlet boundary condition can not contribute necessary estimates for the cross-diffusion structure in the system, the global-in-time existence and asymptotic behavior of solutions remain open up to date. In this paper, we overcome this difficulty by employing the technique of taking anti-derivative so that the Dirichlet boundary condition can be fully used, and show that the system admits global strong solutions which exponentially stabilize to the unique stationary solution as time tends to infinity against some suitable small perturbations.  To the best of our knowledge, this is the first result obtained on the global well-posedness and asymptotic behavior of solutions to the exogenous chemotaxis system with physical boundary conditions.



\vspace*{4mm}
\noindent{\sc 2020 Mathematics Subject Classification.} 35K51, 35B40, 35Q92, 92C17

\vspace*{1mm}
 \noindent{\sc Keywords. }Exogenous chemotaxis, steady state, asymptotic behavior, anti-derivative, energy method

\end{abstract}
\maketitle

 \section{Introduction} 
 \label{sec:introduction}
Chemotaxis, the directional movement of cells in response to a chemical stimulus gradient, is important for bacteria to find food (e.g., glucose) or to flee from poisons  \cite{wadhams2004making} and critical to early development, normal function such as wound healing/inflammation and pathological process like cancer metastasis \cite{wang2011signaling}. Mathematical models of chemotaxis were firstly developed by Keller-Segel in 1970s with two prototypes describing endogenous and exogenous chemotaxis, respectively. In the endogenous chemotaxis, cells respond to a chemical signal that is released from cells themselves. While in the exogenous chemotaxis, cells respond to an external chemical signal (such as oxygen, light or food). The typical example of endogenous chemotaxis is the spontaneous aggregation of {\it Dictyostelium discoideum} (Dd) cells in response to the chemical cyclic adenosine monophosphate (cAMP) secreted by Dd cells \cite{Bonner}, which was first modeled mathematically by Keller and Segel in \cite{keller1970initiation-produce}. For such aggregation Keller-Segel models, the homogeneous Neumann boundary conditions are usually prescribed to reproduce the aggregating patterns \cite{horstmann20031970, Hillen-Painter-JMB}. The prominent example of exogenous chemotaxis was reported in \cite{Adler66} where motile {\it Escherichia coli} placed at one end of a capillary tube containing an energy source and oxygen migrate out into the tube in the form of traveling bands clearly visible to the naked eye. The mathematical model was subsequently proposed by Keller and Segel in \cite{keller1971traveling}, which reads as
\begin{gather}\label{multi-D-model}
 \displaystyle \begin{cases}
 \displaystyle  u _{t}=\Delta u - \nabla \cdot  \left(u\nabla \phi(v) \right)&\mbox{in}\ \ \Omega,\\[1mm]
\displaystyle v _{t}=D \Delta v - uv^m &\mbox{in}\ \ \Omega,
 \end{cases}
\end{gather}
where $u$ and $v$ denote the bacterial density and oxygen concentration, respectively, at position $x \in \Omega$ and time $t>0$. $D>0$ and $m>0$ account for the chemical diffusivity and consumption rate, respectively, and $\phi(v)$ is called the chemotactic sensitivity function which typically has two prototypes: $\phi(v)=\ln v$ (logarithmic sensitivity) and $\phi(v)=v$ (linear sensitivity). The logarithmic sensitivity was originally used in \cite{keller1971traveling} based on the Weber-Fechner law (the sensory response to a stimulus is logarithmic) which has various biological applications (cf. \cite{kalinin2009logarithmic, dehaene2003neural,LSN-Mathbio}). It was mentioned in \cite[p.241]{keller1971traveling} that the oxygen diffusion rate $D$ is negligible (i.e. $0<D\ll1$) compared to the bacterial diffusion rate. The existence of traveling wave solutions to \eqref{multi-D-model} with logarithmic sensitivity with $D\geq 0$ was shown in \cite{KellerOdell75ms, Schw1, wang2013mathematics} for any $0\leq m\leq 1$, while the stability of traveling wavefronts for the case $m=1$ was obtained in \cite{LW091,LW1,LLW,peng2018nonlinear, ChoiK1,ChoiK2} and the instability of pulsating wave for the case $m=0$ was investigated in \cite{davis2017absolute, nagai1991traveling}.

When considering the exogenous chemotaxis system \eqref{multi-D-model} in a bounded domain $\Omega$, the relevant physical boundary conditions (for instance see the experiment in \cite{Adler66}) are
 \begin{gather} \label{bd-conditions}
\displaystyle  \partial_\nu u-u \partial_\nu v =0,\ \ v =v _{\ast}\ \ \mbox{on}\ \ \partial \Omega,
\end{gather}
where $\partial_\nu=\frac{\partial}{\partial \nu}$ is the normal derivative on the boundary with $\nu$ denoting the outward unit normal vector of $\partial \Omega$, and the constant $v_*>0$ denotes the boundary value of $v$. That is, the zero-flux boundary condition and Dirichlet boundary condition are imposed to cell density $u$ and chemical concentration $v$, respectively. The Keller-Segel system \eqref{multi-D-model} subject to the boundary condition \eqref{bd-conditions} has also been used in the chemotaxis-fluid model in \cite{tuval2005bacterial} to describe the boundary accumulation layer of aerobic bacterial chemotaxis towards the drop edge (air-water interface) in a sessile drop mixed with {\it Bacillus subtilis} bacteria. The model in \cite{tuval2005bacterial} reads
\begin{gather}\label{fluid-chemo}
\displaystyle \begin{cases}
\displaystyle  u _{t}+ {\bf w} \cdot \nabla u= \Delta u- \nabla \cdot \left( u \nabla v \right),\\[1mm]
  \displaystyle v _{t}+{\bf w} \cdot \nabla v=D\Delta v-uv,\\[1mm]
  \displaystyle \rho( {\bf w}_{t}+{\bf w} \cdot \nabla {\bf w})=\mu \Delta {\bf w}+ \nabla p- V_b g u (\rho_b-\rho) {\bf z},\\[1mm]
  \nabla \cdot {\bf w}=0,
\end{cases}
\end{gather}
where $u$ and $v$ denote the bacterial and oxygen concentrations, respectively, and ${\bf w}$ is the fluid velocity governed by the incompressible Navier-Stokes equations with the pure fluid density $\rho$ and viscosity $\mu$. $p$ is a pressure function, $V_b g u (\rho_b-\rho) {\bf z}$ denotes the buoyant force along the upward unit vector ${\bf z}$ where $V_b$ and $\rho_b$ are the bacterial volume and density, respectively, and $g$ is the gravitational constant. With boundary conditions in  \eqref{bd-conditions} and the non-slip boundary condition for the fluid: ${\bf w}|_{\partial \Omega}=0$, the works \cite{chertock2012sinking,lee2015numerical, tuval2005bacterial} have shown that the system \eqref{fluid-chemo} can numerically reproduce the key features of experiment findings in \cite{tuval2005bacterial} in two and three dimensions.

Compared to a large number of results available to the endogenous chemotaxis models with Neumann boundary conditions (cf. \cite{horstmann20031970, Hillen-Painter-JMB, bellomo2015toward}), the basic questions like the global well-posedness of the exogenous chemotaxis system \eqref{multi-D-model} with physical boundary conditions in \eqref{bd-conditions} still remain poorly understood and only very limited analytical results are available so far. The primary obstacle is that the estimate of $\nabla v$, which is needed for the global boundedness of solutions due to the cross-diffusion structure in the first equation of \eqref{multi-D-model}, can not be achieved through the second equation of \eqref{multi-D-model} with the Dirichlet boundary condition which gives no information on $\nabla v$. On the half line $\mathbb{R}_{+}=(0,\infty)$, the existence and stability of the unique stationary solution $(\bar{u}, \bar{v})$ of \eqref{multi-D-model}-\eqref{bd-conditions} with $\phi(v)=\ln v$ was recently established in \cite{carrillo2020boundary} for any $m\geq 0$, where $(\bar{u}, \bar{v})$ is of a boundary (spike, layer) profile as $D>0$ is small. When $\phi(v)=v$, the existence of stationary solutions to \eqref{multi-D-model}-\eqref{bd-conditions} with $m=1$ was proved in \cite{lee2019boundary} for all dimensions and the existence of global weak solutions was established in \cite{winkler2018singular} in one dimension. The local existence of weak solutions to \eqref{fluid-chemo} on the water-drop shaped domain as in \cite{tuval2005bacterial} with \eqref{bd-conditions} and ${\bf w}|_{\partial \Omega}=0$ was proved in \cite{lorz2010coupled}. These appear to be the only results in the literature for the Keller-Segel system \eqref{multi-D-model} subject to the physical boundary conditions given \eqref{bd-conditions}. We also mention another result in \cite{Marcel-Lankeit} where the existence of stationary solutions of \eqref{multi-D-model} with $\phi(v)=v$ and $m=1$ was established for all dimensions when the Dirichlet condition for $v$ in \eqref{bd-conditions} was replaced by a boundary condition based on Henry's law modeling the dissolution of gas in water. The purpose of this paper is to further make a progress in this direction for the Keller-Segel system \eqref{multi-D-model} with linear sensitivity and boundary conditions in \eqref{bd-conditions} on a bounded interval $\mathcal{I}:=(0,1)$. Specifically we consider the following problem
\begin{gather}\label{1-D-model}
 \displaystyle \begin{cases}
 \displaystyle  u _{t}=u _{xx}-(u v _{x})_{x}&\mbox{in}\ \ \mathcal{I},\\[1mm]
\displaystyle v _{t}=D v _{xx} - uv &\mbox{in}\ \ \mathcal{I},\\
(u,v)(x,0)=(u_{0}, v _{0})(x)   &\mbox{in}\ \ \mathcal{I}
 \end{cases}
\end{gather}
subject to the following boundary conditions:
\begin{subequations}\label{bdc}
\begin{numcases}
\displaystyle\makebox[-4pt]{~}  (u _{x}-u v _{x})\vert _{x=0,1}=0, \ \ \ v(0,t)=v(1,t)= v _{\ast}& \mbox{if }$ D>0 $,\label{bd-con-Dnon-0}\\[1mm]
\displaystyle\makebox[-4pt]{~} (u _{x}-u v _{x})\vert _{x=0,1}=0& \mbox{if }$ D=0 $.\label{bdcon-D-0}
\end{numcases}
\end{subequations}
By integrating the first equation of \eqref{1-D-model} along with the boundary condition \eqref{bd-con-Dnon-0}, one immediately finds that cell mass is conserved:
$$\displaystyle\int_{\mathcal{I}} u(x,t)\mathrm{d} x=\int_{\mathcal{I}}u_0(x)\mathrm{d}x:=M,$$
where $M>0$ denotes the initial cell mass. Then the stationary solution $(\bar{u},\bar{v})$ of \eqref{1-D-model} with $D>0$ satisfies
\begin{gather}\label{stat-problem}
\displaystyle \begin{cases}
\displaystyle	\bar{u}_{xx}-(\bar{u}\bar{v}_{x})_{x}=0, &x \in \mathcal{I},\\
	\displaystyle D \bar{v}_{xx}-\bar{u}\bar{v}=0,&x \in \mathcal{I},\\
\displaystyle\int_{\mathcal{I}} \bar{u}\mathrm{d} x=M,\\
		\displaystyle (\bar{u}_{x}-\bar{u}\bar{v}_{x})\vert _{x=0,1}=0,\ \ \bar{v}\vert _{x=0,1}=v _{\ast}.
\end{cases}
\end{gather}
The results of \cite{lee2019boundary} assert that the stationary problem \eqref{stat-problem} with $D>0$ admits a unique non-constant classical solution $ (\bar{u},\bar{v})$ which is of a boundary layer profile as $D>0$ is small. While for the case $ D=0 $, the system \eqref{1-D-model} with \eqref{bdcon-D-0} clearly has a unique constant solution $(M,0)$.

The goal of this paper is to show that if the initial datum $(u_0,v_0)$ is a small perturbation of the stationary solution  $(\bar{u},\bar{v})$, then the system \eqref{1-D-model} with \eqref{bd-con-Dnon-0}-\eqref{bdcon-D-0} admits a unique solution $(u,v)$ satisfying for any $D\geq 0$:
\begin{gather*}
\|(u,v)-(\bar{u},\bar{v})\|_{L^\infty} \to 0 \ \text{exponentially as} \ t \to \infty,
\end{gather*}
where $(\bar{u},\bar{v})=(M,0)$ if $D=0$ and $(\bar{u},\bar{v})$ is the non-constant stationary solution satisfying \eqref{stat-problem} if $D>0$. As we know, this is the first result on the global well-posedness and asymptotic dynamics of the system \eqref{multi-D-model}-\eqref{bd-conditions}. We note that it was shown in \cite{lee2019boundary} that the unique non-constant stationary solution of \eqref{stat-problem} enjoys a boundary layer profile as $D>0$ is small, while only constant stationary solution exits when $D=0$. With this fact and  the boundary conditions in \eqref{bdc}, we may speculate that the solution of \eqref{1-D-model}-\eqref{bd-con-Dnon-0} with $D>0$ will not converge to that of \eqref{1-D-model}-\eqref{bdcon-D-0} with $D=0$ as $D \to 0$, where the boundary layer will arise to correct this discrepancy. Therefore the convergence of solutions to \eqref{1-D-model}-\eqref{bd-con-Dnon-0} with $D>0$ as $D \to 0$ is a very interesting question and we shall investigate it in a separate paper. When $\phi(v)=\ln v$ and the Dirichlet boundary condition for $u$ and Robin boundary condition for $v$ are given, the convergence of solutions for \eqref{multi-D-model} with $\phi(v)=\ln v$ and $m=1$ as $D \to 0$ has been shown in \cite{Hou-Wang-SIMA, Hou-Wang-JMPA}. But they are completely differently from the convergence of \eqref{1-D-model} with \eqref{bd-con-Dnon-0} as $D \to 0$ due to distinct sensitivity function $\phi(v)$ and boundary conditions.
\medskip

{\bf Sketch of proof ideas}. As mentioned previously, the boundary conditions for $v$ in \eqref{bdc} refrain from deriving the estimates of $v_x$ which is, however, necessary to establish the global well-posedness of solutions of \eqref{1-D-model}--\eqref{bdc} due to the cross-diffusion structure in the first equation \eqref{1-D-model}. To overcome this barrier, by observing that the first equation of \eqref{1-D-model} is conserved with zero-flux boundary condition on $u$, we develop an idea by considering the primitive function of $u$ in space, say $\varrho$, and establish the equation of $\varrho$ which no longer has cross-diffusion structure and the Dirichlet boundary condition of $v$ can make essential contributions. As such, we derive the boundedness and stability of $(\varrho, v)$ by the delicate (weighted) energy estimates first and then transfer the results to $(u,v)$. This is our rough idea, and precise procedures are presented in section 3 for the case $D>0$ and in section 4 for $D=0$.
Indeed, the analysis for the case $D=0$ appears to be easier than $D>0$ since its background profile is constant, and thus no weighted estimates are needed. However, since $v$-equation is an ODE and lacks the diffusive dissipation, we need to make full use of the ODE structure along with the explicit formula of $v$ to derive some delicate higher-order estimates which, in turn, requires stronger smallness constraints upon the initial datum compared to the case $ D>0$.

\medskip

The rest of this paper is organized as follows: In Sec. \ref{sec:main_results}, we  state our main results. In Sec. \ref{Asymptiotic-stability}, we investigate the asymptotic behavior of solutions to the system with $ D>0$. Finally, the asymptotic behavior of the solution for the case $ D=0 $ will be proved in Sec. \ref{sec-asymptotic-D0}.

\vspace*{5mm}

\section{Statement of main results} 
\label{sec:main_results}

In this section, we introduce the results on the stationary problem \eqref{stat-problem} from \cite{lee2019boundary} and state our main results on the asymptotic stability of stationary solutions. Throughout the paper, we denote by $ L ^{\infty} $, $ L ^{2} $, $ H _{0}^{1} $ and $ H ^{k} $ the standard function spaces $ L ^{\infty}(\mathcal{I}) $, $ L ^{2}(\mathcal{I}) $, $ H _{0}^{1}(\mathcal{I}) $ and $ H ^{k}(\mathcal{I}) $, respectively. We denote by $\bar{\mathcal{I}}$ the closure of $\mathcal{I} $ and by $ C $ a generic time-independent constant which may take different values in different places. In the sequel, we often omit $\mathcal{I}$ without ambiguity.

\begin{proposition}[Theorem 2.1 in \cite{lee2019boundary}]\label{prop-stat}
For any $ M \in (0,\infty) $, the problem \eqref{stat-problem} with $ D>0 $ admits a unique classical non-constant solution $ (\bar{u}, \bar{v}) \in C ^{1}(\bar{\mathcal{I}}) \cap C ^{\infty}(\mathcal{I}) $ such that
\begin{gather}
\displaystyle   \bar{u}=\frac{M}{\int _{\mathcal{I}}		{\mathop{\mathrm{e}}}^{\bar{v}}\mathrm{d}x}		{\mathop{\mathrm{e}}}^{\bar{v}}, \ \
 \bar{u}>0,\ \ \  0< \bar{v} \leq v _{\ast}\ \ \ \mbox{for any}\ \  x \in \bar{\mathcal{I}}. \label{stat-iden}
\end{gather}

 \end{proposition}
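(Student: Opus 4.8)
The plan is to collapse the coupled system \eqref{stat-problem} into a single scalar boundary value problem for $\bar{v}$. The no-flux condition forces $\bar{u}_x - \bar{u}\bar{v}_x \equiv 0$ on $\bar{\mathcal{I}}$, and integrating this linear first-order relation for $\bar{u}$ gives $\bar{u} = C e^{\bar{v}}$ for a constant $C$, necessarily positive since $\int_{\mathcal{I}}\bar{u}\,\mathrm{d}x = M > 0$; the mass constraint then fixes $C = M/\int_{\mathcal{I}}e^{\bar{v}}\,\mathrm{d}x$, which is exactly \eqref{stat-iden}. Substituting into the second equation, $\bar{v}$ must solve the nonlocal problem
\begin{equation}\label{pp-scalar}
D\bar{v}_{xx} = \frac{M e^{\bar{v}}}{\int_{\mathcal{I}} e^{\bar{v}}\,\mathrm{d}x}\,\bar{v} \quad \text{in } \mathcal{I}, \qquad \bar{v}(0) = \bar{v}(1) = v_\ast,
\end{equation}
and conversely any solution of \eqref{pp-scalar} yields a solution of \eqref{stat-problem} via \eqref{stat-iden}. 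Thus everything reduces to showing that \eqref{pp-scalar} has a unique solution, which is non-constant and obeys $0 < \bar{v} \le v_\ast$.

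For the a priori bounds, the right-hand side of \eqref{pp-scalar} is $\ge 0$ wherever $\bar{v} \ge 0$: evaluating the equation at an interior maximum and an interior minimum and comparing with the boundary value $v_\ast > 0$ rules out an interior maximum above $0$ and an interior minimum below $0$, so $0 \le \bar{v} \le v_\ast$. Then $\bar{v}_{xx} \ge 0$, so $\bar{v}$ is convex; it cannot be constant, because the only constant candidate $\bar{v} \equiv v_\ast$ substituted into \eqref{pp-scalar} would force $M v_\ast = 0$. Since $\bar{v}_{xx} > 0$ wherever $\bar{v} > 0$, $\bar{v}$ is in fact strictly convex on $\mathcal{I}$ with a unique interior minimum, which by reversibility of the autonomous ODE lies at $x = 1/2$; moreover $\bar{v} > 0$ throughout $\mathcal{I}$, since otherwise $\bar{v}$ and $\bar{v}_x$ would vanish at that minimum, forcing $\bar{v} \equiv 0$ by ODE uniqueness. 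Standard ODE/elliptic regularity then upgrades $\bar{v}$, hence $\bar{u} = C e^{\bar{v}}$, to $C^1(\bar{\mathcal{I}}) \cap C^\infty(\mathcal{I})$.

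For existence I would free the parameter $C$: for fixed $C > 0$, the problem $D w_{xx} = C e^{w} w$ in $\mathcal{I}$, $w(0)=w(1)=v_\ast$, is the Euler--Lagrange equation of $J_C(w) = \int_{\mathcal{I}}\bigl(\tfrac{D}{2} w_x^2 + C\Psi(w)\bigr)\mathrm{d}x$ on $v_\ast + H_0^1$, where $\Psi(s) = e^s(s-1)$ satisfies $\Psi'(s) = e^s s$ and attains its minimum at $s = 0$. Since $\Psi$ is bounded below, $J_C$ is coercive and weakly lower semicontinuous, so a minimizer $\bar{v}_C$ exists, and truncating at $0$ and at $v_\ast$ does not increase $J_C$, hence $0 \le \bar{v}_C \le v_\ast$. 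For fixed $C$ the solution is unique: subtracting the equations for two solutions and integrating against their difference gives $D\int (w_1-w_2)_x^2 = \int \bigl(f(w_1)-f(w_2)\bigr)(w_1-w_2) \le 0$ with $f(s) = -C e^s s$ nonincreasing on $[0,\infty)$. A comparison argument ($\bar{v}_{C_1}$ being a strict supersolution of the $C_2$-problem and $0$ a subsolution when $C_1 < C_2$) shows $C \mapsto \bar{v}_C$ is strictly decreasing on $\mathcal{I}$, so $g(C) := C\int_{\mathcal{I}}e^{\bar{v}_C}\,\mathrm{d}x$ is continuous with $g(C) \le C e^{v_\ast} \to 0$ as $C \to 0^+$ and $g(C) \ge C \to \infty$ as $C \to \infty$; by the intermediate value theorem there is $C_M$ with $g(C_M) = M$, and $\bar{v}_{C_M}$ solves \eqref{pp-scalar}.

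The main obstacle is the uniqueness of the full solution, i.e. injectivity of $C \mapsto g(C)$, which the simple comparison above does not give (it only exhibits $g$ as a product of an increasing and a decreasing factor). Here I would use a time-map: multiplying $D\bar{v}_{xx} = C e^{\bar{v}}\bar{v}$ by $\bar{v}_x$ and integrating yields the first integral $\tfrac{D}{2}\bar{v}_x^2 = C\bigl(\Phi(\bar{v}) - \Phi(v_m)\bigr)$ with $\Phi(s) = e^s(s-1)$ and $v_m := \bar{v}(1/2)$, after which $\int_{1/2}^1\mathrm{d}x = \tfrac{1}{2}$ and $M = C\int_{\mathcal{I}}e^{\bar{v}}\,\mathrm{d}x$ become $C = 2D\,I_0(v_m)^2$ and $M = 2D\,I_0(v_m)I_1(v_m)$, where $I_j(v_m) := \int_{v_m}^{v_\ast} e^{j\bar{v}}\bigl(\Phi(\bar{v})-\Phi(v_m)\bigr)^{-1/2}\,\mathrm{d}\bar{v}$ for $j = 0,1$. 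It then remains to show that $v_m \mapsto D\,I_0(v_m)I_1(v_m)$ is strictly decreasing from $+\infty$ (as $v_m \to 0^+$, where both integrals diverge logarithmically since $\Phi(\bar{v}) - \Phi(0) \sim \bar{v}^2/2$ near $\bar{v} = 0$) to $0$ (as $v_m \to v_\ast^-$), whence $v_m$, then $C$, then $\bar{v}$ and $\bar{u}$, are uniquely determined by $M$. I expect this monotonicity of the time-map --- proved after the substitution $\bar{v} = v_m + (v_\ast - v_m)t$ and differentiation in $v_m$, using $\Phi' > 0$ on $(0,\infty)$ --- to be the only genuinely delicate point; everything else is routine.
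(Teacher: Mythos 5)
First, a point of reference: the paper gives no proof of this proposition --- it is imported verbatim as Theorem~2.1 of \cite{lee2019boundary} --- so there is no internal argument to compare yours against, and your proposal has to stand on its own. Most of it does. The reduction $\bar u = Ce^{\bar v}$ from integrating the first equation with the no-flux condition, the identification $C = M/\int_{\mathcal I}e^{\bar v}\,\mathrm{d}x$, the maximum-principle bounds $0\le \bar v\le v_\ast$, strict positivity via ODE uniqueness at a touching interior minimum, non-constancy, the variational existence for fixed $C$ with truncation, uniqueness for fixed $C$ from the monotonicity of $s\mapsto e^{s}s$ on $[0,v_\ast]$, and the intermediate-value argument for $g(C)=C\int_{\mathcal I} e^{\bar v_C}\,\mathrm{d}x$ are all correct, as is the derivation of the time-map identities $C=2D\,I_0(v_m)^2$ and $M=2D\,I_0(v_m)I_1(v_m)$ and the boundary behaviour of $I_0I_1$ as $v_m\to 0^+$ and $v_m\to v_\ast^-$.

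The genuine gap is exactly where you flag it: global uniqueness rests entirely on the strict monotonicity of $v_m\mapsto I_0(v_m)I_1(v_m)$, and you only state that you \emph{expect} it. Moreover, the route you sketch --- the linear substitution $\bar v=v_m+(v_\ast-v_m)t$ followed by differentiation in $v_m$ --- does not obviously close: the $v_m$-derivative of the resulting integrand contains the factor $e^{v(t)}\bigl(-1+(v_\ast-v_m)(1-t)\bigr)$ from the prefactor together with a competing term from the denominator, and neither is sign-definite. The claim is nevertheless true, and a different substitution settles it cleanly. Since $\Phi(s)=e^{s}(s-1)$ has $\Phi'(s)=se^{s}>0$ on $(0,v_\ast]$, set $\eta=\bigl(\Phi(\bar v)-\Phi(v_m)\bigr)^{1/2}$, so that $\mathrm{d}\bar v = 2\eta\,\mathrm{d}\eta/\bigl(v(\eta)e^{v(\eta)}\bigr)$ with $v(\eta)=\Phi^{-1}\bigl(\Phi(v_m)+\eta^{2}\bigr)$, and hence $I_j(v_m)=2\int_0^{\eta_\ast}e^{(j-1)v(\eta)}v(\eta)^{-1}\,\mathrm{d}\eta$ with $\eta_\ast=\bigl(\Phi(v_\ast)-\Phi(v_m)\bigr)^{1/2}$. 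For each fixed $\eta$ the quantity $v(\eta)$ is strictly increasing in $v_m$, so the integrands $1/\bigl(ve^{v}\bigr)$ (for $j=0$) and $1/v$ (for $j=1$) are strictly decreasing in $v_m$, while $\eta_\ast$ decreases as well; therefore $I_0$ and $I_1$, and thus their product, are strictly decreasing in $v_m$, which completes your uniqueness argument. A minor additional remark: the assertion that the minimum sits at $x=1/2$ should be obtained from the first integral itself (integrating $\mathrm{d}x=\bar v_x^{-1}\mathrm{d}\bar v$ on either side of the minimum gives $x_m=1-x_m$) rather than from ``reversibility plus uniqueness,'' since at that point of your write-up the fixed-$C$ uniqueness has not yet been established.
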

 ~\\

Our first result is the asymptotic stability of stationary solutions obtained in Proposition \ref{prop-stat} for the initial-boundary value problem \eqref{1-D-model}, \eqref{bd-con-Dnon-0} as time goes to infinity.\\
\begin{theorem}\label{thm-asymtoptic}
Suppose that $ u _{0}\in H ^{1} $ and $ v _{0}\in H ^{2} $ with $ u _{0}\geq 0 $, $ v _{0}\geq 0 $ such that $ \int _{\mathcal{I}}u _{0}\mathrm{d}x=M $. Let $ (\bar{u}, \bar{v}) $ be the stationary solution given in Proposition \ref{prop-stat} with $ \int _{\mathcal{I}}\bar{u}\mathrm{d}x=M$ and define
\begin{gather*}
\displaystyle \varphi _{0}(x)=\int _{0}^{x}\left( u _{0}(y)-\bar{u}(y) \right) \mathrm{d}y.
\end{gather*}
Then there exists a constant $ \delta _{0}>0 $ such that if
\begin{gather*}
\displaystyle  \|\varphi _{0}\|_{H ^{1}}^{2}+\|v _{0}- \bar{v}\|_{L ^{2}}^{2}\leq \delta _{0},
\end{gather*}
then the initial-boundary value problem \eqref{1-D-model}, \eqref{bd-con-Dnon-0} admits a unique global solution $ (u,v) $ satisfying
\begin{gather*}
\displaystyle  u \in C([0,\infty); H ^{1})\cap L ^{2}(0,\infty;H ^{2}),\ \ v \in C([0,\infty);H ^{2})\cap L ^{2}(0,\infty;H ^{3}),
\end{gather*}
and the following asymptotic decay:
\begin{gather}\label{decay-in-thm}
\displaystyle \|(u- \bar{u}, v- \bar{v})(\cdot,t)\|_{L ^{\infty}} \leq C 		{\mathop{\mathrm{e}}}^{-\alpha t}  \ \ \mbox{for any }t \geq 0,
\end{gather}
where $ \alpha $ and $ C $ are positive constants independent of $ t $.
\end{theorem}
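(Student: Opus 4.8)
The only structural obstruction is the cross-diffusion $(uv_x)_x$ coupled with the Dirichlet datum for $v$, and I would remove it by passing to the spatial primitive of $u-\bar u$. Mass conservation gives $\int_{\mathcal I}(u_0-\bar u)\,\mathrm{d}x=0$, so
\[
\varphi(x,t):=\int_0^x\big(u(y,t)-\bar u(y)\big)\,\mathrm{d}y,\qquad \psi(x,t):=v(x,t)-\bar v(x)
\]
satisfy $\varphi(0,t)=\varphi(1,t)=0$ and $\varphi_x=u-\bar u$, while integrating the first equation of \eqref{1-D-model} together with the zero-flux condition yields $\varphi_t=u_x-uv_x$. Substituting $u=\bar u+\varphi_x$, $v=\bar v+\psi$ and using the stationary identities $\bar u_x=\bar u\bar v_x$ (a consequence of the first line of \eqref{stat-problem} and its boundary condition, equivalently of \eqref{stat-iden}) and $D\bar v_{xx}=\bar u\bar v$, one obtains the reduced system
\begin{align*}
\varphi_t&=\varphi_{xx}-\bar v_x\varphi_x-\bar u\,\psi_x-\varphi_x\psi_x,\\
\psi_t&=D\psi_{xx}-\bar v\,\varphi_x-\bar u\,\psi-\varphi_x\psi,
\end{align*}
now with \emph{homogeneous Dirichlet} boundary conditions for both unknowns and with the cross-diffusion gone. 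The plan is then: (i) local well-posedness of this semilinear parabolic system in $H^2\times H^2$ by a standard contraction argument (equivalently, quote it for $(u,v)$ and transfer); (ii) a global-in-time a priori bound by a weighted energy method; (iii) continuation to a global solution; (iv) transfer of the bounds and the decay back to $(u,v)=(\bar u+\varphi_x,\bar v+\psi)$.

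The crux is a \emph{weighted} basic estimate whose weights are matched to the drift coefficients. Multiplying the $\varphi$-equation by $\varphi/\bar u$, integrating, and using $\varphi_{xx}-\bar v_x\varphi_x=\bar u\,(\varphi_x/\bar u)_x$, all boundary terms vanish (since $\varphi=0$ at $x=0,1$) and one gets $\frac{\mathrm{d}}{\mathrm{d}t}\int\frac{\varphi^2}{2\bar u}+\int\frac{\varphi_x^2}{\bar u}=\int\varphi_x\psi-\int\frac{\varphi\varphi_x\psi_x}{\bar u}$. Multiplying the $\psi$-equation by $\psi/\bar v$, integrating, and using $D\bar v_{xx}=\bar u\bar v$ to compute $D\int\frac{\psi}{\bar v}\psi_{xx}=\tfrac12\int\frac{\bar u}{\bar v}\psi^2-D\int\bar v\big((\psi/\bar v)_x\big)^2$, one gets $\frac{\mathrm{d}}{\mathrm{d}t}\int\frac{\psi^2}{2\bar v}+\tfrac12\int\frac{\bar u}{\bar v}\psi^2+D\int\bar v\big((\psi/\bar v)_x\big)^2=-\int\psi\varphi_x-\int\frac{\varphi_x\psi^2}{\bar v}$. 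Adding the two identities, the linear cross terms $\int\varphi_x\psi$ and $-\int\psi\varphi_x$ cancel exactly, leaving
\[
\frac{\mathrm{d}}{\mathrm{d}t}\!\int\!\Big(\frac{\varphi^2}{2\bar u}+\frac{\psi^2}{2\bar v}\Big)+\int\frac{\varphi_x^2}{\bar u}+\tfrac12\int\frac{\bar u}{\bar v}\psi^2+D\!\int\!\bar v\Big(\Big(\frac{\psi}{\bar v}\Big)_{\!x}\Big)^{\!2}=-\int\frac{\varphi\varphi_x\psi_x}{\bar u}-\int\frac{\varphi_x\psi^2}{\bar v}.
\]
Since $\bar u,\bar v$ are bounded above and below by positive constants on $\bar{\mathcal I}$ by Proposition \ref{prop-stat}, the left-hand dissipation controls $\|\varphi_x\|_{L^2}^2$, $\|\psi\|_{L^2}^2$ and $D\|\psi_x\|_{L^2}^2$ (hence $\|\varphi\|_{L^2}^2$ via Poincaré), while the right-hand side is cubic; this exact cancellation — the weights $1/\bar u$ and $1/\bar v$ being precisely matched to the drift — is what makes the Dirichlet problem amenable to the energy method.

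Next I would climb the regularity ladder, testing the two equations with $-\varphi_{xx},-\psi_{xx}$ and with $\varphi_{xxxx},\psi_{xxxx}$ (equivalently, differentiating and testing) and combining all levels with large hierarchical weights, so as to obtain a closed system of differential inequalities for a small low-order energy $\mathcal N\sim\|\varphi\|_{H^1}^2+\|\psi\|_{L^2}^2$ (note $\mathcal N(0)=\|\varphi_0\|_{H^1}^2+\|v_0-\bar v\|_{L^2}^2$) and a bounded higher-order energy $\mathcal E\sim\|\varphi\|_{H^2}^2+\|\psi\|_{H^2}^2$; here the diffusion $D\psi_{xx}$ with $D>0$ is essential, supplying the dissipation of the $x$-derivatives of $\psi$. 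Every nonlinear term is cubic with at least two perturbation factors, so by the one-dimensional interpolation $\|f\|_{L^\infty}\le C\|f\|_{L^2}^{1/2}\|f\|_{H^1}^{1/2}$ the $L^\infty$-estimated factor is always $\lesssim\mathcal N^{1/4}(\mathcal N+\mathcal E)^{1/4}$, which is small once $\mathcal N$ is small and $\mathcal E$ is bounded; hence all nonlinearities are absorbed by the dissipation, yielding $\frac{\mathrm{d}}{\mathrm{d}t}\mathcal N+c\mathcal N\le0$ and $\frac{\mathrm{d}}{\mathrm{d}t}\mathcal E+c\mathcal E\le C\mathcal N$. A standard continuity/bootstrap argument then closes provided $\mathcal N(0)\le\delta_0$ is small enough (the threshold $\delta_0$ may depend on $D$ and on the finite norms $\|u_0\|_{H^1},\|v_0\|_{H^2}$), giving the asserted global existence, the stated regularity class upon integrating the dissipation in time, and — by Grönwall — exponential decay of $\mathcal N$ and $\mathcal E$; finally $\|u-\bar u\|_{L^\infty}=\|\varphi_x\|_{L^\infty}\le C\|\varphi_x\|_{H^1}\le C\sqrt{\mathcal E}$ and $\|v-\bar v\|_{L^\infty}=\|\psi\|_{L^\infty}\le C\|\psi\|_{H^1}\le C\sqrt{\mathcal E}$ give \eqref{decay-in-thm}.

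The step I expect to be genuinely delicate is the set of higher-order estimates: since only the combination $u_x-uv_x$ and the value of $v$ are prescribed on $\{0,1\}$, the variables $\varphi_x=u-\bar u$ and $\psi_x$ inherit no homogeneous boundary conditions, so the integrations by parts at the $H^2$ and $H^3$ levels generate boundary contributions involving $\varphi_{xx},\psi_{xx},\varphi_{tx},\psi_{tx}$ at the endpoints. These must be controlled using the identities forced by the boundary data — e.g.\ $\varphi_t=\psi_t=0$ on $\{0,1\}$, so that there $\varphi_{xx}=\bar v_x\varphi_x+\bar u\psi_x+\varphi_x\psi_x$ and $D\psi_{xx}=\bar v\,\varphi_x$ — together with trace and interpolation inequalities, and one must verify that every resulting boundary and interior nonlinear term is absorbed by the dissipation under the smallness of $\mathcal N$. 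Making this bookkeeping close is the heart of the argument.
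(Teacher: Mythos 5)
Your reformulation and your basic energy estimate coincide with the paper's: the anti-derivative $\varphi$, the perturbation system \eqref{vfi-psi-eq} with homogeneous Dirichlet data, the weights $1/\bar u$ and $1/\bar v$ matched to the drifts via $\bar u_x=\bar u\bar v_x$ and $D\bar v_{xx}=\bar u\bar v$, and the exact cancellation of the linear cross terms $\int\varphi_x\psi-\int\psi\varphi_x=0$ are all precisely Lemma \ref{lem-l2-esti}. Your version of the $\psi$-identity is in fact slightly cleaner than the paper's, which keeps the term $-D\int\psi\psi_x(1/\bar v)_x$ and controls it by the pointwise inequality $D\bar v_x^2\le \bar u\bar v^2$ of Lemma \ref{lem-stat}; note, however, that to convert your dissipation term $D\int\bar v\left(\left(\psi/\bar v\right)_x\right)^2$ back into a bound on $D\|\psi_x\|_{L^2}^2$ you still need that same inequality (or at least a bound on $\bar v_x$), so Lemma \ref{lem-stat} is not avoidable. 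The decay of the low-order energy via a large multiple of the weighted estimate plus the $\varphi_x$-estimate, and the final transfer $\|u-\bar u\|_{L^\infty}\le C\|\varphi_x\|_{L^2}^{1/2}\|\varphi_{xx}\|_{L^2}^{1/2}+C\|\varphi_x\|_{L^2}$ with decaying $L^2$-norms and merely bounded $H^2$-norms, also match the paper (the paper obtains the $\varphi_x$-estimate by testing with $\varphi_t$ rather than $-\varphi_{xx}$, but these are equivalent at this level since $\varphi_t$ vanishes on the boundary).

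The genuine divergence, and the gap, is in the higher-order estimates. You propose to test with $\varphi_{xxxx}$, $\psi_{xxxx}$ and to absorb the resulting boundary contributions using the endpoint identities $\varphi_{xx}=\bar v_x\varphi_x+\bar u\psi_x+\varphi_x\psi_x$ and $D\psi_{xx}=\bar v\varphi_x$ on $\{0,1\}$ together with trace inequalities. This is not merely delicate bookkeeping: two integrations by parts in $\int\varphi_t\varphi_{xxxx}$ and $\int\varphi_{xx}\varphi_{xxxx}$ produce the terms $\left[\varphi_{tx}\varphi_{xx}\right]_0^1$ and $\left[\varphi_{xx}\varphi_{xxx}\right]_0^1$, and neither $\varphi_{tx}=u_t$ nor $\varphi_{xxx}$ carries any boundary information (expressing $\varphi_{tx}$ from the equation reintroduces $\varphi_{xxx}$ at the endpoints, so the identities you cite do not close the loop); trace estimates for these quantities cost exactly the $H^3$-regularity you are trying to produce. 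The paper circumvents this entirely by never testing with higher spatial derivatives: in Lemma \ref{lem-bd-closure} it differentiates the system in $t$ and tests with $(\varphi_t,\psi_t)$, which \emph{do} vanish on the boundary since $\varphi=\psi=0$ there for all $t$, obtains bounds on $\|\varphi_t\|_{L^2}$, $\|\psi_t\|_{L^2}$ and $\int_0^t\left(\|\varphi_{x\tau}\|_{L^2}^2+\|\psi_{x\tau}\|_{L^2}^2\right)\mathrm{d}\tau$, and then recovers $\|\varphi_{xx}\|_{L^2}$, $\|\psi_{xx}\|_{L^2}$ and the $H^3$-in-time-integral bounds \emph{algebraically} from the equations. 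If you replace your step (testing with fourth derivatives) by this elliptic-recovery-from-time-regularity argument, the rest of your outline goes through; as written, the higher-order step does not close. A minor further over-claim: the inequality $\frac{\mathrm{d}}{\mathrm{d}t}\mathcal E+c\mathcal E\le C\mathcal N$ (hence exponential decay of the full $H^2$-energy) is not established by either route and is not needed --- boundedness of $\mathcal E$ together with decay of $\mathcal N$ already yields \eqref{decay-in-thm} by interpolation, which is exactly how the paper concludes.
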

~\\
When $ D=0 $, \eqref{1-D-model} becomes a PDE-ODE system which has a unique constant steady state $ (M,0) $ with $ M= \int _{\mathcal{I}}u _{0}\mathrm{d}x $ satisfying the boundary condition \eqref{bdcon-D-0}. Then we have our second result below.
\begin{theorem}\label{thm-stabiliy-D0}
Let $ (u _{0}, v _{0})\in H ^{1} \times H ^{2} $ with $ u _{0} \geq 0 $, $ v _{0} \geq 0 $ such that $ \int _{\mathcal{I} }u _{0} \mathrm{d}x =M$ and define
\begin{gather*}
\displaystyle w _{0}(x)= \int _{0}^{x}(u _{0}(y)-M) \mathrm{d}y.
\end{gather*}
Then there exists a constant $ \delta _{1}>0 $ such that if
\begin{gather*}
\displaystyle  \|w _{0}\|_{H ^{1}}^{2}+\|v _{0}\|_{H ^{1}}^{2}\leq \delta _{1},
\end{gather*}
then the initial-boundary value problem \eqref{1-D-model}, \eqref{bdcon-D-0} admits a unique solution $ (u,v) $ in $ \mathcal{I}\times (0,\infty) $ satisfying
\begin{gather*}
\displaystyle  u \in C([0,\infty); H ^{1})\cap L ^{2}(0,\infty;H ^{2}),\ \ v \in C([0,\infty);H ^{2}).
\end{gather*}
Furthermore, we have the following decay estimates:
\begin{gather}\label{decay-in-thm-D0}
\displaystyle \|(u- M, v)(\cdot,t)\|_{L ^{\infty}} \leq C 		{\mathop{\mathrm{e}}}^{-\alpha _{0} t}  \ \ \mbox{for any } t >0,
\end{gather}
where $ \alpha _{0} $ and $ C>0 $ are positive constants independent of $ t $.
\end{theorem}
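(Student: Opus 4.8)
The plan is to carry the anti-derivative method of the introduction over to the PDE--ODE system. I first introduce the primitive $w(x,t)=\int_{0}^{x}\bigl(u(y,t)-M\bigr)\,dy$. The zero-flux condition \eqref{bdcon-D-0} forces the mass identity $\int_{\mathcal I}u\,dx=M$, whence $w(0,t)=w(1,t)=0$; integrating the first equation of \eqref{1-D-model} in $x$ and using the flux condition shows that $(w,v)$ solves
\begin{equation*}
w_{t}=w_{xx}-(w_{x}+M)v_{x},\qquad v_{t}=-(w_{x}+M)\,v,\qquad w|_{x=0,1}=0,
\end{equation*}
with $u=w_{x}+M\ge 0$. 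The cross-diffusion has disappeared, the Dirichlet datum for $w$ is now homogeneous and hence usable in integration by parts and the Poincar\'e inequality, and the second equation integrates explicitly, $v(x,t)=v_{0}(x)\exp\bigl(-\int_{0}^{t}u(x,s)\,ds\bigr)$, which keeps $v\ge 0$ and gives $\|v(t)\|_{L^{\infty}}\le\|v_{0}\|_{L^{\infty}}$. Local existence and uniqueness in the classes $u\in C([0,T];H^{1})\cap L^{2}(0,T;H^{2})$, $v\in C([0,T];H^{2})$, with $u\ge 0$ and $\int_{\mathcal I}u=M$, follow from a standard contraction argument (handling $v$ through its explicit formula); denote by $[0,T_{\max})$ the maximal interval.

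The bulk of the work is a continuity/bootstrap argument producing exponential decay. The ODE supplies the dissipation for $v$: once $\|u-M\|_{L^{\infty}}<M/2$ one has $u\ge M/2$, and testing $v_{t}=-uv$, $v_{xt}=-u_{x}v-uv_{x}$, $v_{xxt}=-u_{xx}v-2u_{x}v_{x}-uv_{xx}$ against $v$, $v_{x}$, $v_{xx}$ respectively gives, after using $u\ge M/2$ and Young's inequality, estimates of the form
\begin{equation*}
\frac{d}{dt}\|v_{x}\|_{L^{2}}^{2}+c\,\|v_{x}\|_{L^{2}}^{2}\le C\bigl(\|v\|_{L^{\infty}}+\|v_{x}\|_{L^{\infty}}\bigr)\bigl(\|u_{x}\|_{L^{2}}^{2}+\|u_{xx}\|_{L^{2}}^{2}+\|v_{x}\|_{L^{2}}^{2}\bigr)
\end{equation*}
and analogously for $\|v\|_{L^{2}}^{2}$ and $\|v_{xx}\|_{L^{2}}^{2}$; combined with the exponential decay $\|v(t)\|_{L^{\infty}}\lesssim e^{-ct}$ and the $L^{2}_{t}$-bounds on $u$ obtained below, Gronwall's inequality yields the exponential decay of $\|v(t)\|_{H^{2}}$. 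This is exactly where, as the authors emphasize, the ODE structure and the explicit solution formula for $v$ are essential, and it is also why $v_{0}$ must be small in $H^{1}$ rather than only in $L^{2}$: the $v$-equation carries no regularizing term that could compensate a large $\|v_{0,x}\|_{L^{\infty}}$.

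For the $(w,u)$-part I run a sequence of energy estimates. Testing the $w$-equation against $w$, using $w|_{0,1}=0$ and Poincar\'e, and absorbing the cross terms (each carrying a factor of $v$, $v_{x}$ or $v_{xx}$) gives $\frac{d}{dt}\|w\|_{L^{2}}^{2}+c\|w\|_{H^{1}}^{2}\le Ce^{-ct}$. Testing the $u$-equation against $u-M$ is where the zero-flux condition pays off: the boundary term $\bigl[(u-M)(u_{x}-uv_{x})\bigr]_{0}^{1}$ vanishes by \eqref{bdcon-D-0}, and since $\int_{\mathcal I}(u-M)=0$ the Poincar\'e--Wirtinger inequality applies, giving $\frac{d}{dt}\|u-M\|_{L^{2}}^{2}+c\|u-M\|_{L^{2}}^{2}+\|u_{x}\|_{L^{2}}^{2}\le C\|u\|_{L^{\infty}}^{2}\|v_{x}\|_{L^{2}}^{2}\le Ce^{-ct}$. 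Testing the $w$-equation against $-w_{xx}$ (the boundary term drops again because $w_{t}=0$ on $\partial\mathcal I$) controls $\|u_{x}\|_{L^{2}}=\|w_{xx}\|_{L^{2}}$, the only dangerous term $\int w_{xx}^{2}v_{x}$ being handled in two stages — by Gronwall with data-dependent constants on a bounded initial interval, and by absorption into the dissipation once $\|v_{x}\|_{L^{\infty}}$ has fallen below a threshold. Finally, testing the $u$-equation against $u_{t}$ (again $\bigl[u_{t}(u_{x}-uv_{x})\bigr]_{0}^{1}=0$) bounds $\|u_{t}\|_{L^{2}_{t}}$, and reading $u_{xx}=u_{t}+(uv_{x})_{x}$ off the equation yields $u\in L^{2}(0,\infty;H^{2})$. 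A suitable linear combination of these estimates forms a coupled energy $\mathcal E(t)\sim\|w\|_{H^{1}}^{2}+\|u-M\|_{L^{2}}^{2}+\|v\|_{H^{1}}^{2}$ satisfying $\frac{d}{dt}\mathcal E+c\mathcal E\le Ce^{-ct}$; Gronwall closes the bootstrap, forces $T_{\max}=\infty$, and gives exponential decay of $\mathcal E$. The $L^{\infty}$ decay \eqref{decay-in-thm-D0} then follows from $\|u-M\|_{L^{\infty}}\lesssim\|u-M\|_{L^{2}}^{1/2}\|u_{x}\|_{L^{2}}^{1/2}$ together with the already-established decay of $\|v(t)\|_{H^{2}}$.

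I expect the main obstacle to be precisely the interplay just hinted at: the $v$-equation has no smoothing, while the data is assumed small only in $\|w_{0}\|_{H^{1}}^{2}+\|v_{0}\|_{H^{1}}^{2}$, so that $\|u_{0,x}\|_{L^{2}}$ and $\|v_{0,xx}\|_{L^{2}}$ — which govern the top-order terms and the drift coefficient $v_{x}$ — may be of order one. One therefore has to separate an initial time layer, on which the higher-order norms are merely bounded with constants depending on the data, from the long-time regime, on which parabolic smoothing of $u$ and the exponential decay furnished by the $v$-ODE turn the nonlinear coupling into a genuine perturbation, and then splice the two together. Keeping careful track of which quantities must be small and which need only be bounded, and disposing of the boundary terms arising in the higher-order estimates, is the delicate bookkeeping at the heart of the proof; it is also what makes the $D=0$ analysis require stronger smallness than the $D>0$ case treated in Theorem \ref{thm-asymtoptic}.
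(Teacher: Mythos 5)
Your proposal follows essentially the same route as the paper: pass to the primitive $w=\int_0^x(u-M)$ to remove the cross-diffusion and obtain homogeneous Dirichlet data, exploit the explicit ODE solution $v=v_0\exp\bigl(-\int_0^t u\bigr)$ together with the lower bound $u\ge M/2$ (enforced by a smallness bootstrap on $\|w\|_{H^1}+\|v\|_{H^1}$ with higher norms only bounded) to extract exponential dissipation for $v$ and its derivatives, run parabolic energy estimates for $w$, and close with Gronwall and Gagliardo--Nirenberg. The only differences are cosmetic choices of multipliers (you test the $u$-equation against $u-M$ and $u_t$ where the paper tests the $w$-equation against $w_t$ and its time derivative against $w_t$, and you splice an initial layer with a long-time regime where the paper uses Gronwall with integrable coefficients), which lead to the same estimates.
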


\vspace{5mm}
\section{Asymptotic stability for the case \texorpdfstring{$ D>0 $}{D>0}} 
\label{Asymptiotic-stability}
In this section, we will study the asymptotic stability of the steady state of \eqref{1-D-model}, \eqref{bdcon-D-0} for $ D>0 $ by the method of energy estimates. Before proceeding, we present an well-known inequality that will be frequently used in the sequel. 
\begin{lemma}[cf. \cite{Nirenberg-1966-inequaltiy}]
For any $ f \in H ^{1}(\mathcal{I})$, there exists a constant $ c _{1}>0 $ such that
\begin{gather}\label{sobole}
\displaystyle \|f\|_{L ^{\infty}} \leq c_{1}\Big( \|f\|_{L ^{2}}^{\frac{1}{2}}\|f _{x}\|_{L ^{2}}^{\frac{1}{2}}+\|f\|_{L ^{2}} \Big) .
\end{gather}
Furthermore, if $ f \in H _{0}^{1}(\mathcal{I}) $, then it holds that
\begin{gather}\label{sobolve-l-inty-zero}
\displaystyle \|f\|_{L ^{\infty}} \leq c_{2} \|f\|_{L ^{2}}^{\frac{1}{2}}\|f _{x}\|_{L ^{2}}^{\frac{1}{2}} \ \ \ \mbox{and}\ \ \ \|f\|_{L ^{\infty}}\leq c _{3}\|f _{x}\|_{L ^{2}}
\end{gather}
for some constants $ c _{2}, c _{3}>0 $.
\end{lemma}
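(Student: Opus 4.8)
The plan is to prove all three inequalities by the fundamental theorem of calculus, exploiting that the interval $\mathcal{I}=(0,1)$ has finite length one. By a standard density argument it suffices to establish the pointwise identities for $f\in C^{1}(\bar{\mathcal{I}})$ (and, for the $H_{0}^{1}$ estimates, for $f\in C^{1}(\bar{\mathcal{I}})$ with $f(0)=f(1)=0$), the general case following by approximating $f$ in the $H^{1}$-norm and using that both sides of each inequality are continuous along such approximating sequences.

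First I would treat the $H_{0}^{1}$ case, which is cleanest. If $f\in H_{0}^{1}(\mathcal{I})$, then for every $x\in\bar{\mathcal{I}}$ one has $f(x)^{2}=\int_{0}^{x}2f(y)f_{x}(y)\,\mathrm{d}y$, so by the Cauchy--Schwarz inequality $f(x)^{2}\le 2\|f\|_{L^{2}}\|f_{x}\|_{L^{2}}$; taking the supremum over $x$ and then a square root yields the first estimate in \eqref{sobolve-l-inty-zero} with $c_{2}=\sqrt{2}$. Likewise $f(x)=\int_{0}^{x}f_{x}(y)\,\mathrm{d}y$ gives $|f(x)|\le\|f_{x}\|_{L^{1}(\mathcal{I})}\le\|f_{x}\|_{L^{2}(\mathcal{I})}$, where the last step is Cauchy--Schwarz on an interval of length one; this is the second estimate in \eqref{sobolve-l-inty-zero} with $c_{3}=1$.

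Next, for the general $H^{1}$ estimate \eqref{sobole}, the only additional ingredient is control of the ``constant part'' of $f$. By the mean value theorem for integrals there exists $x_{0}\in\bar{\mathcal{I}}$ with $f(x_{0})^{2}=\int_{\mathcal{I}}f^{2}\,\mathrm{d}x=\|f\|_{L^{2}}^{2}$ (using $|\mathcal{I}|=1$). Then for any $x\in\bar{\mathcal{I}}$, $f(x)^{2}=f(x_{0})^{2}+\int_{x_{0}}^{x}2f f_{x}\,\mathrm{d}y\le\|f\|_{L^{2}}^{2}+2\|f\|_{L^{2}}\|f_{x}\|_{L^{2}}$, and taking a square root together with the elementary bound $\sqrt{a+b}\le\sqrt{a}+\sqrt{b}$ for $a,b\ge 0$ gives \eqref{sobole} with $c_{1}=\sqrt{2}$.

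There is no genuinely hard step here: this is the one-dimensional Gagliardo--Nirenberg inequality on a bounded interval. The only point requiring a little care is the density/approximation step that extends the pointwise identities from $C^{1}$ functions to general $H^{1}$ (resp.\ $H_{0}^{1}$) functions, which is routine since $C^{\infty}(\bar{\mathcal{I}})$ is dense in $H^{1}(\mathcal{I})$ and $C_{c}^{\infty}(\mathcal{I})$ is dense in $H_{0}^{1}(\mathcal{I})$, and each norm appearing in \eqref{sobole}--\eqref{sobolve-l-inty-zero} is continuous along such sequences; alternatively one may simply invoke \cite{Nirenberg-1966-inequaltiy}.
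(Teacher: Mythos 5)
Your proof is correct and complete. The paper itself offers no proof of this lemma --- it is stated with only the citation to Nirenberg's interpolation inequality --- so your elementary, self-contained argument via the fundamental theorem of calculus is a perfectly good (and arguably preferable) substitute: the identity $f(x)^{2}=f(x_{0})^{2}+\int_{x_{0}}^{x}2ff_{y}\,\mathrm{d}y$ combined with Cauchy--Schwarz and the choice of $x_{0}$ by the mean value theorem for integrals (legitimate since $H^{1}(\mathcal{I})$ embeds into $C(\bar{\mathcal{I}})$ in one dimension, or after the density reduction to $C^{1}$) gives exactly \eqref{sobole} with explicit constants, and the two estimates in \eqref{sobolve-l-inty-zero} follow even more directly from $f(0)=0$. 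The only cosmetic remark is that in one dimension every $H^{1}$ function has an absolutely continuous representative, so the pointwise identities hold directly and the density step, while harmless, is not strictly needed.
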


\subsection{\emph{A priori} estimates} 
\label{sub:em}
First of all, integrating the first equation in \eqref{stat-problem}, we see that the stationary solution $ (\bar{u},\bar{v}) $ satisfies
\begin{gather}\label{stat-refor}
\displaystyle \begin{cases}
\displaystyle	\bar{u} _{x}-\bar{u}\bar{v}_{x}=0,\\
	\displaystyle D \bar{v}_{xx}-\bar{u}\bar{v}=0,\\
	\displaystyle \bar{v}(0)=\bar{v}(1)=v _{\ast},
\end{cases}
\end{gather}
with $ \int _{\mathcal{I}} \bar{u} \mathrm{d}x=M $. In view of the zero-flux boundary condition in \eqref{bd-con-Dnon-0} for $ u$, we know that the mass of the bacteria is conserved for all time. This along with the fact $ \int _{\mathcal{I}}u _{0}\mathrm{d}x=\int _{\mathcal{I}}\bar{u}\mathrm{d}x =M$ implies that
\begin{gather*}
\displaystyle \int _{\mathcal{I}}(u(x,t)-\bar{u}(x))\mathrm{d}x=0
\end{gather*}
for any $ t \geq 0 $. Define
\begin{gather*}
\displaystyle \varphi(x,t)= \int _{0}^{x}(u(y,t)-\bar{u}(y))\mathrm{d}y,\ \ \psi=v- \bar{v},
\end{gather*}
that is
\begin{gather}\label{u-v-vfipsi}
\displaystyle  u= \varphi _{x}+\bar{u},\ \ \ v=\psi+ \bar{v}.
\end{gather}
Substituting \eqref{u-v-vfipsi} into \eqref{1-D-model}, integrating the first equation with respect to $ x $ and using \eqref{stat-refor}, we obtain the following perturbation equations:
\begin{gather}\label{vfi-psi-eq}
\displaystyle \begin{cases}
	\displaystyle \varphi _{t}= \varphi _{xx}-  \varphi _{x}\bar{v}_{x}-\bar{u}\psi _{x}- \varphi _{x}\psi _{x},\\[1mm]
  \displaystyle \psi _{t}=D \psi _{xx}-\bar{u}\psi- \bar{v}\varphi _{x} -  \varphi _{x} \psi,
\end{cases}
\end{gather}
with the initial datum
\begin{gather}
\displaystyle  (\varphi,\psi)(x,0)=(\varphi _{0},\psi _{0})=\left( \int _{0}^{x}(u _{0}(y)-\bar{u}(y))\mathrm{d}y,\, v _{0}- \bar{v}  \right)
\end{gather}
and the boundary conditions
\begin{gather}\label{bd-con-refor}
 \displaystyle(\varphi,\psi)(0,t)=(\varphi,\psi)(1,t)=0.
 \end{gather}
 By the standard fixed point theorems (cf. \cite{nishida-1978,zhaokun-2015-JDE}), one can prove the local existence of solutions to the initial-boundary value problem \eqref{vfi-psi-eq}--\eqref{bd-con-refor}. Precisely, for any $ T>0 $, if we define
\begin{align*}
\displaystyle X(0,T):=\{ (\varphi,\psi)\vert\, &\varphi \in C([0,T];H _{0}^{1}\cap H ^{2}) \cap L ^{2}(0,T;H ^{3}),\\
 & \psi \in C([0,T]; H _{0}^{1}\cap H ^{2})\cap\in L ^{2}(0,T;H ^{3})\}
\end{align*}
and denote
\begin{gather*}
 \displaystyle N(T):=\sup _{0 \leq t \leq T}\left( \|\varphi\|_{H ^{2}}^{2} +\|\psi\|_{H ^{2}}^{2}\right),
 \end{gather*}
 then we have the following local existence result.
\begin{proposition}[Local existence]\label{prop-local}
Let $\varphi _{0}\in H _{0}^{1}\cap H ^{2} $ and $ \psi _{0} \in H _{0}^{1}\cap H ^{2} $ such that
\begin{gather*}
\displaystyle \varphi _{0x}+\bar{u} \geq 0,\ \ \psi _{0}+\bar{v}\geq 0
\end{gather*}
for any $ x \in \mathcal{I} $. Then there exists a positive constant $ T _{0} $ depending on $ N(0) $, $ \bar{u} $ and $ \bar{v} $ such that the initial-boundary value problem \eqref{vfi-psi-eq}--\eqref{bd-con-refor} admits a unique solution $ (\varphi,\psi)\in X(0,T _{0}) $ satisfying $ N(T _{0}) \leq 2 N(0) $ and
\begin{gather*}
\displaystyle \varphi _{x}+\bar{u} \geq 0,\ \ \ \psi +\bar{v} \geq 0
\end{gather*}
for any $ (x,t)\in \mathcal{I}\times[0,T _{0}) $.
\end{proposition}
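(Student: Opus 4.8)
The plan is to run a standard linearization-and-fixed-point argument for the parabolic system \eqref{vfi-psi-eq}--\eqref{bd-con-refor}, exploiting that for $D>0$ both equations are uniformly parabolic and that the steady-state coefficients $\bar u,\bar v,\bar v_x$ are bounded and smooth on $\bar{\mathcal{I}}$ by Proposition \ref{prop-stat}. Starting from $(\varphi^{0},\psi^{0})\equiv(\varphi_0,\psi_0)$, I would define $(\varphi^{n+1},\psi^{n+1})$ as the solution of the \emph{linear} decoupled problem obtained by freezing the variable coefficients and the coupling/nonlinear terms at the previous iterate,
\begin{gather*}
\varphi^{n+1}_t=\varphi^{n+1}_{xx}-\varphi^{n+1}_x\bar v_x-\bar u\psi^{n}_x-\varphi^{n}_x\psi^{n}_x,\qquad
\psi^{n+1}_t=D\psi^{n+1}_{xx}-\bar u\psi^{n+1}-\bar v\varphi^{n}_x-\varphi^{n}_x\psi^{n},
\end{gather*}
with the homogeneous Dirichlet data \eqref{bd-con-refor} and initial data $(\varphi_0,\psi_0)$; each such linear problem has a unique solution in $X(0,T)$ by classical linear parabolic theory.

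Next I would establish, for $T>0$ small, a uniform-in-$n$ bound $N^{n}(T):=\sup_{0\leq t\leq T}(\|\varphi^{n}\|_{H^2}^2+\|\psi^{n}\|_{H^2}^2)\leq 2N(0)$ together with a uniform bound on $\|\varphi^{n}\|_{L^2(0,T;H^3)}+\|\psi^{n}\|_{L^2(0,T;H^3)}$ by energy estimates. The $L^2$-level estimate comes from multiplying the equations by $\varphi^{n+1}$ and $\psi^{n+1}$: thanks to the Dirichlet conditions there are no boundary contributions, the first-order coupling terms $\bar u\psi^{n}_x$ and $\bar v\varphi^{n}_x$ are absorbed into the dissipation $\|\varphi^{n+1}_x\|_{L^2}^2+\|\psi^{n+1}_x\|_{L^2}^2$ via Young's inequality, and the quadratic terms are controlled through \eqref{sobole} and the bound on $N^{n}$. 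To reach $H^2$ and $H^3$ one cannot differentiate in $x$ directly, since the boundary data sits on $\varphi,\psi$ rather than on their derivatives; instead I would differentiate the equations in $t$ — legitimate because $\varphi^{n+1}_t=\psi^{n+1}_t=0$ on $\partial\mathcal{I}$ — estimate $\|\varphi^{n+1}_t\|_{L^2}^2+\|\psi^{n+1}_t\|_{L^2}^2$ and $\|\varphi^{n+1}_t\|_{L^2(0,T;H^1)}^2+\|\psi^{n+1}_t\|_{L^2(0,T;H^1)}^2$, and then use elliptic regularity on $\varphi^{n+1}_{xx}=\varphi^{n+1}_t+(\text{lower order})$, $D\psi^{n+1}_{xx}=\psi^{n+1}_t+(\text{lower order})$ to upgrade these into the desired $H^2$ and $H^3$ bounds. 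A Gronwall argument together with the smallness of $T$ then closes the uniform bound.

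With the uniform bound in hand, I would show that the iteration map $(\varphi^{n},\psi^{n})\mapsto(\varphi^{n+1},\psi^{n+1})$ is a contraction in the weaker norm $C([0,T];L^2)\cap L^2(0,T;H^1)$ for $T$ small: subtracting the equations for two consecutive iterates and running the same $L^2$-energy estimate on the differences, one bounds the right-hand side by the uniform $X$-bound times a positive power of $T$. Hence $(\varphi^{n},\psi^{n})$ converges in this weaker space, and interpolating with the uniform $X(0,T)$-bound upgrades the convergence (in particular of $\varphi^{n}_x,\psi^{n}_x$) enough to pass to the limit in all nonlinear terms, so the limit $(\varphi,\psi)\in X(0,T_0)$ solves \eqref{vfi-psi-eq}--\eqref{bd-con-refor} and satisfies $N(T_0)\leq 2N(0)$ by lower semicontinuity of the norms; uniqueness follows from the same difference estimate.

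Finally, for positivity, differentiating the $\varphi$-equation in $x$ and using $\bar u_x=\bar u\bar v_x$ from \eqref{stat-refor} one checks that $u:=\varphi_x+\bar u$ and $v:=\psi+\bar v$ solve exactly \eqref{1-D-model}, and moreover $u_x-uv_x=\varphi_t$, which vanishes at $x=0,1$ since $\varphi(0,t)=\varphi(1,t)=0$; thus $u$ satisfies the zero-flux condition in \eqref{bd-con-Dnon-0} with $u|_{t=0}=u_0\geq0$, so the weak maximum principle applied to the divergence-form equation $u_t=u_{xx}-(uv_x)_x$ (using that $v$ is smooth in $\mathcal{I}\times(0,T_0)$ by parabolic regularity, or, equivalently, an energy estimate for $\min\{u,0\}$) gives $u\geq0$. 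Then $v$ solves $v_t-Dv_{xx}+uv=0$ with $v|_{\partial\mathcal{I}}=v_\ast>0$, $v|_{t=0}=v_0\geq0$ and $u\geq0$, so the maximum principle yields $v\geq0$; that is, $\varphi_x+\bar u\geq0$ and $\psi+\bar v\geq0$ on $\mathcal{I}\times[0,T_0)$. I expect the main obstacle to be the higher-order energy estimate: the Dirichlet boundary condition forbids differentiating in space, which forces the detour through time derivatives plus elliptic regularity, and one must take care that the first-order cross-coupling $(\bar u\psi_x,\bar v\varphi_x)$ — the remnant of the original cross-diffusion — is balanced against the parabolic dissipation rather than against any (absent) lower-order dissipation.
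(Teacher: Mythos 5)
Your proposal is correct and is essentially the approach the paper itself takes: the paper does not write out a proof of Proposition \ref{prop-local} but simply invokes ``standard fixed point theorems'' (citing \cite{nishida-1978,zhaokun-2015-JDE}), and your linearized iteration with uniform $X(0,T)$ bounds obtained by time-differentiation plus elliptic regularity, contraction in a weaker norm, and maximum-principle/energy arguments for the positivity of $u=\varphi_x+\bar u$ and $v=\psi+\bar v$ is precisely that standard argument spelled out. The only cosmetic imprecision is that the linear coupling terms $\bar u\tilde\psi^n_x$, $\bar v\tilde\varphi^n_x$ in the contraction estimate are absorbed into the dissipation with a fixed factor less than one (after weighting the two equations) rather than gaining a power of $T$, but this does not affect the conclusion.
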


In order to study the asymptotic behavior of solutions to the problem \eqref{1-D-model}, \eqref{bd-con-Dnon-0}, we first establish the global existence result for the initial-boundary value problem \eqref{vfi-psi-eq}--\eqref{bd-con-refor}.
\begin{proposition}\label{prop-global-refor}
Assume the conditions of Proposition \ref{prop-local} hold. Then there exists a positive constant $ \delta _{1} $, such that if $ \|\varphi _{0}\|_{H ^{1}}^{2}+\|\psi _{0}\|_{L ^{2}}^{2}\leq \delta _{1} $, then the problem \eqref{vfi-psi-eq}--\eqref{bd-con-refor} admits a unique global solution $ (\varphi, \psi)\in X(0,\infty) $ which satisfies that for all $ t \geq 0 $,
\begin{gather}\label{pro-global-eti1}
\displaystyle  \|\varphi(\cdot,t)\|_{H ^{1}}^{2}+\|\psi(\cdot,t)\|_{L ^{2}}^{2} \leq C 		{\mathop{\mathrm{e}}}^{- \alpha _{1} t},\ \ \|\varphi _{xx}(\cdot,t)\|_{L ^{2}}^{2} + \|\psi _{x}(\cdot,t)\|_{H ^{1}}^{2} \leq C
\end{gather}
for some constants $ \alpha _{1}>0 $ and $ C>0 $ independent of $ t $.
\end{proposition}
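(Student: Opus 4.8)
The plan is to prove Proposition \ref{prop-global-refor} by the standard continuation argument: Proposition \ref{prop-local} furnishes a unique local solution in $X(0,T_{0})$, so it suffices to establish an \emph{a priori} estimate guaranteeing that $N(T)$ cannot reach a prescribed small size on any finite interval, whence the solution extends to $[0,\infty)$ with the stated decay. Throughout I would fix a small $\varepsilon>0$, assume $N(T)\le\varepsilon$ on $[0,T]$, and use freely that the steady state satisfies $0<\underline{u}\le\bar u\le\overline{u}$, $0<\underline{v}\le\bar v\le v_{\ast}$ on $\bar{\mathcal I}$ together with the two structural identities $\bar u_{x}=\bar u\bar v_{x}$ and $D\bar v_{xx}=\bar u\bar v$ recorded in \eqref{stat-refor}. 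The homogeneous Dirichlet data $\varphi|_{x=0,1}=\psi|_{x=0,1}=0$ from \eqref{bd-con-refor} (and, equivalently, the vanishing of $u_{x}-uv_{x}$ on the boundary) are used repeatedly to annihilate boundary terms in every integration by parts and to invoke \eqref{sobolve-l-inty-zero}.

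The first and central step is a \emph{weighted} basic energy estimate: test the $\varphi$-equation in \eqref{vfi-psi-eq} against $\varphi/\bar u\ (\propto e^{-\bar v}\varphi)$ and the $\psi$-equation against $\psi/\bar v$, then add. Because of $\bar u_{x}=\bar u\bar v_{x}$, the first two terms of the $\varphi$-equation reassemble into $\bar u(\varphi_{x}/\bar u)_{x}$, so that after one integration by parts the dissipation $\int\varphi_{x}^{2}/\bar u$ appears with no leftover first-order term; because of $D\bar v_{xx}=\bar u\bar v$, the weight-derivative term generated by testing $D\psi_{xx}$ against $\psi/\bar v$ turns, after a further integration by parts, into a zeroth-order damping proportional to $\int\bar u\psi^{2}/\bar v$ plus a remainder controlled by the steady-state structure; and — the crucial point — the two linear cross-diffusion couplings $-\int\psi_{x}\varphi$ and $-\int\varphi_{x}\psi$ cancel exactly after integrating one of them by parts, which is precisely why the weights $1/\bar u$ and $1/\bar v$ are the right ones. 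The cubic remainders $\int\varphi\varphi_{x}\psi_{x}/\bar u$ and $\int\varphi_{x}\psi^{2}/\bar v$ are absorbed into the dissipation via \eqref{sobole}, \eqref{sobolve-l-inty-zero} and $N(T)\le\varepsilon$. This produces a differential inequality of the form
\[
\frac{d}{dt}\int_{\mathcal I}\Big(\frac{\varphi^{2}}{\bar u}+\frac{\psi^{2}}{\bar v}\Big)\mathrm{d}x+c\Big(\|\varphi_{x}\|_{L^{2}}^{2}+\|\psi\|_{L^{2}}^{2}+D\|\psi_{x}\|_{L^{2}}^{2}\Big)\le 0 ,
\]
and since $\|\varphi\|_{L^{2}}\le C\|\varphi_{x}\|_{L^{2}}$ (Poincar\'e, as $\varphi\in H^{1}_{0}$), Gr\"onwall gives the exponential decay of $\|\varphi\|_{L^{2}}^{2}+\|\psi\|_{L^{2}}^{2}$ together with $\int_{0}^{\infty}e^{\alpha t}(\|\varphi_{x}\|_{L^{2}}^{2}+\|\psi\|_{L^{2}}^{2})\,\mathrm{d}t<\infty$.

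Next come the higher-order estimates. Testing the $\varphi$-equation against $-\varphi_{xx}$ (legitimate since $\varphi_{t}|_{x=0,1}=0$) yields $\tfrac12\tfrac{d}{dt}\|\varphi_{x}\|_{L^{2}}^{2}+\|\varphi_{xx}\|_{L^{2}}^{2}$ estimated by $\|\varphi_{x}\|_{L^{2}}^{2}$, $\|\psi_{x}\|_{L^{2}}^{2}$ and cubic remainders; testing the $\psi$-equation against $-\psi_{xx}$ yields $\tfrac12\tfrac{d}{dt}\|\psi_{x}\|_{L^{2}}^{2}+D\|\psi_{xx}\|_{L^{2}}^{2}+\underline{u}\|\psi_{x}\|_{L^{2}}^{2}$ estimated by $\|\psi\|_{L^{2}}^{2}$, $\|\varphi_{x}\|_{L^{2}}^{2}$, $\|\varphi_{xx}\|_{L^{2}}^{2}$ and cubic remainders (here $\int\bar v\varphi_{x}\psi_{xx}$ is integrated by parts, so the leading-order part is absorbed by $D\|\psi_{xx}\|_{L^{2}}^{2}$ at the cost of a $D$-dependent constant). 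Forming a suitable linear combination of these two inequalities, absorbing $\|\varphi_{x}\|_{L^{2}}^{2}$ into $\|\varphi_{xx}\|_{L^{2}}^{2}$ (using $\|\varphi_{x}\|_{L^{2}}^{2}\le\|\varphi\|_{L^{2}}\|\varphi_{xx}\|_{L^{2}}$), and feeding in the Step-1 decay, one closes a Gr\"onwall inequality that propagates the exponential decay to $\|\varphi_{x}\|_{L^{2}}^{2}$ and keeps $\|\psi_{x}\|_{L^{2}}^{2}$ uniformly bounded. The remaining bounds $\|\varphi_{xx}\|_{L^{2}}^{2}+\|\psi_{xx}\|_{L^{2}}^{2}\le C$ then follow either by a time-differentiated energy estimate (reading $\varphi_{xx},\psi_{xx}$ off the equations from $\|\varphi_{t}\|_{L^{2}},\|\psi_{t}\|_{L^{2}}$) or by differentiating \eqref{vfi-psi-eq} once more in $x$, in each case using $N(T)\le\varepsilon$ to bury the nonlinear terms. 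Collecting everything gives $N(T)\le C\big(N(0)+\varepsilon^{1/2}N(T)\big)$, hence $N(T)\le 2CN(0)$ once $\varepsilon$ is small, which closes the continuation and yields the asserted solution and estimates.

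The main obstacle — and the reason this problem was open — is that the Dirichlet condition on $v$ provides no control of $v_{x}$, while $v_{x}$ (equivalently $\psi_{x}$) is exactly the quantity entering the $\varphi$-equation through the cross term $-\bar u\psi_{x}$. The antiderivative reformulation \eqref{vfi-psi-eq} is what makes this coupling benign: paired with the weight $1/\bar u$ it cancels against the term $-\bar v\varphi_{x}$ from the $\psi$-equation paired with $1/\bar v$, and the Dirichlet data now do useful work by killing all boundary terms. The secondary difficulty is that differentiating these weights introduces $\bar v_{x}$ and $\bar v_{xx}$, which are steep (of order $D^{-1/2}$ and $D^{-1}$) near the boundary layer when $D$ is small; these are controlled not by crude bounds but by repeated use of $D\bar v_{xx}=\bar u\bar v$ and $\bar u_{x}=\bar u\bar v_{x}$, which is why the constant $C$, the decay rate $\alpha_{1}$, and the smallness threshold $\delta_{1}$ are permitted to depend on $D$.
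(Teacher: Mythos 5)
Your proposal is correct and follows essentially the same route as the paper: the antiderivative reformulation, the weighted $L^{2}$ estimate with multipliers $\varphi/\bar u$ and $\psi/\bar v$ exploiting $\bar u_{x}=\bar u\bar v_{x}$ and $D\bar v_{xx}=\bar u\bar v$ (more precisely its consequence $D\bar v_{x}^{2}\le\bar v^{2}\bar u$ from Lemma \ref{lem-stat}) together with the exact cancellation of the two cross terms, followed by higher-order estimates and a continuation argument closing the \emph{a priori} assumption. The only real deviation is in the higher-order step, where the paper multiplies by $\varphi_{t}$, $\psi_{t}$ and then differentiates the system in $t$ (so the perturbations still vanish on the boundary), whereas you test against $-\varphi_{xx}$, $-\psi_{xx}$; both work for $D>0$, though your alternative of differentiating once more in $x$ would have to contend with nonvanishing boundary terms for $\varphi_{x}$, which is precisely why the paper prefers the time-differentiated version.
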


To ensure the global existence of solutions to the problem \eqref{vfi-psi-eq}--\eqref{bd-con-refor}, by the local existence result and the standard continuation argument, it suffices to prove the following \emph{a priori} estimates.
\begin{proposition}[\emph{A priori} estimates]\label{prop-aprioriesti}
For any $ T>0 $ and any solution  $ (\varphi,\psi)\in X(0,T) $ to the problem \eqref{vfi-psi-eq}--\eqref{bd-con-refor} with $ (\varphi _{0},\psi _{0})\in H ^{2} $, there exists a suitably small $ C _{0}>0 $ independent of $ T $ such that if $\|\varphi _{0}\|_{H ^{1}}^{2}+\|\psi _{0}\|_{L ^{2}}^{2}\leq C _{0}  $, then we have
\begin{align*}
\displaystyle  \|\varphi(\cdot,t)\|_{H ^{1}}^{2}+\|\psi(\cdot,t)\|_{L ^{2}}^{2} \leq  C		{\mathop{\mathrm{e}}}^{- \alpha _{1} t},\ \ \|\varphi _{xx}(\cdot,t)\|_{L ^{2}}^{2} + \|\psi _{x}(\cdot,t)\|_{H ^{1}}^{2} \leq C\ \ \mbox{in}\ \ [0,T]
\end{align*}
and
\begin{gather*}
\displaystyle \int _{0}^{t}\left( \|\varphi \|_{H ^{3}} ^{2}+\|\psi \|_{ H ^{3}}^{2}+\|\varphi _{\tau} \|_{H ^{1}} ^{2}+\|\psi _{\tau} \|_{ H ^{1}}^{2}  \right)  \mathrm{d}\tau \leq C \left( \|\varphi _{0}\|_{H ^{2}} ^{2}+\|\psi _{0}\|_{ H ^{2}}^{2} \right)  \ \ \mbox{in}\ \ [0,T],
\end{gather*}
where  $ \alpha _{1} $ and $ C $ are positive constants independent of $ T $.
\end{proposition}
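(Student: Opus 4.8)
The plan is to close the standard energy-method loop: assuming the a priori bound $\mathcal{N}(T):=\sup_{[0,T]}(\|\varphi\|_{H^2}^2+\|\psi\|_{H^2}^2)\leq \varepsilon$ for a small $\varepsilon$ to be chosen (which, combined with $\bar u\geq c_0>0$ and $0<\bar v\leq v_*$ from Proposition \ref{prop-stat}, makes $u=\varphi_x+\bar u$ bounded above and below and all nonlinear terms genuinely cubic or better), I will derive a chain of differential inequalities for weighted energy functionals and then integrate. Because the perturbation equations \eqref{vfi-psi-eq} for $(\varphi,\psi)$ carry no cross-diffusion and the Dirichlet conditions \eqref{bd-con-refor} hold, integration by parts produces no boundary terms, and the Poincar\'e inequality (third inequality in \eqref{sobolve-l-inty-zero}) converts $L^2$ norms of $\varphi,\psi$ into $L^2$ norms of their $x$-derivatives, which is the source of the exponential decay. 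The novelty relative to a textbook argument is the need for a weight: because $\bar u$ is non-constant with a boundary layer, the natural energy for the $\psi$-equation must be weighted by (a power of) $\bar u^{-1}$ or $e^{-\bar v}$ so that the "bad" term $-\bar v_x\varphi_x$ coupling $\varphi$ into $\psi$ can be absorbed; I would follow the relative-entropy-type weight suggested by the identity $\bar u=\mathrm{const}\cdot e^{\bar v}$.

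The steps, in order. (1) \emph{Basic $L^2$ estimate.} Multiply the $\varphi$-equation by $\varphi$ and the $\psi$-equation by a weight $\omega(x)\psi$, integrate over $\mathcal I$, add, and choose $\omega$ so that the cross terms $-\bar u\psi_x\varphi$ and $-\bar v\varphi_x\omega\psi$ combine with a sign or telescope; the dissipation is $\|\varphi_x\|_{L^2}^2+D\|\psi_x\|_{L^2}^2+c\|\psi\|_{L^2}^2$, the cubic terms $\varphi_x\psi_x\varphi$, $\varphi_x\psi\omega\psi$ are controlled by $\mathcal N(T)^{1/2}$ times the dissipation via \eqref{sobole}--\eqref{sobolve-l-inty-zero}. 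This yields $\frac{d}{dt}E_0+\kappa_0 E_0\leq C\mathcal N(T)^{1/2}(\text{dissipation})$, hence, for $\varepsilon$ small, $E_0(t)\leq E_0(0)e^{-\alpha_1 t}$ and $\int_0^t(\|\varphi_x\|^2+\|\psi_x\|^2+\|\psi\|^2)\,d\tau\lesssim E_0(0)$. (2) \emph{First-order estimate.} Multiply the $\varphi$-equation by $-\varphi_{xx}$ and differentiate the $\psi$-equation in $x$ and multiply by $\psi_x$ (legitimate since $\psi,\varphi$ vanish on $\partial\mathcal I$, so $\varphi_t,\psi_t$ also vanish there, killing the boundary terms); add, absorb the top-order coupling $\bar u\psi_{xx}\varphi_{xx}$ using the $D\|\psi_{xx}\|^2$ dissipation (when $D>0$) or, treating $\|\varphi_{xx}\|^2$ as the only genuine second-order dissipation, moving $\bar u\psi_{xx}\varphi_{xx}$ onto it by Cauchy-Schwarz after integrating by parts once more — this is where the weight again matters. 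One obtains $\frac{d}{dt}E_1+\kappa_1 E_1\leq C E_0$ modulo cubic terms, giving the exponential decay of $\|\varphi_x\|^2+\|\psi_x\|^2$ and the spacetime bound on $\|\varphi_{xx}\|^2+\|\psi_{xx}\|^2$. (3) \emph{Second-order / $L^2_tH^3$ estimate.} Multiply the $\varphi$-equation by $\varphi_{xxxx}$ (or differentiate twice and pair with $\varphi_{xx}$) and similarly for $\psi$; use the equations themselves to convert $\|\varphi_t\|_{H^1},\|\psi_t\|_{H^1}$ into spatial norms already controlled. This produces the last inequality of the proposition with the bound $C(\|\varphi_0\|_{H^2}^2+\|\psi_0\|_{H^2}^2)$, and upgrades $\|\varphi_{xx}\|^2,\|\psi_x\|_{H^1}^2$ to uniform-in-$t$ bounds (not decaying — consistent with the statement). (4) Collect: $\mathcal N(T)\leq C_1 E_0(0)+C_2(\|\varphi_0\|_{H^2}^2+\|\psi_0\|_{H^2}^2)$; choosing $C_0$ small forces $\mathcal N(T)\leq\varepsilon$, closing the bootstrap uniformly in $T$.

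The main obstacle is the top-order coupling between the two equations in the higher-order estimates: the term $\bar u\,\psi_x\varphi_x$ in the $\varphi$-equation and $\bar v\,\varphi_x\psi$ in the $\psi$-equation, after differentiation, generate cross terms of the form $\int \bar u\,\psi_{xx}\varphi_{xx}$ and $\int \bar v\,\varphi_{xx}\psi_{x}$ which are not small (no $\mathcal N(T)^{1/2}$ prefactor) and whose naive Cauchy-Schwarz splitting would require absorbing $\|\psi_{xx}\|^2$ by the wrong side of the inequality when $D$ is small or zero. The fix is structural: choose the weight in step (1) and the multipliers in step (2) so that these terms telescope against the steady-state identity $\bar u_x=\bar u\bar v_x$ (equivalently $\bar u=\mathrm{const}\cdot e^{\bar v}$), turning the would-be problematic integrals into sign-definite terms or into lower-order remainders times $\bar u_x,\bar u_{xx}$, which are merely bounded (though large, of order $D^{-1/2}$, as $D\to 0$ — harmless since $D$ is fixed here). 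Getting the weight and the order of the estimates right so that no circular absorption occurs is the delicate point; everything else is routine Sobolev interpolation in one space dimension.
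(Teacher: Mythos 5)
Your overall architecture (\emph{a priori} assumption, weighted low-order energy with cancelling cross terms, Poincar\'e inequality for the exponential rate, higher-order estimates, bootstrap) coincides with the paper's, but three of your concrete choices fail as written. In Step (1) you multiply the $\varphi$-equation by plain $\varphi$ and put a weight only on the $\psi$-equation. The drift term then yields
\begin{gather*}
-\int_{\mathcal I}\varphi_x\bar v_x\varphi\,\mathrm{d}x=\tfrac12\int_{\mathcal I}\bar v_{xx}\varphi^2\,\mathrm{d}x=\tfrac{1}{2D}\int_{\mathcal I}\bar u\bar v\,\varphi^2\,\mathrm{d}x,
\end{gather*}
a \emph{positive} term with coefficient of order $D^{-1}$ that cannot in general be absorbed by the dissipation $\|\varphi_x\|_{L^2}^2$. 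The paper's multiplier is $\varphi/\bar u$, for which this term cancels identically because $(1/\bar u)_x+\bar v_x/\bar u=0$ by \eqref{stat-refor}; with the companion weight $1/\bar v$ on the $\psi$-equation the linear cross terms reduce to $-\int_{\mathcal I}(\varphi\psi)_x\,\mathrm{d}x=0$. Moreover, retaining a damping term $c\|\psi\|_{L^2}^2$ after weighting the $\psi$-equation is not automatic: the weight derivative contributes $+\int_{\mathcal I}D(\bar v_x^2/\bar v^3)\psi^2\,\mathrm{d}x$ with the wrong sign, and one needs the pointwise inequality $D\bar v_x^2\le\bar v^2\bar u$ of Lemma \ref{lem-stat} (itself proved by integrating the steady equation against $\bar v_x$) to keep the sign. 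You assert the damping without this ingredient.

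Your higher-order multipliers generate boundary terms that do not vanish: differentiating the $\psi$-equation in $x$ and pairing with $\psi_x$ produces $D[\psi_{xx}\psi_x]_{0}^{1}$, and pairing the $\varphi$-equation with $\varphi_{xxxx}$ produces $[\varphi_{tx}\varphi_{xx}]_0^1$ after two integrations by parts. The Dirichlet conditions \eqref{bd-con-refor} kill $\varphi,\psi,\varphi_t,\psi_t$ at the endpoints but say nothing about $\psi_x,\psi_{xx},\varphi_{xx},\varphi_{tx}$ there, so your justification (``$\varphi_t,\psi_t$ vanish, killing the boundary terms'') does not apply to these manipulations. The paper avoids the issue by differentiating the system in $t$ --- which preserves the homogeneous Dirichlet data --- testing against $(\varphi_t,\psi_t)$, and only afterwards reading $\varphi_{xx},\psi_{xx}$ and the $L^2_tH^3$ bounds back from the equations. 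Finally, your bootstrap $\mathcal N(T)=\sup_{[0,T]}\|(\varphi,\psi)\|_{H^2}^2\le\varepsilon$ cannot be closed under the stated hypothesis: only $\|\varphi_0\|_{H^1}^2+\|\psi_0\|_{L^2}^2\le C_0$ is assumed small, while the $H^2$ norms of the data may be large, so already $\mathcal N(0)>\varepsilon$ is possible. The paper's assumption \eqref{aprori-assum} carries two parameters --- a small $\delta$ for $\|\varphi\|_{H^1}+\|\psi\|_{L^2}$ and a possibly large $\sigma$ for the second-order norms --- and every smallness requirement takes the form $\delta^{1/2}\sigma\ll1$, achievable by shrinking $\delta$ alone; your single-parameter version proves only a strictly weaker statement.
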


We shall prove Proposition \ref{prop-aprioriesti} by the argument of \emph{a priori }assumption. That is, we first assume that the solution $ (\varphi,\psi) $ to the problem \eqref{vfi-psi-eq}--\eqref{bd-con-refor} satisfy for any $ t \in [0,T] $,
\begin{align}\label{aprori-assum}
\displaystyle  \|\varphi(\cdot,t)\|_{H ^{1}}+\|\psi(\cdot,t)\|_{L ^{2}} \leq 2\delta,\ \ \|\varphi _{xx}(\cdot,t)\|_{L ^{2}} + \|\psi _{x}(\cdot,t)\|_{H ^{1}} \leq 2 \sigma\ \ \mbox{in}\ \ [0,T],
\end{align}
where $ 0<\delta<1 $ and $ M _{1} $ are positive constants to be determined later, and then derive the \emph{a priori} estimates with \eqref{apriori-ass-D-0} to ensure the global existence of solutions. Finally, we show that the solution exactly satisfies the \emph{a priori} assumption \eqref{apriori-ass-D-0} and close the argument. Before proceeding, we note that by \eqref{aprori-assum} along with \eqref{sobole}, \eqref{sobolve-l-inty-zero} and \eqref{bd-con-refor}, we get
\begin{gather}\label{small-implica}
\displaystyle \left\|\varphi\right\|_{L ^{\infty}}\leq C \delta , \ \ \|\psi\|_{L ^{\infty}}\leq C\delta ^{\frac{1}{2}}\sigma ^{\frac{1}{2}},  \ \ \|\varphi _{x}\|_{L ^{\infty}}\leq C \delta +C \delta ^{\frac{1}{2}}\sigma ^{\frac{1}{2}}
\end{gather}
for some constant $ C>0 $ independent of $ \delta $, $ \sigma$ and $ T $.

\vspace{2mm}

The following simple properties on the stationary solution are of importance in studying the asymptotic behavior of solutions.
\begin{lemma}\label{lem-stat}
Let $ (\bar{u},\bar{v}) $ be the stationary solution of \eqref{1-D-model}, \eqref{bd-con-Dnon-0} stated in Proposition \ref{prop-stat}. Then it holds that
\begin{gather}\label{bd-stat}
\displaystyle 0<C _{1}^{-1} \leq \bar{u},\, \bar{v} \leq C _{1}
\end{gather}
for some constant $ C _{1}>0 $, and that
\begin{gather}\label{stat-vx-bd}
\displaystyle D\bar{v}_{x}^{2} \leq\bar{v}^{2}\bar{u}.
\end{gather}

 \end{lemma}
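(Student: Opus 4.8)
The plan is to establish the two claims \eqref{bd-stat} and \eqref{stat-vx-bd} separately, using only the reduced stationary system \eqref{stat-refor} together with the representation formula in Proposition \ref{prop-stat}; no PDE machinery beyond elementary ODE/convexity arguments should be needed.

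For the two-sided bound \eqref{bd-stat}, I would start from $\bar u=\frac{M}{\int_{\mathcal{I}}\mathrm e^{\bar v}\,\mathrm dx}\,\mathrm e^{\bar v}$ in \eqref{stat-iden}. Since $0<\bar v\le v_\ast$ on $\bar{\mathcal{I}}$ and $|\mathcal{I}|=1$, we get $1\le\int_{\mathcal{I}}\mathrm e^{\bar v}\,\mathrm dx\le\mathrm e^{v_\ast}$ and $1\le\mathrm e^{\bar v}\le\mathrm e^{v_\ast}$, whence $M\mathrm e^{-v_\ast}\le\bar u\le M\mathrm e^{v_\ast}$. The upper bound $\bar v\le v_\ast$ is already contained in Proposition \ref{prop-stat}; for the lower bound, $\bar v\in C^1(\bar{\mathcal{I}})$ is continuous and strictly positive on the compact set $\bar{\mathcal{I}}$, so $\bar v\ge\min_{\bar{\mathcal{I}}}\bar v>0$. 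Taking $C_1$ to be the maximum of $M\mathrm e^{v_\ast}$, $M^{-1}\mathrm e^{v_\ast}$, $v_\ast$ and $(\min_{\bar{\mathcal{I}}}\bar v)^{-1}$ yields \eqref{bd-stat}.

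For \eqref{stat-vx-bd}, the idea is to introduce the auxiliary function $G(x):=D\bar v_x^2-\bar v^2\bar u$ and show $G\le0$ on $\bar{\mathcal{I}}$. Differentiating $G$ and substituting the identities $\bar u_x=\bar u\bar v_x$ and $D\bar v_{xx}=\bar u\bar v$ from \eqref{stat-refor}, the cross terms cancel and one is left with $G_x=-\bar u\bar v^2\bar v_x$ on $\mathcal{I}$. Next I would use that $\bar v_{xx}=D^{-1}\bar u\bar v>0$ on $\mathcal{I}$, so $\bar v$ is strictly convex on $[0,1]$ with $\bar v(0)=\bar v(1)=v_\ast$; hence $\bar v_x$ is strictly increasing on $[0,1]$, forcing $\bar v_x(0)<0<\bar v_x(1)$, and there is a unique $x_0\in(0,1)$ with $\bar v_x(x_0)=0$, $\bar v_x<0$ on $[0,x_0)$ and $\bar v_x>0$ on $(x_0,1]$. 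Consequently $G_x\ge0$ on $[0,x_0]$ and $G_x\le0$ on $[x_0,1]$, so the continuous function $G$ attains its maximum over $\bar{\mathcal{I}}$ at $x_0$, where $G(x_0)=-\bar v(x_0)^2\bar u(x_0)<0$ by \eqref{bd-stat}. Therefore $G\le0$ on $\bar{\mathcal{I}}$, which is exactly \eqref{stat-vx-bd}. The computations are short, so there is no serious obstacle; the only points requiring a little care are the algebraic cancellation that collapses $G_x$ into a single signed term — which is what makes the particular combination $D\bar v_x^2-\bar v^2\bar u$ the right one to monitor — and the convexity argument that pins down the interior minimum $x_0$ of $\bar v$ and thereby controls the sign of $G_x$ on the two subintervals.
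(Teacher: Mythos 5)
Your proof is correct. For \eqref{bd-stat} you simply make explicit what the paper dismisses as trivial, reading the bounds off the representation $\bar u=\lambda\mathop{\mathrm{e}}^{\bar v}$ and the positivity of the continuous function $\bar v$ on the compact interval; nothing to object to there. For \eqref{stat-vx-bd} you take a genuinely different route from the paper. The paper multiplies $D\bar v_{xx}=\bar u\bar v$ by $\bar v_x$, integrates from the interior minimum point $x_0$ of $\bar v$, changes variables $s=\bar v(y)$ using the explicit formula $\bar u=\lambda\mathop{\mathrm{e}}^{\bar v}$, and then integrates by parts in $s$ to extract the bound $\tfrac{D}{2}\bar v_x^2\le\tfrac{1}{2}\bar v^2\bar u$. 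You instead monitor the auxiliary function $G=D\bar v_x^2-\bar v^2\bar u$ directly: the identities $\bar u_x=\bar u\bar v_x$ and $D\bar v_{xx}=\bar u\bar v$ collapse $G_x$ to the single signed term $-\bar u\bar v^2\bar v_x$, and the strict convexity of $\bar v$ with equal boundary values pins down the unique interior critical point $x_0$, so that $G$ increases up to $x_0$ and decreases afterwards, with $G(x_0)=-\bar v(x_0)^2\bar u(x_0)<0$. Both arguments hinge on the same interior minimum $x_0$, but yours avoids the change of variables and the explicit exponential formula for $\bar u$ in this step (only the two differential relations from \eqref{stat-refor} are used), and it actually yields the strict inequality $D\bar v_x^2<\bar v^2\bar u$ on all of $\bar{\mathcal I}$, which is marginally stronger than what is claimed. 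The paper's computation, on the other hand, is a one-line energy identity once the formula for $\bar u$ is invoked. Either proof is acceptable.
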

 \begin{proof}
 According to Proposition \ref{prop-stat}, the proof of \eqref{bd-stat} is trivial and hence we prove \eqref{stat-vx-bd} only. Since $ 0 <\bar{v}(x) \leq v _{\ast} $ for any $ x \in \bar{\mathcal{I}} $, then there exists an $ x _{0} \in (0,1) $ such that
 \begin{gather*}
  \displaystyle 0<\bar{v}(x _{0})= \min _{x \in \bar{\mathcal{I}}}\bar{v}(x)\ \ \mbox{and}\ \ \bar{v}_{x}( x_{0})=0.
  \end{gather*}
  Multiplying the second equation in \eqref{stat-problem} by $ \bar{v} _{x} $ followed by an integration from $ x _{0} $ to $ x $, we have
\begin{gather*}
\displaystyle  \frac{D}{2}\bar{v} _{x}^{2}=\int _{x _{0}}^{x} \bar{u}\bar{v}v _{y}\mathrm{d}y =  \lambda\int _{\bar{v}(x _{0})}^{\bar{v}(x)} s     {\mathop{\mathrm{e}}}^{s}\mathrm{d}s \leq \lambda \int _{0}^{\bar{v}} s 		{\mathop{\mathrm{e}}}^{s}\mathrm{d}s,
\end{gather*}
with $ \lambda= \frac{M}{\int _{\mathcal{I}}		{\mathop{\mathrm{e}}}^{\bar{v}}\mathrm{d}x} $,
where we have used the following identity
\begin{gather}\label{baru}
\displaystyle \bar{u}= \frac{M}{\int _{\mathcal{I}}		{\mathop{\mathrm{e}}}^{\bar{v}}\mathrm{d}x}		{\mathop{\mathrm{e}}}^{\bar{v}}=:\lambda{\mathop{\mathrm{e}}}^{\bar{v}}
\end{gather}
from \eqref{stat-iden}. Hence, we get, thanks to \eqref{baru} and integration by parts, that
\begin{align*}
\displaystyle \frac{D \bar{v}_{x}^{2}}{2}& \leq \lambda \frac{\bar{v}^{2}		{\mathop{\mathrm{e}}}^{\bar{v}}}{2}- \frac{\lambda}{2}\int _{0}^{\bar{v}}s ^{2}		{\mathop{\mathrm{e}}}^{s}\mathrm{d}s  =\frac{\bar{v}^{2}\bar{u}}{2}- \frac{\lambda}{2}\int _{0}^{\bar{v}}s ^{2}		{\mathop{\mathrm{e}}}^{s}\mathrm{d}s \leq \frac{\bar{v}^{2}\bar{u}}{2}.
\end{align*}
The proof is completed.
 \end{proof}
 Now let us turn to estimates on the solution $ (\varphi, \psi) $. We begin with the following weighted $ L ^{2} $ estimate.
 \begin{lemma}\label{lem-l2-esti}
 For any solution $ (\varphi,\psi)\in X(0,T) $ to the problem \eqref{vfi-psi-eq}--\eqref{bd-con-refor} satisfying \eqref{aprori-assum}, it holds that
\begin{gather}\label{con-1-l2}
\displaystyle \int _{\mathcal{I}}\left( \frac{\varphi ^{2}}{\bar{u}}+\frac{\psi^{2}}{\bar{v}} \right)\mathrm{d}x +\int _{0}^{t}\left( \|\varphi _{x}\|_{L ^{2}}^{2}+\|\psi _{x}\|_{L ^{2}}^{2} \right) \mathrm{d}\tau \leq C  \left( \|\varphi _{0}\|_{L ^{2}}^{2}+\|\psi _{0}\|_{L ^{2}}^{2} \right)
\end{gather}
for any $ t \in [0,T] $, provided that $ \delta $ is suitably small, where $ C >0 $ is a constant independent of $ t $.
 \end{lemma}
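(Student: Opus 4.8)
The plan is to test the two perturbation equations in \eqref{vfi-psi-eq} against the weighted multipliers $\varphi/\bar u$ and $\psi/\bar v$ respectively, and add the resulting relations. The choice of weights is forced by the stationary identities in \eqref{stat-refor}: from $\bar u_x=\bar u\bar v_x$ one has $\bar v_x=\bar u_x/\bar u$, so the weight $1/\bar u$ turns the cross-diffusion term $-\varphi_x\bar v_x$ in the $\varphi$-equation into an exact cancellation, while from $D\bar v_{xx}=\bar u\bar v$ together with Lemma~\ref{lem-stat} the (wrong-sign) zeroth-order term that the $\psi$-diffusion generates gets absorbed.

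First I would multiply the $\varphi$-equation by $\varphi/\bar u$ and integrate over $\mathcal I$. Since $\bar u$ is time independent, the time term is $\frac{d}{dt}\int\frac{\varphi^2}{2\bar u}\,dx$. Integrating $\int\frac{\varphi\varphi_{xx}}{\bar u}\,dx$ by parts (the boundary term vanishes by $\varphi|_{x=0,1}=0$ from \eqref{bd-con-refor}) gives $-\int\frac{\varphi_x^2}{\bar u}\,dx+\int\frac{\varphi\varphi_x\bar u_x}{\bar u^2}\,dx$, and the last integral equals $\int\frac{\varphi\varphi_x\bar v_x}{\bar u}\,dx$ by $\bar u_x/\bar u=\bar v_x$, which cancels exactly the contribution of $-\varphi_x\bar v_x$. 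Hence
\[
\frac{d}{dt}\int\frac{\varphi^2}{2\bar u}\,dx+\int\frac{\varphi_x^2}{\bar u}\,dx=-\int\varphi\psi_x\,dx-\int\frac{\varphi\varphi_x\psi_x}{\bar u}\,dx .
\]
Next I would multiply the $\psi$-equation by $\psi/\bar v$ and integrate; integrating the diffusion term by parts twice (all boundary terms vanish since $\psi|_{x=0,1}=0$) and using $D\bar v_{xx}=\bar u\bar v$ yields
\[
D\int\frac{\psi\psi_{xx}}{\bar v}\,dx=-D\int\frac{\psi_x^2}{\bar v}\,dx-\frac12\int\frac{\bar u\psi^2}{\bar v}\,dx+\int\frac{D\bar v_x^2\psi^2}{\bar v^3}\,dx .
\]
Combining this with the contribution $-\int\frac{\bar u\psi^2}{\bar v}\,dx$ of the reaction term $-\bar u\psi$ and invoking $D\bar v_x^2\le\bar v^2\bar u$ from \eqref{stat-vx-bd} to bound $\int\frac{D\bar v_x^2\psi^2}{\bar v^3}\,dx\le\int\frac{\bar u\psi^2}{\bar v}\,dx$, a strictly positive multiple of $\int\frac{\bar u\psi^2}{\bar v}\,dx$ survives:
\[
\frac{d}{dt}\int\frac{\psi^2}{2\bar v}\,dx+D\int\frac{\psi_x^2}{\bar v}\,dx+\frac12\int\frac{\bar u\psi^2}{\bar v}\,dx\le-\int\varphi_x\psi\,dx-\int\frac{\varphi_x\psi^2}{\bar v}\,dx .
\]

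Adding the last two relations, the decisive cancellation is $-\int\varphi\psi_x\,dx-\int\varphi_x\psi\,dx=-\int(\varphi\psi)_x\,dx=0$ by \eqref{bd-con-refor}; this is exactly where the anti-derivative reformulation pays off, eliminating the linear cross-coupling with no sign condition. One is then left with the nonlinear remainder $-\int\frac{\varphi\varphi_x\psi_x}{\bar u}\,dx-\int\frac{\varphi_x\psi^2}{\bar v}\,dx$, which by \eqref{bd-stat} (to replace the weights by constants) and \eqref{small-implica} is bounded by $C\delta(\|\varphi_x\|_{L^2}^2+\|\psi_x\|_{L^2}^2)+C\delta^{1/2}\sigma^{1/2}(\|\varphi_x\|_{L^2}^2+\|\psi\|_{L^2}^2)$. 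Since $D>0$ and $C_1^{-1}\le\bar u,\bar v\le C_1$, the left-hand dissipation controls $c(\|\varphi_x\|_{L^2}^2+\|\psi_x\|_{L^2}^2+\|\psi\|_{L^2}^2)$ for some $c>0$, so for $\delta$ suitably small the remainder is absorbed and one gets $\frac{d}{dt}\int\big(\frac{\varphi^2}{2\bar u}+\frac{\psi^2}{2\bar v}\big)\,dx+\tfrac c2(\|\varphi_x\|_{L^2}^2+\|\psi_x\|_{L^2}^2)\le0$. Integrating in $\tau$ over $[0,t]$ and using \eqref{bd-stat} to bound the initial energy above and the current energy below by constant multiples of $\|\varphi_0\|_{L^2}^2+\|\psi_0\|_{L^2}^2$ and $\int(\varphi^2/\bar u+\psi^2/\bar v)\,dx$ respectively yields \eqref{con-1-l2}.

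The only genuinely delicate point is the double integration by parts on the $\psi$-diffusion term: it produces the wrong-sign zeroth-order term $\int\frac{D\bar v_x^2\psi^2}{\bar v^3}\,dx$, and the scheme closes only because the bound $D\bar v_x^2\le\bar v^2\bar u$ of \eqref{stat-vx-bd} still leaves a coercive remainder $\tfrac12\int\frac{\bar u\psi^2}{\bar v}\,dx$ in the dissipation; everything else is routine once the weights $1/\bar u,\,1/\bar v$ and the cancellation $-\int(\varphi\psi)_x\,dx=0$ are in place. (Note that the argument genuinely uses $D>0$ to extract $\|\psi_x\|_{L^2}^2$, which is why the $D=0$ case is treated separately.)
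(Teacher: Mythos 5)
Your argument is correct and coincides with the paper's own proof: the same weighted multipliers $\varphi/\bar u$ and $\psi/\bar v$, the same cancellation from $\bar u_x=\bar u\bar v_x$, the same use of $D\bar v_{xx}=\bar u\bar v$ and \eqref{stat-vx-bd} to retain $\tfrac12\int\bar u\psi^2/\bar v\,\mathrm{d}x$, and the same cancellation $\int(\varphi\psi)_x\,\mathrm{d}x=0$ of the linear cross terms. The only cosmetic difference is that you bound $\|\psi\|_{L^\infty}$ via \eqref{small-implica} (introducing a harmless $\delta^{1/2}\sigma^{1/2}$ factor) where the paper uses $\|\psi\|_{L^\infty}\leq C\|\psi_x\|_{L^2}$ to keep the smallness condition on $\delta$ independent of $\sigma$.
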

 \begin{proof}
   Multiplying the first equation in \eqref{vfi-psi-eq} by $ \frac{\varphi}{\bar{u}} $ followed by an integration over $ \mathcal{I} $, we get after using integration by parts that
\begin{align}\label{diff-iden-firs}
&\displaystyle  \frac{1}{2}\frac{\mathrm{d}}{\mathrm{d}t}\int _{\mathcal{I} }\frac{ \varphi ^{2}}{\bar{u}} \mathrm{d}x+\int _{\mathcal{I}}\frac{ \varphi _{x}^{2}}{\bar{u}} \mathrm{d}x
 =-\int _{\mathcal{I} } \varphi  \varphi _{x}\left [\bar{v}_{x}+\left( \frac{1}{\bar{u}} \right)_{x}  \right]  \mathrm{d}x-\int _{\mathcal{I}} \psi _{x} \varphi \mathrm{d}x-\int _{\mathcal{I} }\frac{ \psi _{x} \varphi  \varphi _{x}}{\bar{u}} \mathrm{d}x.
 \end{align}
 By the first equation in \eqref{stat-refor}, we have
 \begin{align*}
 \displaystyle   \left( \frac{1}{\bar{u}} \right)_{x}+ \frac{\bar{v}_{x}}{\bar{u}}=- \frac{1}{\bar{u}^{2}}\left( \bar{u}_{x}-\bar{u}\bar{v}_{x} \right)=0,
 \end{align*}
 and thus
 \begin{gather}\label{concelat}
 \displaystyle  -\int _{\mathcal{I} } \varphi  \varphi _{x}\left [\bar{v}_{x}+\left( \frac{1}{\bar{u}} \right)_{x}  \right]  \mathrm{d}x =0.
 \end{gather}
 Thanks to \eqref{small-implica} and the Cauchy-Schwarz inequality, we get
 \begin{gather}\label{nonlie-vfi}
 \displaystyle  \int _{\mathcal{I} }\frac{ \psi _{x} \varphi  \varphi _{x}}{\bar{u}} \mathrm{d}x \leq \|\varphi\|_{L ^{\infty}} \|\psi _{x}\|_{L ^{2}}\|\varphi _{x}\|_{L ^{2}} \leq C \delta \left( \|\varphi _{x}\|_{L ^{2}}^{2}+\|\psi _{x}\|_{L ^{2}}^{2} \right).
 \end{gather}
 Inserting \eqref{concelat} and \eqref{nonlie-vfi} into \eqref{diff-iden-firs} gives
 \begin{gather}\label{diff-con-phi}
 \displaystyle  \frac{1}{2}\frac{\mathrm{d}}{\mathrm{d}t}\int _{\mathcal{I} }\frac{\varphi ^{2}}{\bar{u}} \mathrm{d}x+ \int _{\mathcal{I}}\frac{ \varphi _{x}^{2}}{\bar{u}} \mathrm{d}x   \leq -\int _{\mathcal{I} } \psi _{x} \varphi \mathrm{d}x+C \delta \left( \|\varphi _{x}\|_{L ^{2}}^{2}+\|\psi _{x}\|_{L ^{2}}^{2} \right).
 \end{gather}
To proceed, multiplying the second equation in \eqref{vfi-psi-eq} by $ \frac{\psi}{\bar{v}}$ and then integrating the resulting equation over $ \mathcal{I} $, we have
 \begin{align}\label{psi-esti-basic-bd}
 \displaystyle\frac{1}{2}\frac{\mathrm{d}}{\mathrm{d}t} \int _{\mathcal{I} }\frac{ \psi ^{2}}{\bar{v}} \mathrm{d}x + D \int _{\mathcal{I}}\frac{ \psi _{x}^{2}}{\bar{v}} \mathrm{d}x
    &\displaystyle =-  \int _{\mathcal{I} } \left[ \frac{\bar{u}}{\bar{v}} \psi ^{2} +D \psi  \psi _{x}\Big( \frac{1}{\bar{v}} \Big)_{x} \right] \mathrm{d}x -  \int _{\mathcal{I} } \psi  \varphi _{x}  \mathrm{d}x - \int _{\mathcal{I} }\frac{ \psi ^{2} \varphi _{x}}{\bar{v}} \mathrm{d}x,
        \end{align}
  where, by virtue of \eqref{stat-problem} and \eqref{stat-vx-bd}, it holds that
  \begin{align*}
  \displaystyle -  \int _{\mathcal{I} } \left[ \frac{\bar{u}}{\bar{v}} \psi ^{2} +D \psi  \psi _{x}\Big( \frac{1}{\bar{v}} \Big)_{x} \right] \mathrm{d}x & = - \int _{ \mathcal{I}}  \psi ^{2}\left( \frac{\bar{u}}{\bar{v}}+ \frac{D \bar{v}_{xx}}{2\bar{v}^{2}}- \frac{D\bar{v}_{x}^{2}}{\bar{v}^{3}} \right) \mathrm{d}x
   =- \int _{ \mathcal{I}}  \psi ^{2}\left(\frac{3}{2}\frac{\bar{u}}{\bar{v}}- D\frac{\bar{v}_{x}^{2}}{\bar{v}^{3}} \right) \mathrm{d}x
    \nonumber \\
    &\displaystyle \leq - \frac{3}{2}\int _{\mathcal{I}}\frac{\bar{u}}{\bar{v}}\psi ^{2}\mathrm{d}x + \int _{\mathcal{I}}\frac{\bar{u}}{\bar{v}}\psi ^{2}\mathrm{d}x \leq - \frac{1}{2}\int _{ \mathcal{I}} \frac{\bar{u}}{\bar{v}} \psi ^{2} \mathrm{d }x.
  \end{align*}
  For the last term on the right hand side of \eqref{psi-esti-basic-bd}, by \eqref{sobolve-l-inty-zero}, \eqref{aprori-assum}, \eqref{bd-stat} and the H\"older inequality, we get
  \begin{align*}
  \displaystyle  - \int _{\mathcal{I} }\frac{ \psi ^{2} \varphi _{x}}{\bar{v}}\mathrm{d}x
  & \leq C \|\psi \|_{L ^{\infty}}\|\psi  \|_{L ^{2}}\|\varphi _{x}\|_{L ^{2}} \leq C \|\varphi _{x}\|_{L ^{2}}\|\psi _{x}\|_{L ^{2}}^{2}
  \leq C \delta \|\psi _{x}\|_{L ^{2}}^{2}.
  \end{align*}
  We thus have from \eqref{psi-esti-basic-bd} that
  \begin{gather}\label{diff-con-psi}
  \displaystyle  \displaystyle \frac{1}{2}\frac{\mathrm{d}}{\mathrm{d}t} \int _{\mathcal{I} }\frac{ \psi ^{2}}{\bar{v}} \mathrm{d}x + D \int _{\mathcal{I}}\frac{ \psi _{x}^{2}}{\bar{v}} \mathrm{d}x +\frac{1}{2}\int _{ \mathcal{I}} \frac{\bar{u}}{\bar{v}} \psi ^{2} \mathrm{d }x  \leq -  \int _{\mathcal{I} } \psi  \varphi _{x}  \mathrm{d}x + C \delta  \|\psi _{x}\| _{L ^{2}}^{2}.
  \end{gather}
  Adding \eqref{diff-con-psi} with \eqref{diff-con-phi}, we then arrive at
  \begin{align*}
  \displaystyle  \displaystyle \frac{1}{2}\frac{\mathrm{d}}{\mathrm{d}t} \int _{\mathcal{I} }\left( \frac{ \varphi ^{2}}{\bar{u}}+\frac{ \psi ^{2}}{\bar{v}} \right)  \mathrm{d}x +\int _{\mathcal{I}}\left(\frac{\varphi _{x}^{2}}{\bar{u}}+ D \frac{\psi _{x}^{2}}{\bar{v}} + \frac{\bar{u}\psi ^{2}}{2 \bar{v}}\right) \mathrm{d}x
  \leq C \delta \left( \|\varphi _{x}\|_{L ^{2}}^{2}+\|\psi _{x}\|_{L ^{2}}^{2} \right).
  \end{align*}
  This along with \eqref{bd-stat} implies that
  \begin{align*}
  \displaystyle \frac{1}{2}\frac{\mathrm{d}}{\mathrm{d}t} \int _{\mathcal{I} }\left( \frac{ \varphi ^{2}}{\bar{u}}+\frac{ \psi ^{2}}{\bar{v}} \right)  \mathrm{d}x +\frac{1}{C _{1}}\min\{1,D\} \left( \|\varphi _{x}\|_{L ^{2}}^{2}+\|\psi _{x}\|_{L ^{2}}^{2} \right) \leq  C \delta \left( \|\varphi _{x}\|_{L ^{2}}^{2}+\|\psi _{x}\|_{L ^{2}}^{2} \right).
  \end{align*}
 Therefore it holds that
  \begin{align}\label{diff-con-1-l2}
  \displaystyle \frac{\mathrm{d}}{\mathrm{d}t} \int _{\mathcal{I} }\left( \frac{ \varphi ^{2}}{\bar{u}}+\frac{ \psi ^{2}}{\bar{v}} \right)  \mathrm{d}x + \beta \left( \|\varphi _{x}\|_{L ^{2}} ^{2}+\|\psi_{x}\|_{L ^{2}}^{2} \right) \leq 0,
  \end{align}
  provided that
  \begin{gather}\label{deta-constrian-lem1}
  \displaystyle C \delta  \leq \frac{1}{2C _{1}} \min\{1,D\}=: \frac{\beta}{2}.
   \end{gather}
   Integrating \eqref{diff-con-1-l2} over $ (0,t) $, we then get \eqref{con-1-l2}. The proof of Lemma \ref{lem-l2-esti} is complete.
 \end{proof}

 In the next lemma, we are going to derive the estimate on $ \varphi _{x} $.
 \begin{lemma}\label{lem-smallness-assum-clos}
 Under the conditions of  Lemma \ref{lem-l2-esti}, let $ (\varphi,\psi)\in X(0,T) $ be a solution to the initial-boundary value problem \eqref{vfi-psi-eq}--\eqref{bd-con-refor} satisfying \eqref{aprori-assum}. Then for any given $ \sigma>0 $, it holds for any $ t \in [0,T]$ that
\begin{gather}\label{con-decay-l2}
\displaystyle  \|\varphi \| _{H ^{1}}^{2}+\|\psi \| _{L ^{2}}^{2} 	\leq  C \left(\|\varphi _{0}\|_{H^{1}} ^{2}+\|\psi _{0}\|_{L ^{2}}^{2}\right)	{\mathop{\mathrm{e}}}^{-\alpha _{1} t},
\end{gather}
provided $ \delta $ is suitably small, where $ \alpha _{1} $ is determined by \eqref{al-defi}.
 \end{lemma}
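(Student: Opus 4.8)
The plan is to upgrade the weighted $L^2$ bound of Lemma~\ref{lem-l2-esti} to an $H^1$ bound on $\varphi$ together with exponential decay, by closing a Gr\"onwall estimate for a suitable combined energy functional; the additional ingredient needed is an estimate for $\varphi_x$. First I would test the first equation of \eqref{vfi-psi-eq} by $-\varphi_{xx}$ and integrate over $\mathcal{I}$. Since $\varphi(0,t)=\varphi(1,t)=0$ for all $t$ by \eqref{bd-con-refor}, differentiating in time gives $\varphi_t(0,t)=\varphi_t(1,t)=0$, so the boundary term produced by integrating the time-derivative term by parts vanishes and the left-hand side becomes $\tfrac12\frac{\mathrm{d}}{\mathrm{d}t}\|\varphi_x\|_{L^2}^2+\|\varphi_{xx}\|_{L^2}^2$; the right-hand side is $\int_{\mathcal{I}}\varphi_{xx}\bigl(\varphi_x\bar v_x+\bar u\psi_x+\varphi_x\psi_x\bigr)\,\mathrm{d}x$. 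Using \eqref{bd-stat}, the bound $\bar v_x\in L^\infty$ (a consequence of \eqref{stat-vx-bd} and \eqref{bd-stat}), the smallness estimates \eqref{small-implica}, and Young's inequality, the three terms are bounded respectively by $\tfrac18\|\varphi_{xx}\|_{L^2}^2+C\|\varphi_x\|_{L^2}^2$, by $\tfrac18\|\varphi_{xx}\|_{L^2}^2+C\|\psi_x\|_{L^2}^2$, and --- exploiting that $\|\varphi_x\|_{L^\infty}$ is small once $\delta$ is small --- by $\tfrac18\|\varphi_{xx}\|_{L^2}^2+C\|\psi_x\|_{L^2}^2$, which yields
\[
\tfrac12\frac{\mathrm{d}}{\mathrm{d}t}\|\varphi_x\|_{L^2}^2+\tfrac12\|\varphi_{xx}\|_{L^2}^2\le C\|\varphi_x\|_{L^2}^2+C\|\psi_x\|_{L^2}^2 .
\]

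Next I would couple this with the differential inequality \eqref{diff-con-1-l2} from the proof of Lemma~\ref{lem-l2-esti}. Adding $2\kappa$ times the last display to \eqref{diff-con-1-l2}, with a small parameter $\kappa>0$, and setting
\[
E(t):=\int_{\mathcal{I}}\Bigl(\frac{\varphi^2}{\bar u}+\frac{\psi^2}{\bar v}\Bigr)\,\mathrm{d}x+\kappa\|\varphi_x\|_{L^2}^2 ,
\]
one gets $\frac{\mathrm{d}}{\mathrm{d}t}E+(\beta-2C\kappa)\bigl(\|\varphi_x\|_{L^2}^2+\|\psi_x\|_{L^2}^2\bigr)+\kappa\|\varphi_{xx}\|_{L^2}^2\le0$, so fixing $\kappa$ with $2C\kappa\le\beta/2$ leaves $\frac{\mathrm{d}}{\mathrm{d}t}E+\tfrac{\beta}{2}\bigl(\|\varphi_x\|_{L^2}^2+\|\psi_x\|_{L^2}^2\bigr)\le0$. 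Since $\varphi,\psi\in H_0^1$, the Poincar\'e-type inequality $\|f\|_{L^2}\le c_3\|f_x\|_{L^2}$ contained in \eqref{sobolve-l-inty-zero} together with \eqref{bd-stat} shows that $E(t)$ is comparable to $\|\varphi\|_{H^1}^2+\|\psi\|_{L^2}^2$ and that $E(t)\le C\bigl(\|\varphi_x\|_{L^2}^2+\|\psi_x\|_{L^2}^2\bigr)$; hence $\frac{\mathrm{d}}{\mathrm{d}t}E+\alpha_1E\le0$ with $\alpha_1:=\beta/(2C)$. Integrating over $(0,t)$ and using $E(0)\le C\bigl(\|\varphi_0\|_{H^1}^2+\|\psi_0\|_{L^2}^2\bigr)$ then yields \eqref{con-decay-l2}.

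The step I expect to be the main obstacle is handling the term $C\|\varphi_x\|_{L^2}^2$ on the right-hand side of the $\varphi_x$-estimate: its coefficient is fixed rather than small, and it cannot be absorbed by $\|\varphi_{xx}\|_{L^2}^2$ within that estimate alone. This is precisely why the $\varphi_x$-estimate must be coupled, through the small weight $\kappa$, to the weighted $L^2$ estimate of Lemma~\ref{lem-l2-esti}, whose dissipation already contains $\|\varphi_x\|_{L^2}^2$ with the \emph{fixed} positive coefficient $\beta$. A closely related subtlety is that the cross term $\int_{\mathcal{I}}\varphi_{xx}\bar u\psi_x\,\mathrm{d}x$ carries no smallness (the coefficient $\bar u$ is of order one), so it must be split as $\varepsilon\|\varphi_{xx}\|_{L^2}^2+C_\varepsilon\|\psi_x\|_{L^2}^2$ and absorbed using the $\psi_x$-dissipation, which is the reason the right-hand side of the $\varphi_x$-estimate contains $\|\psi_x\|_{L^2}^2$ at all.
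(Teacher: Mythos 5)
Your argument is correct and follows essentially the same route as the paper: the only difference is that you test the $\varphi$-equation with $-\varphi_{xx}$ (gaining $\|\varphi_{xx}\|_{L^2}^2$ as dissipation) whereas the paper tests with $\varphi_t$ (gaining $\|\varphi_t\|_{L^2}^2$), and in both cases the resulting inequality for $\frac{\mathrm{d}}{\mathrm{d}t}\|\varphi_x\|_{L^2}^2$ is coupled to the weighted $L^2$ estimate \eqref{diff-con-1-l2} --- your small weight $\kappa$ on the $H^1$ part is exactly the paper's large multiple $K$ of \eqref{diff-con-1-l2} in a different normalization --- before the Poincar\'e inequality and Gr\"onwall argument finish the proof. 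Since both dissipation terms are ultimately discarded, the choice of multiplier is immaterial for this lemma (the paper's choice of $\varphi_t$ pays off only later, in Lemma \ref{lem-bd-closure}, where the bound on $\int_0^t\|\varphi_\tau\|_{L^2}^2\,\mathrm{d}\tau$ is reused).
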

       \begin{proof}
        Multiplying the first equation in \eqref{vfi-psi-eq} by $\varphi_{t} $ and integrating the resulting equation over $ \mathcal{I} $, we get by integration by parts that
\begin{align}\label{diff-vfi-x-l2}
&\displaystyle \frac{1}{2}\frac{\mathrm{d}}{\mathrm{d}t}\int _{\mathcal{I} }\varphi _{x}^{2} \mathrm{d}x+
\int _{\mathcal{I} } \varphi _{t}^{2} \mathrm{d}x  = -\int _{\mathcal{I} } \varphi _{x} \bar{v} _{x} \varphi _{t} \mathrm{d}x- \int _{\mathcal{I} }\bar{u} \psi _{x}\varphi _{t} \mathrm{d}x-\int _{\mathcal{I} } \varphi _{t} \varphi _{x} \psi _{x} \mathrm{d}x.
\end{align}
Next we estimate the terms on the right hand side of \eqref{diff-vfi-x-l2}. By \eqref{bd-stat}, \eqref{stat-vx-bd} and the Cauchy-Schwarz inequality, we deduce that
\begin{align}\label{vfi-x-err1}
\displaystyle  -\int _{\mathcal{I} } \varphi _{x}\bar{v} _{x} \varphi _{t} \mathrm{d}x &\leq \|\bar{v}_{x}\|_{L ^{\infty}}\| \varphi _{x}\|_{L ^{2}}\| \varphi _{t}\|_{L ^{2}} \leq  \varepsilon \| \varphi _{t}\|_{L ^{2}}^{2}+C _{\varepsilon}\| \varphi _{x}\|_{L ^{2}}^{2}, \\
\displaystyle\displaystyle -\int _{\mathcal{I} }\bar{u}\psi _{x} \varphi _{t} \mathrm{d}x &\leq  \|\bar{u}\| _{L ^{\infty
}} \|\varphi _{t}\|_{L ^{2}}\|\psi _{x}\|_{L ^{2}} \leq\varepsilon \|\varphi _{t}\| _{L ^{2}}^{2}+C _{\varepsilon}\left\|\psi _{x}\right\| _{L ^{2}} ^{2}
 \nonumber
\end{align}
for any $ \varepsilon>0 $. For the last term on the right hand side of \eqref{diff-vfi-x-l2}, it follows from \eqref{small-implica} and the Cauchy-Schwarz inequality that
\begin{align}\label{vfi-x-err3}
\displaystyle -\int _{\mathcal{I} } \varphi _{t} \varphi _{x} \psi _{x} \mathrm{d}x \leq \| \varphi _{x}\|_{L ^{\infty}}\| \varphi _{t}\|_{L ^{2}} \left\|\psi _{x}\right\| _{L ^{2}} \leq C \left( \delta+C \delta ^{\frac{1}{2}}\sigma ^{\frac{1}{2}} \right)  \left( \| \varphi _{t}\| _{L ^{2}}^{2}+ \|\psi _{x}\|  _{L ^{2}}^{2}\right).
\end{align}
Substituting \eqref{vfi-x-err1}--\eqref{vfi-x-err3} into \eqref{diff-vfi-x-l2} and choosing $ \delta $ small enough such that
\begin{gather}\label{DE-M1-const}
\displaystyle C \left( \delta +C \delta ^{\frac{1}{2}}\sigma ^{\frac{1}{2}} \right) \leq \frac{1}{4},
\end{gather}
we get after taking $ \varepsilon $ suitably small that
\begin{align}\label{vfi-x}
\displaystyle   \frac{1}{2}\frac{\mathrm{d}}{\mathrm{d}t}\int _{\mathcal{I} }\varphi _{x}^{2} \mathrm{d}x+ \frac{1}{2}\int _{\mathcal{I} } \varphi _{t}^{2} \mathrm{d}x \leq C \|\psi _{x}\|_{L ^{2}}^{2}.
\end{align}
Adding \eqref{vfi-x} with \eqref{diff-con-1-l2} multiplied by a sufficiently large constant $ K >0$ such that $ \frac{1}{2} K \beta>C $, it then follows that
\begin{align}\label{vfi-x-diff}
\displaystyle \frac{\mathrm{d}}{\mathrm{d}t} \int _{\mathcal{I}}\left( \frac{ \varphi ^{2}}{\bar{u}}+\frac{ \psi ^{2}}{\bar{v}}+\varphi _{x}^{2} \right)\mathrm{d}x+ \beta _{1}\left( \|\varphi _{x}\|_{L ^{2}}^{2}+\|\psi _{x}\| _{L ^{2}}^{2} +\|\varphi _{t}\|_{L ^{2}}^{2}\right) \leq 0
\end{align}
for some $ \beta _{1}>0 $. Multiplying \eqref{vfi-x-diff} by $ {\mathop{\mathrm{e}}}^{\alpha _{1} t} $ with $ \alpha _{1} $ being a constant to be determined later, we have
   \begin{align}\label{decay-exp-l2}
   &\displaystyle  \frac{\mathrm{d}}{\mathrm{d}t} \left\{ 		{\mathop{\mathrm{e}}}^{\alpha _{1} t} \int _{\mathcal{I} }\Big( \frac{ \varphi ^{2}}{\bar{u}}+\frac{ \psi ^{2}}{\bar{v}} +\varphi _{x}^{2} \Big)  \mathrm{d}x  \right\}+
    		{\mathop{\mathrm{e}}}^{\alpha _{1} t} \left[ \beta _{1}( \|\varphi _{x}\|_{L ^{2}} ^{2}+\|\psi _{x}\|_{L ^{2}}^{2} )-  \int _{\mathcal{I} }\alpha _{1}\Big( \frac{ \varphi ^{2}}{\bar{u}}+\frac{ \psi ^{2}}{\bar{v}} +\varphi _{x}^{2}\Big)  \mathrm{d}x \right]   \leq 0,
   \end{align}
   where we have ignored $ \|\varphi _{t}\|_{L ^{2}}^{2} $ on the left hand side of \eqref{vfi-x-diff} due to $ \beta _{1}>0 $. By \eqref{sobolve-l-inty-zero}, \eqref{bd-con-refor}, \eqref{bd-stat} and the Sobolev inequality $ \|f\|_{L ^{2}}\leq C \|f _{x}\|_{L ^{2}} $ for any $ f \in H _{0}^{1} $ asserting
   \begin{gather*}
   \displaystyle   \int _{\mathcal{I} }\left( \frac{ \varphi ^{2}}{\bar{u}}+\frac{ \psi ^{2}}{\bar{v}} \right)  \mathrm{d}x \leq C\left( \|\varphi _{x}\|_{L ^{2}} ^{2}+\| \psi _{x}\|_{L ^{2}}^{2} \right)
   \end{gather*}
for some constant $ C>0 $, we get from \eqref{decay-exp-l2} that
\begin{align}\label{l2-full-diff}
\displaystyle   \frac{\mathrm{d}}{\mathrm{d}t} \left\{ 		{\mathop{\mathrm{e}}}^{\alpha _{1} t} \int _{\mathcal{I} }\left( \frac{ \varphi ^{2}}{\bar{u}}+\frac{ \psi ^{2}}{\bar{v}} +\varphi _{x}^{2}\right)  \mathrm{d}x  \right\} \leq 0,
\end{align}
provided that
\begin{gather}\label{al-defi}
\displaystyle  \alpha _{1}\leq \frac{1}{2}\min \left\{ \frac{\beta _{1}}{C}, \beta _{1} \right\}.
\end{gather}
This along with \eqref{bd-stat} gives rise to
\begin{gather}\label{decay-l2}
\displaystyle  \|\varphi \| _{H ^{1}}^{2}+\|\psi \| _{L ^{2}}^{2} 	\leq C \left(\|\varphi _{0}\|_{H^{1}} ^{2}+\|\psi _{0}\|_{L ^{2}}^{2}\right)	{\mathop{\mathrm{e}}}^{-\alpha _{1} t}.
\end{gather}
We thus finish the  proof of Lemma \ref{lem-smallness-assum-clos}.
\end{proof}
In what follows, we derive some higher-order estimates for the solution.
\begin{lemma}\label{lem-bd-closure}
Let $ (\varphi,\psi)\in X(0,T) $ be a solution to the initial-boundary value problem \eqref{vfi-psi-eq}--\eqref{bd-con-refor} satisfying \eqref{aprori-assum} and assume the conditions of Lemma \ref{lem-l2-esti} hold. Then it holds for any $ t \in [0,T] $ that
\begin{gather}\label{bd-assup-closure}
\displaystyle  \|\varphi _{xx}\|_{L ^{2}} + \|\psi _{x}\|_{H ^{1}} \leq \frac{3}{2}\sigma ,
\end{gather}
and that
\begin{align}\label{hig-multi}
\displaystyle \int _{0}^{t}\left( \|\varphi _{x}\|_{H ^{1}}^{2}+\|\psi _{x}\|_{H ^{1}}^{2}+\|\varphi _{\tau}\|_{H ^{1}}^{2}+\|\psi _{\tau}\|_{H ^{1}}^{2} \right)\mathrm{d}\tau \leq C \left( \|\varphi _{0}\|_{H ^{2}}^{2}+\|\psi _{0}\|_{H ^{2}}^{2} \right),
\end{align}
provided that $ \delta $ and $ \|\varphi _{0}\|_{H ^{1}}^{2}+\|\psi _{0}\|_{L ^{2}}^{2} $ are suitably small, where $ \sigma $ is given by \eqref{M1-defi}.
\end{lemma}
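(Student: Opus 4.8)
The plan is to close the higher–order part of the a priori assumption \eqref{aprori-assum} by a weighted energy estimate for the once–$t$–differentiated system, and then to read $\varphi_{xx}$ and $\psi_{xx}$ off the equations \eqref{vfi-psi-eq}. The structural difficulty is that $\bar v_x$ is of size $D^{-1/2}$ for small $D$ (consistently with \eqref{stat-vx-bd}), so the naive multipliers $\varphi_t,\psi_t$ would produce constants blowing up as $D\to0$; I would instead reuse the cancellations already exploited in the proof of Lemma \ref{lem-l2-esti}.

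Differentiate both equations of \eqref{vfi-psi-eq} in $t$ and test the $\varphi_t$–equation with $\varphi_t/\bar u$ and the $\psi_t$–equation with $\psi_t/\bar v$. Since $\varphi_t$ and $\psi_t$ vanish on $\partial\mathcal I$ (differentiate \eqref{bd-con-refor}), all boundary terms drop. The identity $(1/\bar u)_x+\bar v_x/\bar u=0$ from \eqref{stat-refor} annihilates the $\bar v_x$–term in the $\varphi_t$–estimate, while the zeroth–order term together with the $\bar v_x$–weighted term in the $\psi_t$–estimate combine — exactly as in Lemma \ref{lem-l2-esti}, using $\bar v_{xx}=\bar u\bar v/D$ and \eqref{stat-vx-bd} — into a coercive damping $-\tfrac12\int_{\mathcal I}(\bar u/\bar v)\psi_t^2\,\mathrm{d}x$. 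Adding the two identities, the linear coupling terms $-\int_{\mathcal I}\varphi_t\psi_{xt}\,\mathrm{d}x$ and $-\int_{\mathcal I}\psi_t\varphi_{xt}\,\mathrm{d}x$ sum to $-\int_{\mathcal I}(\varphi_t\psi_t)_x\,\mathrm{d}x=0$, leaving the dissipation $\int_{\mathcal I}(\bar u^{-1}\varphi_{xt}^2+D\bar v^{-1}\psi_{xt}^2+\tfrac12\bar u\bar v^{-1}\psi_t^2)\,\mathrm{d}x$ against the nonlinear remainders coming from $(\varphi_x\psi_x)_t$ and $(\varphi_x\psi)_t$.

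The nonlinear remainders are the crux. Those carrying a factor $\varphi_x$ or $\psi$ are harmless, since $\|\varphi_x\|_{L^\infty}$ and $\|\psi\|_{L^\infty}$ are small by \eqref{small-implica} once $\delta$ is small relative to $\sigma$. The dangerous term is $\int_{\mathcal I}\bar u^{-1}\varphi_t\varphi_{xt}\psi_x\,\mathrm{d}x$, whose only bad factor $\|\psi_x\|_{L^\infty}$ is merely bounded, of size $\sigma$; rather than absorbing it as a Gronwall coefficient — whose time integral is only of order $\sigma^2$ — I would integrate by parts onto $\varphi_t^2$, apply \eqref{sobolve-l-inty-zero}, bound it by $\varepsilon\|\varphi_{xt}\|_{L^2}^2+C_\varepsilon\|\psi_{xx}\|_{L^2}^{4/3}\|\varphi_t\|_{L^2}^2$, and then integrate in time: since $\int_0^t\|\varphi_\tau\|_{L^2}^2\,\mathrm{d}\tau\le C(\|\varphi_0\|_{H^1}^2+\|\psi_0\|_{L^2}^2)$ by \eqref{vfi-x-diff} (one likewise uses $\int_0^t\|\psi_x\|_{L^2}^2\le C(\|\varphi_0\|_{H^1}^2+\|\psi_0\|_{L^2}^2)$ from \eqref{con-1-l2} for the $(\varphi_x\psi)_t$ piece) while $\|\psi_{xx}\|_{L^2}\le2\sigma$ by \eqref{aprori-assum}, its time integral is $\le\varepsilon\int_0^t\|\varphi_{x\tau}\|_{L^2}^2\,\mathrm{d}\tau+C_\varepsilon\sigma^{4/3}(\|\varphi_0\|_{H^1}^2+\|\psi_0\|_{L^2}^2)$ — a fixed power of $\sigma$ times the small low–order data. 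Using $\|\varphi_t(0)\|_{L^2}+\|\psi_t(0)\|_{L^2}\le C(\|\varphi_0\|_{H^2}+\|\psi_0\|_{H^2})$, obtained by evaluating \eqref{vfi-psi-eq} at $t=0$, this yields $\|\varphi_t(t)\|_{L^2}^2+\|\psi_t(t)\|_{L^2}^2+\int_0^t(\|\varphi_{x\tau}\|_{L^2}^2+\|\psi_{x\tau}\|_{L^2}^2+\|\psi_\tau\|_{L^2}^2)\,\mathrm{d}\tau\le C(\|\varphi_0\|_{H^2}^2+\|\psi_0\|_{H^2}^2)+C\sigma^{4/3}(\|\varphi_0\|_{H^1}^2+\|\psi_0\|_{L^2}^2)$.

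Finally, testing the $\psi$–equation of \eqref{vfi-psi-eq} with $-\psi_{xx}$ (where only $\bar v$, not $\bar v_x$, appears) gives $\|\psi_x(t)\|_{L^2}^2+D\int_0^t\|\psi_{xx}\|_{L^2}^2\,\mathrm{d}\tau\le C(\|\psi_{0x}\|_{L^2}^2+\|\varphi_0\|_{H^1}^2+\|\psi_0\|_{L^2}^2)$. Writing $\varphi_{xx}=\varphi_t+\bar v_x\varphi_x+\bar u\psi_x+\varphi_x\psi_x$ and $D\psi_{xx}=\psi_t+\bar u\psi+\bar v\varphi_x+\varphi_x\psi$ and invoking the previous estimate together with \eqref{small-implica}, \eqref{bd-stat}, \eqref{stat-vx-bd}, one bounds $\|\varphi_{xx}(t)\|_{L^2}^2+\|\psi_x(t)\|_{H^1}^2$ by $C(\|\varphi_0\|_{H^2}^2+\|\psi_0\|_{H^2}^2)+C\sigma^{4/3}(\|\varphi_0\|_{H^1}^2+\|\psi_0\|_{L^2}^2)+C(\|\varphi_0\|_{H^1}^2+\|\psi_0\|_{L^2}^2)$; choosing $\sigma$ as in \eqref{M1-defi} so that the first term is $\le\sigma^2$ (and $\sigma\ge1$, say), and then $\delta$ and $\|\varphi_0\|_{H^1}^2+\|\psi_0\|_{L^2}^2$ small enough, gives \eqref{bd-assup-closure}. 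The bound \eqref{hig-multi} follows by collecting $\int_0^t(\|\varphi_x\|^2+\|\psi_x\|^2)$ from \eqref{con-1-l2}, $\int_0^t(\|\varphi_\tau\|^2+\|\varphi_{x\tau}\|^2)$ from \eqref{vfi-x-diff} and the $t$–differentiated estimate, $\int_0^t\|\psi_{xx}\|^2$ from the $-\psi_{xx}$ estimate, $\int_0^t(\|\psi_\tau\|^2+\|\psi_{x\tau}\|^2)$ from the $t$–differentiated estimate, and $\int_0^t\|\varphi_{xx}\|^2$ from the representation of $\varphi_{xx}$. I expect the main obstacle to be precisely the handling of the nonlinear remainders: the borderline term $\int_{\mathcal I}\bar u^{-1}\varphi_t\varphi_{xt}\psi_x$ must be kept at most a fixed power of $\sigma$ times the small low–order data — which forces one to lean on the smallness of $\int_0^t\|\varphi_\tau\|_{L^2}^2$ and $\int_0^t\|\psi_x\|_{L^2}^2$ rather than on any smallness of the $H^2$–norm of the initial data — and the weighted test functions $\varphi_t/\bar u,\psi_t/\bar v$ are exactly what prevents the $D^{-1/2}$ size of $\bar v_x$ from entering the estimate.
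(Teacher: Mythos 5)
Your proposal is correct and shares the paper's overall architecture -- differentiate \eqref{vfi-psi-eq} in $t$, run an energy estimate whose large coefficients are defeated by the smallness of the time-integrated low-order quantities, recover $\varphi_{xx}$ and $\psi_{xx}$ pointwise in time from the equations, and fix $\sigma$ by \eqref{M1-defi} -- but it differs in two technical choices worth noting. First, you test the $t$-differentiated system with the weighted multipliers $\varphi_t/\bar u$ and $\psi_t/\bar v$, so that the linear coupling terms cancel exactly via $-\int_{\mathcal I}(\varphi_t\psi_t)_x\,\mathrm{d}x=0$ and the $\bar v_x$-terms are annihilated as in Lemma \ref{lem-l2-esti}; the paper instead uses the plain multipliers $\varphi_t,\psi_t$ in \eqref{H2-diff} and disposes of these terms by Cauchy--Schwarz in \eqref{h2-linear}, absorbing $\varepsilon\|\psi_{xt}\|_{L^2}^2$ into the $D\|\psi_{xt}\|_{L^2}^2$ dissipation. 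Since this section works at fixed $D>0$ and all constants are allowed to depend on $D$ (and on $\|\bar v_x\|_{L^\infty}$, which is finite by \eqref{stat-vx-bd}), your concern about $D$-uniformity is not needed for the lemma as stated, but your variant is cleaner and would be the right move if one wanted $D$-independent bounds. Second, for the borderline term $\int_{\mathcal I}\varphi_t\varphi_{xt}\psi_x\,\mathrm{d}x$ you integrate by parts onto $\varphi_t^2$ (paying $\|\psi_{xx}\|_{L^2}\le 2\sigma$ from \eqref{aprori-assum} and Young with exponent $4/3$), whereas the paper in \eqref{h-2-last} interpolates $\|\varphi_t\|_{L^\infty}$ directly and ends with the coefficient $\|\psi_x\|_{L^2}^4$; both versions rest on the same essential observation, which you state explicitly, that the large coefficient multiplies $\int_0^t\|\varphi_\tau\|_{L^2}^2\,\mathrm{d}\tau$, controlled by $C(\|\varphi_0\|_{H^1}^2+\|\psi_0\|_{L^2}^2)$ via \eqref{con-1-l2} and \eqref{vfi-t-multi}, so only a fixed power of $\sigma$ times small low-order data survives. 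Your use of $-\psi_{xx}$ as a test function for the $\psi$-equation in place of the paper's $\psi_t$ is an equivalent route to $\|\psi_x(t)\|_{L^2}$ and $\int_0^t\|\psi_{xx}\|_{L^2}^2\,\mathrm{d}\tau$ (the paper recovers the latter from the equation as in \eqref{psi-x-es} and the subsequent identities). The final bookkeeping for \eqref{bd-assup-closure} and \eqref{hig-multi} matches the paper's.
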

\begin{proof}
Let us begin with the estimate on $ \psi _{x} $. Multiplying the second equation in \eqref{vfi-psi-eq} by $\psi _{t} $ followed by an integration with respect to $ x $, we get
\begin{align}\label{psix-esti}
& \displaystyle  \frac{1}{2}\frac{\mathrm{d}}{\mathrm{d}t}\int _{\mathcal{I} }\left( D  \psi _{x}^{2}+ \bar{u}\psi ^{2} \right) \mathrm{d}x+
\int _{\mathcal{I} }\psi _{t}^{2} \mathrm{d}x =- \int _{\mathcal{I} }\psi _{t} \varphi _{x} \psi\mathrm{d}x - \int _{\mathcal{I} } \bar{v}\varphi _{x} \psi _{t} \mathrm{d}x,
 \end{align}
 where, thanks to \eqref{sobolve-l-inty-zero}, \eqref{aprori-assum} and the Cauchy-Schwarz inequality, the terms on the right hand side can be estimated as follows:
  \begin{align*}
  \displaystyle  - \int _{\mathcal{I} }\psi _{t} \varphi _{x} \psi\mathrm{d}x & \leq C \|\psi\|_{L ^{\infty}}\|\psi _{t}\|_{L ^{2}}\|\varphi _{x}\|_{L ^{2}} \leq C \|\psi _{x}\|_{L ^{2}}\|\psi _{
  t}\|_{L ^{2}}\|\varphi _{x}\|_{L ^{2}}\leq C \delta \left(  \|\psi _{t}\|_{L ^{2}} ^{2}+\|\psi _{x}\|_{L ^{2}} ^{2}\right),
   \nonumber \\
   \displaystyle   - \int _{\mathcal{I} } \bar{v}\varphi _{x} \psi _{t} \mathrm{d}x & \leq \|\bar{v}\|_{L ^{\infty}} \|\varphi _{x}\|_{L ^{2}}\|\psi _{t}\|_{L ^{2}}\leq \varepsilon \|\psi _{t}\|_{L ^{2}}+C _{ \varepsilon}\|\varphi _{x}\|_{L ^{2}}^{2}
  \end{align*}
 for any $ \varepsilon >0$. We thus update \eqref{psix-esti}, after taking $ \varepsilon $ and $ \delta $ suitably small, as
 \begin{align*}
 \displaystyle  \frac{\mathrm{d}}{\mathrm{d}t}\int _{\mathcal{I} }\left( D  \psi _{x}^{2}+ \bar{u}\psi ^{2} \right) \mathrm{d}x+
\int _{\mathcal{I} }\psi _{t}^{2} \mathrm{d}x \leq C \left( \|\varphi _{x}\|_{L ^{2}}^{2}+ \|\psi
 _{x}\|_{L ^{2}}^{2} \right) .
 \end{align*}
This along with \eqref{bd-stat} and \eqref{con-1-l2} implies for any $ t \in [0,T] $ that
\begin{gather}\label{psi-x-es}
\displaystyle \|\psi(\cdot,t) \|_{H^{1}}^{2}+ \int _{0}^{t}\|\psi _{\tau}\|_{L ^{2}}^{2}\mathrm{d}\tau \leq C\|\psi _{0x}\|_{L ^{2}}^{2}+C \left(\|\varphi _{0}\|_{L ^{2}}^{2}+\|\psi _{0}\|_{L ^{2}}^{2} \right).
\end{gather}

To complete the proof, it now remains to derive the $ H ^{2} $-estimates. Differentiating \eqref{vfi-psi-eq} with respect to $ t $, we get
\begin{gather}\label{per-eq-t}
\displaystyle \begin{cases}
	\displaystyle   \varphi _{tt}= \varphi _{xxt}- \bar{v}_{x} \varphi _{xt}- \bar{u} \psi _{xt}- \varphi _{xt} \psi _{x}- \varphi _{x} \psi _{xt},\\[1mm]
 \displaystyle  \psi _{tt}=D  \psi _{xxt}- \bar{u} \psi _{t}- \bar{v} \varphi_{xt}-  \varphi _{xt} \psi-  \varphi _{x} \psi _{t}.
\end{cases}
\end{gather}
Multiplying the first equation in \eqref{per-eq-t} by $  \varphi _{t} $ and the second one by $ \psi _{t} $, integrating the resulting equation over $ \mathcal{I} $, we get
 \begin{align}\label{H2-diff}
 &\displaystyle \frac{1}{2}\frac{\mathrm{d}}{\mathrm{d}t} \int _{\mathcal{I} } \left( \varphi _{t}^{2}+\psi _{t}^{2} \right)  \mathrm{d}x + \int _{\mathcal{I} } \left( \varphi _{xt}^{2}+D\psi _{xt}^{2} +  \bar{u}\psi _{t}^{2}\right)  \mathrm{d}x
  \nonumber \\
  &~\displaystyle= - \int _{\mathcal{I} } \bar{v}_{x} \varphi _{xt} \varphi _{t}\mathrm{d}x-  \int _{\mathcal{I} }\bar{u} \psi _{xt} \varphi _{t} \mathrm{d}x-  \int _{\mathcal{I} }\bar{v} \varphi _{xt} \psi _{t} \mathrm{d}x -  \int _{\mathcal{I} } \varphi _{x} \psi _{xt} \varphi _{t} \mathrm{d}x
   \nonumber \\
   &~ \displaystyle \quad  -  \int _{\mathcal{I} } \varphi _{xt} \psi  \psi _{t} \mathrm{d}x -  \int _{\mathcal{I} } \varphi _{x} \psi _{t}^{2} \mathrm{d}x-  \int _{\mathcal{I} } \varphi _{xt} \psi _{x} \varphi _{t} \mathrm{d}x  .
 \end{align}
 We now estimate the terms on the right hand side of \eqref{H2-diff}. By \eqref{bd-stat}, \eqref{stat-vx-bd} and the Cauchy-Schwarz inequality, we have
 	\begin{align}\label{h2-linear}
 	\displaystyle - \int _{\mathcal{I} } \bar{v}_{x} \varphi _{xt} \varphi _{t}\mathrm{d}x-  \int _{\mathcal{I} }\bar{u} \psi _{xt} \varphi _{t} \mathrm{d}x - \int _{\mathcal{I} } \bar{v}\varphi _{xt} \psi _{t}\mathrm{d}x\leq \varepsilon \int _{\mathcal{I}}\left(\psi _{xt}^{2}+ \varphi _{xt}^{2} \right) \mathrm{d}x+C _{\varepsilon}\int _{\mathcal{I}}\left( \varphi _{t}^{2}+\psi _{t}^{2} \right)  \mathrm{d}x
 	\end{align}
 	for any $ \varepsilon>0 $. Thanks to \eqref{aprori-assum}, \eqref{small-implica} and the Cauchy-Schwarz inequality, we derive for $ 0<\delta \leq 1 $ that
 	\begin{align}\label{h-2-nonline}
 	&\displaystyle  -  \int _{\mathcal{I} } \varphi _{x} \psi _{xt} \varphi _{t} \mathrm{d}x  -  \int _{\mathcal{I} } \varphi _{xt} \psi  \psi _{t} \mathrm{d}x -  \int _{\mathcal{I} } \varphi _{x} \psi _{t}^{2} \mathrm{d}x
 	 \nonumber \\
 	 &~\displaystyle \leq C \|\varphi _{x}\|_{L ^{\infty}}\|\psi _{xt}\|_{L  ^{2}}\|\varphi _{t}\|_{L ^{2}}+\|\psi\|_{L ^{\infty}}\|\varphi _{xt}\|_{L ^{2}}\|\psi _{t}\|_{L ^{2}}+C \|\varphi _{x}\|_{L ^{2}}\|\psi
 	  _{t}\|_{L ^{2}}^{2}
 	   \nonumber \\
 	   &~ \displaystyle \leq  C \Big( \delta+ \delta ^{\frac{1}{2}}\sigma ^{\frac{1}{2}}\Big ) \|\psi _{xt}\|_{L  ^{2}}\|\varphi _{t}\|_{L ^{2}}  + C \delta ^{\frac{1}{2}}\sigma^{\frac{1}{2}}\|\varphi _{xt}\|_{L ^{2}}\|\psi _{t}\|_{L ^{2}} + C \delta \|\psi_{t}\|_{L ^{2}}^{2}
 	   \nonumber \\
 	   &~\displaystyle \leq C \delta ^{\frac{1}{2}} \left( \|\psi _{xt}\|_{L ^{2}} ^{2}+\|\varphi _{xt}\|_{L ^{2}}^{2}\right)+C \Big(  \delta + \delta ^{\frac{1}{2}}\sigma \Big)\left(\| \varphi _{t}\|_{L ^{2}}^{2}+\|\psi _{t}\|_{L ^{2}}^{2} \right).
  	\end{align}
  	For the last term on the right hand side of \eqref{H2-diff}, we get from \eqref{sobolve-l-inty-zero} and Young's inequality that
  	\begin{align}\label{h-2-last}
  	\displaystyle  -  \int _{\mathcal{I} } \varphi _{xt} \psi _{x} \varphi _{t} \mathrm{d}x &
  	\leq C \|\varphi _{xt}\|_{L ^{2}}^{\frac{3}{2}}\|\varphi _{t}\|_{L ^{2}}^{\frac{1}{2}}\|\psi _{x}\|_{L ^{2}}
  	\leq  \varepsilon \|\varphi _{xt}\|_{L ^{2}}^{2}  +C _{\varepsilon}  \|\varphi _{t}\|_{L ^{2}}^{2}\|\psi _{x}\|_{L ^{2}}^{4}
  	\end{align}
  	for any $ \varepsilon>0 $, where we have used the fact $ \varphi _{t}(0,t)=\varphi _{t}(1,t)=0 $ due to \eqref{bd-con-refor}. Substituting \eqref{h2-linear}--\eqref{h-2-last} into \eqref{H2-diff} and then taking $ \varepsilon $ and $ \delta $ small enough, we get that
  	\begin{align}\label{vfi-t-pis-t}
  	&\displaystyle   \frac{1}{2}\frac{\mathrm{d}}{\mathrm{d}t} \int _{\mathcal{I} } \left( \varphi _{t}^{2}+\psi _{t}^{2} \right)  \mathrm{d}x + \frac{1}{2}\int _{\mathcal{I} } \left( \varphi _{xt}^{2}+D\psi _{xt}^{2} +  \bar{u}\psi _{t}^{2}\right)  \mathrm{d}x
  	 \nonumber \\
  	 & ~\displaystyle \leq C \Big(1+\delta ^{\frac{1}{2}}\sigma \Big)\int _{\mathcal{I}}\left( \varphi _{t}^{2}+\psi _{t}^{2} \right)  \mathrm{d}x+\|\varphi _{t}\|_{L ^{2}}^{2}\|\psi _{x}\|_{L ^{2}}^{4}.
  	\end{align}
  	Recalling \eqref{con-1-l2} and \eqref{vfi-x}, we have
\begin{align}\label{vfi-t-multi}
\displaystyle  \int _{\mathcal{I} }\varphi _{x}^{2} \mathrm{d}x+ \int _{0}^{t}\int _{\mathcal{I} } \varphi _{\tau}^{2} \mathrm{d}x \mathrm{d}\tau\leq \left\|\varphi _{0x}\right\|_{L ^{2}}^{2}+ C \int _{0}^{t}\|\psi _{x}\|_{L ^{2}}^{2}\mathrm{d}\tau \leq C \left(\|\varphi _{0}\|_{H ^{1}}^{2}+\|\psi _{0}\|_{L ^{2}}^{2}\right),
\end{align}
  	provided $ \delta $ is small enough and \eqref{DE-M1-const} holds. Combining \eqref{psi-x-es} with \eqref{vfi-t-pis-t} and \eqref{vfi-t-multi}, we then get
  	\begin{align}\label{vfi-t-con}
  	 &\displaystyle   \int _{\mathcal{I} } \left( \varphi _{t}^{2}+\psi _{t}^{2} \right)  \mathrm{d}x +  \int _{0}^{t}\int _{\mathcal{I} } \left( \varphi _{x \tau}^{2}+\psi _{x \tau}^{2} +  \bar{u}\psi _{\tau}^{2}\right)  \mathrm{d}x \mathrm{d}\tau \nonumber \\
  	   	&~\displaystyle \leq C \int _{\mathcal{I}}\left( \varphi _{0xx}^{2}+\varphi _{0x}^{2}+\psi _{0xx}^{2}+\psi _{0x}^{2} \right)\mathrm{d}x+\Big(\sup _{0 \leq \tau \leq t}\|\psi _{x}\|_{L ^{2}}^{4}\Big)\int _{0}^{t}\int _{\mathcal{I}}\varphi _{\tau}^{2}\mathrm{d}x\mathrm{d}\tau
  	   	 \nonumber \\
  	   	 &~\displaystyle \quad +C \Big(1+\delta ^{\frac{1}{2}}\sigma \Big)\int _{0}^{t}\int _{\mathcal{I}}\left( \varphi _{\tau}^{2}+\psi _{\tau}^{2} \right)  \mathrm{d}x \mathrm{d}\tau
  	   	 \nonumber \\
  	   	 &~\displaystyle \leq C \left( \|\varphi _{0xx}\|_{L ^{2}}^{2}+\|\psi _{0x}\|_{H^{1}}^{2} +\|\varphi _{0x}\|_{L ^{2}}^{2}\right) +
  	   	 C \Big( 1+\delta ^{\frac{1}{2}}\sigma\Big) \left(  \|\varphi _{0}\|_{L ^{2}}^{2}+\|\psi _{0}\|_{L ^{2}}^{2} \right)
  	   	 \nonumber \\
  	   	 &~ \displaystyle \quad +C \delta ^{\frac{1}{2}}\sigma\|\psi _{0x}\|_{L ^{2}}^{2}+ C \left(\|\varphi _{0}\|_{H ^{1}}^{2}+\|\psi _{0}\|_{L ^{2}}^{2}\right) \left(\|\varphi _{0}\|_{L ^{2}}^{2}+\|\psi _{0}\|_{H ^{1}}^{2}\right)^{2}
  	\end{align}
  	for any $ t \in [0,T] $, where $ \varphi _{t}\vert_{t=0}= \varphi _{0xx}-  \varphi _{0x}\bar{v}_{x}-\bar{u}\psi _{0x}- \varphi _{0x}\psi _{0x} $ and $ \psi _{t}\vert_{t=0}=D \psi _{0xx}-\bar{u}\psi _{0}- \bar{v}\varphi _{0x} -  \varphi _{0x} \psi _{0} $  from \eqref{vfi-psi-eq} have been used. From \eqref{sobole}, \eqref{bd-stat}, \eqref{stat-vx-bd}, the first equation in \eqref{vfi-psi-eq} and the Cauchy-Schwarz inequality, we have
  	\begin{align*}
  	\displaystyle  \displaystyle \int _{\mathcal{I}}\varphi _{xx}^{2}\mathrm{d}x &\leq C \int _{\mathcal{I}}\left( \varphi _{t}^{2}+\varphi _{x}^{2}+\psi _{x}^{2} \right)\mathrm{d}x+\|\varphi _{x}\|_{L ^{\infty}}^{2}\int _{\mathcal{I}}\psi _{x}^{2}\mathrm{d}x
  	 \nonumber \\
  	 &\displaystyle \leq C \int _{\mathcal{I}}\left( \varphi _{t}^{2}+\varphi _{x}^{2}+\psi _{x}^{2} \right)\mathrm{d}x+ C \left( \|\varphi _{x}\|_{L ^{2}}\|\varphi _{xx}\|_{L ^{2}}+\|\varphi _{x}\|_{L ^{2}}^{2} \right)   \int _{\mathcal{I}}\psi _{x}^{2}\mathrm{d}x
  	  \nonumber \\
  	  &\displaystyle \leq C \int _{\mathcal{I}}\left( \varphi _{t}^{2}+\varphi _{x}^{2}+\psi _{x}^{2} \right)\mathrm{d}x+ \frac{1}{2}\|\varphi _{xx}\|_{L ^{2}}^{2}+C\|\varphi _{x}\|_{L ^{2}}^{2}\left( \|\psi _{x}\|_{L ^{2}}^{2}+\|\psi _{x}\|_{L ^{2}} ^{4}\right),
  	\end{align*}
  	and thus
  	\begin{align*}
  	\displaystyle  \int _{\mathcal{I}}\varphi _{xx}^{2}\mathrm{d}x \leq C \int _{\mathcal{I}}\left( \varphi _{t}^{2}+\varphi _{x}^{2}+\psi _{x}^{2} \right)\mathrm{d}x+C\|\varphi _{x}\|_{L ^{2}}^{2}\left( \|\psi _{x}\|_{L ^{2}}^{2}+\|\psi _{x}\|_{L ^{2}} ^{4}\right).
  	\end{align*}
  	This together with \eqref{con-1-l2}, \eqref{psi-x-es}, \eqref{vfi-t-multi} and \eqref{vfi-t-con} yields that
  	\begin{align}\label{vfi-xx}
  	\displaystyle  \int _{\mathcal{I}}\varphi _{xx}^{2}\mathrm{d}x &\leq C \left( \|\varphi _{0xx}\|_{L ^{2}}^{2}+\|\psi _{0x}\|_{H^{1}}^{2} +\|\varphi _{0x}\|_{L ^{2}}^{2}\right) +
  	   	 C \Big( 1+\delta ^{\frac{1}{2}}\sigma  \Big) \left(  \|\varphi _{0}\|_{L ^{2}}^{2}+\|\psi _{0}\|_{L ^{2}}^{2} \right)
  	   	 \nonumber \\
  	   	 &~ \displaystyle \quad +C \delta ^{\frac{1}{2}}\sigma\|\psi _{0x}\|_{L ^{2}}^{2}+ C \left(\|\varphi _{0}\|_{L ^{2}}^{2}+\|\psi _{0}\|_{L ^{2}}^{2}\right) \left( \|\varphi _{0}\|_{L ^{2}}^{2}+\|\psi _{0}\|_{H ^{1}}^{2}\right)^{2},
  	\end{align}
  	and that
  	\begin{align}\label{multi-vfi-xx}
  	\displaystyle \int _{0}^{t}\int _{\mathcal{I}}\varphi _{xx}^{2}\mathrm{d}x \mathrm{d}\tau  & \leq C \int _{0}^{t}\int _{\mathcal{I} }(\varphi _{\tau}^{2}+\varphi _{x}^{2}+\psi _{x}^{2}) \mathrm{d}x \mathrm{d}\tau+\sup _{0 \leq \tau \leq t}\left( \|\psi _{x}\|_{L ^{2}}^{2}+\|\psi _{x}\|_{L ^{2}}^{4} \right)\int _{0}^{t}\|\varphi _{x}\|_{L ^{2}}^{2}\mathrm{d}\tau
  	 \nonumber \\
  	 &\displaystyle \leq C \left(\|\varphi _{0}\|_{ H ^{1}}^{2}+\left\|\psi _{0}\right\|_{L ^{2}}\right)+\left( \|\varphi _{0}\|_{L ^{2}}^{2}+\|\psi _{0}\|_{L ^{2}}^{2} \right) \left( \|\psi _{0}\|_{H ^{1}}^{2}+\|\varphi _{0}\|_{L ^{2}}^{2} \right)^{2}
  	 \nonumber \\
  	 &\displaystyle\leq C \left(\|\varphi _{0}\|_{H ^{1}}^{2}+\|\psi _{0}\|_{L ^{2}}^{2}\right) \left( 1+\|\varphi _{0}\|_{L ^{2}}^{2}+\|\psi _{0}\|_{H ^{1}}^{2}\right)^{2}.
  	\end{align}
  	where \eqref{DE-M1-const} has been used. Similarly, recalling \eqref{sobolve-l-inty-zero}, \eqref{bd-con-refor}, \eqref{decay-l2}, \eqref{psi-x-es}, \eqref{vfi-t-multi}, \eqref{vfi-t-con} and the second equation in \eqref{vfi-psi-eq}, we have
  	\begin{align}\label{psi-xx}
  	\displaystyle  \int _{\mathcal{I}}\psi _{xx}^{2}\mathrm{d}x & \leq C \int _{\mathcal{I}}\left( \psi _{t}^{2}+\psi ^{2}+\varphi _{x}^{2} \right) \mathrm{d}x + \|\psi\|_{L ^{\infty}}^{2}\int _{\mathcal{I}}\varphi _{x} ^{2}\mathrm{d}x
  	 \nonumber \\
  	 &\leq C \int _{\mathcal{I}}\left( \psi _{t}^{2}+\psi ^{2}+\varphi _{x}^{2} \right) \mathrm{d}x+ \|\psi\|_{ L ^{2}}\|\psi _{x}\|_{L ^{2}}\int _{\mathcal{I}}\varphi _{x} ^{2}\mathrm{d}x
  	  \nonumber \\
  	    	 & \leq C \left( \|\varphi _{0xx}\|_{L ^{2}}^{2}+\|\psi _{0x}\|_{H^{1}}^{2} +\|\varphi _{0x}\|_{L ^{2}}^{2}\right) +
  	   	 C \Big( 1+\delta ^{\frac{1}{2}}\sigma \Big) \left(  \|\varphi _{0}\|_{L ^{2}}^{2}+\|\psi _{0}\|_{L ^{2}}^{2} \right)
  	   	 \nonumber \\
  	   	 &~ \displaystyle \quad +C \delta ^{\frac{1}{2}}\sigma\|\psi _{0x}\|_{L ^{2}}^{2}+ C \left(\|\varphi _{0}\|_{H ^{1}}^{2}+\|\psi _{0}\|_{L ^{2}}^{2}\right) \left( \|\varphi _{0}\|_{L ^{2}}^{2}+\|\psi _{0}\|_{H ^{1}}^{2}\right)^{2}
  	\end{align}
  	and
  	\begin{align}\label{muti-psi-xx}
  	\displaystyle  \int _{0}^{t}\int _{\mathcal{I}}\psi _{xx}^{2}\mathrm{d}x \mathrm{d}\tau & \leq C \int _{0}^{t}\int _{\mathcal{I} }(\psi _{\tau}^{2}+\psi ^{2}+\varphi _{x}^{2}) \mathrm{d}x \mathrm{d}\tau+ \Big(\sup _{\tau \in[0,t]}\|\psi(\cdot,\tau)\|_{H ^{1}}^{2}\Big)\int _{0}^{t}\int _{\mathcal{I} }\varphi _{x}^{2} \mathrm{d}x \mathrm{d}\tau
  	 \nonumber \\
  	  & \leq
  	C \left( \|\psi _{0}\|_{H ^{1}}^{2}+\|\varphi _{0}\|_{L ^{2}}^{2} \right)+ C \left( \|\varphi _{0}\|_{L ^{2}}^{2}+\|\psi _{0}\|_{L ^{2}} ^{2}\right)\left( \|\psi _{0}\| _{H ^{1}}^{2}+\|\varphi _{0}\|_{L ^{2}}^{2}\right)
  	 \nonumber \\
  	 &\displaystyle    	
  	\leq C \left(1+ \|\varphi _{0}\|_{L ^{2}}^{2}+\|\psi _{0}\|_{L ^{2}}^{2} \right)\left(\|\varphi _{0}\|_{L ^{2}}^{2}+\|\psi _{0}\|_{H ^{1}}^{2} \right).
  	  	\end{align}
  	  	Combining \eqref{psi-x-es}, \eqref{vfi-xx} and \eqref{psi-xx}, we arrive at
  	  	\begin{align*}
  	  	&\displaystyle  \|\varphi _{xx}(\cdot,t)\|_{L ^{2}}^{2} + \|\psi _{x}(\cdot,t)\|_{H ^{1}}^{2} + \int _{0}^{t}\int _{\mathcal{I} } \left( \varphi _{x \tau}^{2}+\psi _{x \tau}^{2} +  \bar{u}\psi _{\tau}^{2}\right)  \mathrm{d}x \mathrm{d}\tau
  	  	 \nonumber \\
  	  	   	   	  &~\displaystyle \leq C \left( \|\varphi _{0xx}\|_{L ^{2}}^{2}+\|\psi _{0x}\|_{H^{1}}^{2} \right) +
  	   	 C \Big( 1+\delta ^{\frac{1}{2}}\sigma  \Big) \left(  \|\varphi _{0}\|_{H ^{1}}^{2}+\|\psi _{0}\|_{L ^{2}}^{2} \right)
  	   	 \nonumber \\
  	   	 &~ \displaystyle \quad +C \delta ^{\frac{1}{2}}\sigma\|\psi _{0x}\|_{L ^{2}}^{2}+ C \left(\|\varphi _{0}\|_{H ^{1}}^{2}+\|\psi _{0}\|_{L ^{2}}^{2}\right) \left( \|\varphi _{0}\|_{L ^{2}}^{2}+\|\psi _{0}\|_{H ^{1}}^{2}\right)^{2}.
  	  	\end{align*}
  	  	Consequently, if we take
  	  	\begin{align}\label{M1-defi}
  	  	\displaystyle \sigma ^{2} = \max \left\{ 4C \left( \|\varphi _{0xx}\|_{L ^{2}}^{2}+\|\psi _{0x}\|_{H^{1}}^{2} \right) ,1 \right\}
  	  	\end{align}
  	  	 and set both $ \delta $ and $  \|\varphi _{0}\|_{H ^{1}}^{2}+\left\|\psi _{0}\right\|_{L ^{2}}^{2} $ small enough such that \eqref{DE-M1-const} and
  	  	 \begin{align}\label{deta-initial-data-constrian}
  	  	 \displaystyle
  	   	& C \Big( 1+\delta ^{\frac{1}{2}}\sigma \Big) \left(  \|\varphi _{0}\|_{H ^{1}}^{2}+\|\psi _{0}\|_{L ^{2}}^{2} \right)+C \delta ^{\frac{1}{2}}\sigma\|\psi _{0x}\|_{L ^{2}}^{2}
  	   	 \nonumber \\
  	   	 &~ \displaystyle \quad + C \left(\|\varphi _{0}\|_{H ^{1}}^{2}+\|\psi _{0}\|_{L ^{2}}^{2}\right) \left(\|\varphi _{0}\|_{L ^{2}}^{2}+\|\psi _{0}\|_{H ^{1}}^{2}\right)^{2} \leq 2 \sigma^{2}
  	  	 \end{align}
  	  	 are satisfied, then it holds that
  	  	 \begin{align}\label{vfi-xx-final}
  	  	 \displaystyle   \|\varphi _{xx}(\cdot,t)\|_{L ^{2}}^{2} + \|\psi _{x}(\cdot,t)\|_{H ^{1}}^{2} + \int _{0}^{t}\int _{\mathcal{I} } \left( \varphi _{x \tau}^{2}+\psi _{x \tau}^{2} +  \bar{u}\psi _{\tau}^{2}\right)  \mathrm{d}x \mathrm{d}\tau \leq \frac{9}{4}\sigma^{2}.
  	  	 \end{align}
  	  	 This gives \eqref{bd-assup-closure}. Differentiating the first equation in \eqref{vfi-psi-eq} with respect to $ x $ leads to
  	  	 \begin{align*}
  	  	 \displaystyle \varphi _{xxx}=\varphi _{xt}+\bar{v}_{xx}\varphi _{x}-\bar{v}_{x}\varphi _{x}- \bar{u}_{x}\psi _{x}-\bar{v}\psi _{xx}- \varphi _{xx}\psi _{x}-\varphi _{x}\psi _{xx},
  	  	 \end{align*}
  	  	 which in combination with \eqref{stat-problem}, \eqref{bd-stat}, \eqref{stat-vx-bd}, \eqref{con-1-l2}, \eqref{multi-vfi-xx}, \eqref{muti-psi-xx}, \eqref{vfi-xx-final} and the Sobolev inequality \eqref{sobole} yields that
  	  	 \begin{align}\label{multi-vfi-xxx}
  	  	 \displaystyle \int _{0}^{t}\int _{\mathcal{I}}\varphi _{xxx}^{2}\mathrm{d}x \mathrm{d}\tau \leq C \left(\|\varphi _{0}\|_{H ^{2}}^{2}+\|\psi _{0}\|_{H ^{2}}^{2}\right),
  	  	 \end{align}
  	  	 provided  $  \|\varphi _{0}\|_{H ^{1}}^{2}+\left\|\psi _{0}\right\|_{L ^{2}}^{2} $ is suitably small. Similarly, we utilize \eqref{stat-refor}, \eqref{bd-stat}, \eqref{stat-vx-bd}, \eqref{con-1-l2}, \eqref{vfi-xx} and \eqref{vfi-xx-final} to get
  	  	 \begin{align}\label{multi-psi-xxx}
  	  	 \displaystyle \int _{0}^{t}\int _{\mathcal{I}}\psi _{xxx}^{2}\mathrm{d}x \mathrm{d}\tau \leq C   \left(\|\varphi _{0}\|_{H ^{2}}^{2}+\|\psi _{0}\|_{H ^{2}}^{2}\right),
  	  	 \end{align}
  	  	 provided  $  \|\varphi _{0}\|_{H ^{1}}^{2}+\left\|\psi _{0}\right\|_{L ^{2}}^{2} $ is suitably small. Combining \eqref{multi-vfi-xx}, \eqref{muti-psi-xx}, \eqref{multi-vfi-xxx} and \eqref{multi-psi-xxx}, we then obtain \eqref{hig-multi} and finish the proof of Lemma \ref{lem-bd-closure}.  	
\end{proof}
       \noindent{\bf \emph{Proof of Proposition \ref{prop-aprioriesti}}.} According to Lemmas \ref{lem-stat}--\ref{lem-bd-closure}, to finish the proof of Proposition \ref{prop-aprioriesti}, it suffices to close the \emph{a priori} assumptions \eqref{aprori-assum}. To this end, we first fix $ \sigma $ by \eqref{M1-defi} and choose $ \delta $ and $  \|\varphi _{0}\|_{H ^{1}}^{2}+\left\|\psi _{0}\right\|_{L ^{2}}^{2} $ suitably small such that \eqref{deta-constrian-lem1}, \eqref{DE-M1-const} and  \eqref{deta-initial-data-constrian} hold. Then in view of \eqref{con-decay-l2} and \eqref{bd-assup-closure}, the \emph{a priori} assumption \eqref{aprori-assum} is closed provided that $  \|\varphi _{0}\|_{H ^{1}}^{2}+\left\|\psi _{0}\right\|_{L ^{2}}^{2} $ is small enough. The proof is completed.   \hfill $ \square $\\
\vspace{2mm}

       \subsection{Proof of Theorem \ref{thm-asymtoptic}} 
       \label{ssub:subsubsection_name}
       With the unique solution $ (\varphi,\psi) \in X(0,\infty) $ obtained in Proposition \ref{prop-global-refor} to the reformulated problem \eqref{vfi-psi-eq}--\eqref{bd-con-refor}, in view of \eqref{u-v-vfipsi}, we conclude that the initial-boundary value problem \eqref{1-D-model}, \eqref{bd-con-Dnon-0} admits a unique global solution $ (u,v) $ satisfying,
       \begin{gather*}
\displaystyle  u \in C([0,\infty); H ^{1})\cap L ^{2}(0,\infty;H ^{2}),\ \ v \in C([0,\infty);H ^{2})\cap L ^{2}(0,\infty;H ^{3}).
\end{gather*}
Furthermore, according to \eqref{pro-global-eti1}, we have
\begin{gather*}
\displaystyle \|(u- \bar{u}, v- \bar{v})(\cdot,t)\|_{L ^{2}}^{2} \leq C 		{\mathop{\mathrm{e}}}^{-\alpha _{1} t},\ \ \|(u _{x}- \bar{u}_{x}, \varphi _{x}-\bar{v}_{x})(\cdot,t)\|_{L ^{2}} \leq C \ \ \mbox{for any }t \geq 0,
\end{gather*}
where $ C>0 $ is independent of $ t $. This along with the Sobolev inequality \eqref{sobole} implies that
\begin{align*}
\displaystyle   \|(u- \bar{u}, v- \bar{v})(\cdot,t)\|_{L ^{\infty}} &\leq C \|(u- \bar{u}, v- \bar{v})(\cdot,t)\|_{L ^{2}}^{\frac{1}{2}}\|(u _{x}- \bar{u}_{x}, v _{x} - \bar{v} _{x})(\cdot,t)\|_{L ^{2}}^{\frac{1}{2}}
 \nonumber \\
 &\displaystyle \quad+C  \|(u- \bar{u}, v- \bar{v})(\cdot,t)\|_{L ^{2}}
  \nonumber \\
  &\leq C 		{\mathop{\mathrm{e}}}^{- \frac{\alpha _{1}}{4}t} \ \ \mbox{for any }t \geq 0.
\end{align*}
This gives \eqref{decay-in-thm} with $ \alpha= \frac{\alpha _{1}}{4} $. We thus finish the proof of Theorem \ref{thm-asymtoptic}.\hfill $ \square $

\vspace*{5mm}

\section{Asymptotic stability for the case \texorpdfstring{$ D=0 $}{D=0}}\label{sec-asymptotic-D0}
In this section, we are devoted to studying the large time behavior of solutions to the problem \eqref{1-D-model}, \eqref{bdcon-D-0} with $ D=0 $. As in the case $ D>0 $, the heart of the matter is to derive some uniform-in-time estimates on the solution.

\vspace{2mm}

\subsection{\emph{A priori} estimates} 
Now we consider the system \eqref{1-D-model} with $ D=0 $:
\begin{gather}\label{1-D-model-D-0}
 \displaystyle \begin{cases}
 \displaystyle  u _{t}=u _{xx}-(u v _{x})_{x}&\mbox{in}\ \ \mathcal{I},\\[1mm]
\displaystyle v _{t}= - uv &\mbox{in}\ \ \mathcal{I},
 \end{cases}
\end{gather}
subject to the following initial and boundary conditions
\begin{gather}\label{D-0-initial-bd}
\begin{cases}
\displaystyle (u, v)\vert _{t=0}=(u _{0},v _{0})(x),\ \ u _{0}\geq 0,\\
	\displaystyle  \displaystyle   (u _{x}-u v _{x})\vert _{x=0,1}=0.
\end{cases}
\end{gather}
We shall show that
\begin{gather*}
\displaystyle (u,v) \rightarrow (M,0)\ \ \mbox{in } L ^{\infty} \ \ \mbox{as}\ \ t \rightarrow +\infty,
\end{gather*}
where $ M= \int _{\mathcal{I}}u _{0}\mathrm{d}x $. To this end, we first reformulate the problem by defining
\begin{gather*}
 \displaystyle w(x,t)=\int _{0}^{x}(u(y,t)-M)\mathrm{d}y
 \end{gather*}
with
\begin{gather*}
\displaystyle w \vert _{t=0}= \int _{0}^{x}(u _{0} -M)\mathrm{d}y =:w _{0}(x),
\end{gather*}
which leads to the following problem in terms of $ (w,v) $:
\begin{gather}\label{D-0-reforu}
\displaystyle \begin{cases}
	\displaystyle w _{t}= w _{xx}-Mv _{x}- w _{x}v _{x},\\[1mm]
  \displaystyle v _{t}=-Mv -  w _{x} v,\\
  \displaystyle w(0,t)=w(1,t)=0,\\
\displaystyle (w,v)\vert _{t=0}=(w _{0},v _{0})(x).
\end{cases}
\end{gather}
Similar to the case $ D>0 $ studied in the previous section, one can prove the local existence of solutions to the initial-boundary value problem \eqref{D-0-reforu} in the following space
\begin{gather*}
\displaystyle X _{0}(0,T):=\left\{ (w,v)\vert\, w \in C([0,T];H _{0}^{1}\cap H ^{2}) \cap L ^{2}(0,T;H ^{3}),\, v \in C([0,T]; H ^{2})\right\}.
\end{gather*}
Precisely, we have the following local existence result.
\begin{proposition}\label{prop-local-D-0}
Assume $ w _{0}\in H _{0}^{1}\cap H ^{2} $ and $ v _{0}\in H ^{2} $ such that $ w _{0x}+M \geq 0 $ and $ v _{0} \geq 0 $. Then there exists a positive constant $  T _{\ast} $ depending on the initial data such that there exists a unique solution $ (w, v) \in X _{0}(0,T _{\ast}) $ to the problem \eqref{D-0-reforu} with $ \|w\|_{H ^{2}}^2 +\|v\|_{H ^{2}}^2 \leq 2 \left( \|w _{0}\|_{H ^{2}}^2 +\|v _{0}\|_{H ^{2}}^2  \right) $ and
\begin{gather*}
\displaystyle w _{x}+M \geq 0,\ \ v \geq 0
\end{gather*}
for any $ (x,t) \in \mathcal{I}\times [0,T _{\ast}) $.
\end{proposition}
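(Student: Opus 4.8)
The plan is to construct $(w,v)$ by a contraction-mapping argument, exploiting that when $D=0$ the $v$-equation in \eqref{D-0-reforu} is, pointwise in $x$, a linear ODE in $t$ which integrates explicitly. Fix $R>0$, to be chosen large depending only on $\|w_0\|_{H^2}$ and $\|v_0\|_{H^2}$, and for $T>0$ to be chosen small set
\begin{gather*}
\mathcal B_{R,T}:=\Big\{(\tilde w,\tilde v)\in X_0(0,T):\ (\tilde w,\tilde v)|_{t=0}=(w_0,v_0),\ \sup_{0\le t\le T}\big(\|\tilde w\|_{H^2}^2+\|\tilde v\|_{H^2}^2\big)+\int_0^T\|\tilde w\|_{H^3}^2\,\mathrm{d}\tau\le R^2\Big\}.
\end{gather*}
Given $(\tilde w,\tilde v)\in\mathcal B_{R,T}$, define
\begin{gather*}
v(x,t):=v_0(x)\,\exp\Big(-\int_0^t\big(M+\tilde w_x(x,s)\big)\,\mathrm{d}s\Big),
\end{gather*}
which solves $v_t=-(M+\tilde w_x)v$ with $v|_{t=0}=v_0$, and let $w$ solve the linear Dirichlet heat problem $w_t-w_{xx}=-(M+\tilde w_x)\tilde v_x$, $w|_{x=0,1}=0$, $w|_{t=0}=w_0$. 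A fixed point of the map $\mathcal T:(\tilde w,\tilde v)\mapsto(w,v)$ is precisely a solution of \eqref{D-0-reforu}.

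First I would check $\mathcal T(\mathcal B_{R,T})\subset\mathcal B_{R,T}$. For $v$, differentiating the explicit formula once and twice in $x$ and using the one-dimensional embedding $H^2\hookrightarrow C^1$ (so $\tilde w_x,\tilde v_x\in L^\infty$) together with $\tilde w_{xx}\in L^\infty(0,T;L^2)$, one gets $\sup_{[0,T]}\|v(\cdot,t)\|_{H^2}^2\le 2\|v_0\|_{H^2}^2$ once $T$ is small; the one point worth noting is that $\partial_x^2 v$ contains a term proportional to $v_0\,\mathrm{e}^{-\int_0^t(M+\tilde w_x)\mathrm{d}s}\!\int_0^t\tilde w_{xxx}\,\mathrm{d}s$, controlled by $\big\|\int_0^t\tilde w_{xxx}\,\mathrm{d}s\big\|_{L^2}\le\sqrt T\big(\int_0^T\|\tilde w\|_{H^3}^2\,\mathrm{d}\tau\big)^{1/2}\le\sqrt T\,R$, and this is exactly why a bound on $\int_0^T\|\tilde w\|_{H^3}^2\,\mathrm{d}\tau$ is included in $\mathcal B_{R,T}$. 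For $w$, standard parabolic estimates for the Dirichlet heat equation, together with $H^1\hookrightarrow L^\infty$ to control the source, give $w\in C([0,T];H^2)\cap L^2(0,T;H^3)$ with $\sup_{[0,T]}\|w\|_{H^2}^2+\int_0^T\|w\|_{H^3}^2\,\mathrm{d}\tau\le C\|w_0\|_{H^2}^2+CT(1+R)^4$; hence, taking first $R^2\ge 4C\big(1+\|w_0\|_{H^2}^2+\|v_0\|_{H^2}^2\big)$ and then $T$ small, the defining bound of $\mathcal B_{R,T}$ holds for $(w,v)$. The analogous estimates for the difference of the outputs of two inputs --- which cost one $x$-derivative, harmlessly since it appears under a time integral --- show that $\mathcal T$ is a contraction in $C([0,T];H^1)\times C([0,T];H^1)$ after shrinking $T$ once more; as $\mathcal B_{R,T}$ is closed in that weaker topology, Banach's fixed point theorem gives a unique $(w,v)\in\mathcal B_{R,T}\subset X_0(0,T)$ solving \eqref{D-0-reforu}. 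Finally, since $(w,v)\in C([0,T];H^2)$ with $(w,v)|_{t=0}=(w_0,v_0)$, the quantity $\|w\|_{H^2}^2+\|v\|_{H^2}^2$ tends to $\|w_0\|_{H^2}^2+\|v_0\|_{H^2}^2$ as $t\to0$, so replacing $T$ by a possibly smaller $T_\ast$ gives the asserted bound $\|w\|_{H^2}^2+\|v\|_{H^2}^2\le 2\big(\|w_0\|_{H^2}^2+\|v_0\|_{H^2}^2\big)$.

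It remains to establish positivity. That $v\ge0$ is immediate from the explicit formula, since $v_0\ge0$ and the exponential factor is strictly positive. For $u:=w_x+M$, I would pass to $z:=u\,\mathrm{e}^{-v}$: from the original equation $u_t=u_{xx}-(uv_x)_x$ with the zero-flux boundary condition, a direct computation gives $z_t=z_{xx}+v_x z_x+(M+w_x)v\,z$ with Neumann data $z_x|_{x=0,1}=0$ and $z|_{t=0}=u_0\mathrm{e}^{-v_0}\ge0$. Since $w_x\in L^\infty$ (as $w\in C([0,T_\ast];H^2)$) and $v\in L^\infty$, the coefficients of this equation are bounded, so testing with $z_-:=\min\{z,0\}$ and applying Gr\"onwall's inequality gives $z\ge0$ on $[0,T_\ast)$, whence $u=z\,\mathrm{e}^{v}\ge0$. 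The linear estimates, the difference estimates and the maximum-principle argument above are routine and parallel those behind Proposition \ref{prop-local}; I expect the only mildly delicate points to be the order of the quantifiers --- $R$ must be fixed from the data before $T_\ast$ is taken small, and the $w$-source estimate tolerates the loss of one $x$-derivative only because it sits under a time integral --- and the observation that the maximum-principle argument for $u$ must be run for $z=u\mathrm{e}^{-v}$ rather than $u$ itself, since in the $u$-equation the zeroth-order coefficient $v_{xx}$ is merely $L^2$ in $x$.
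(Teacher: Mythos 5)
The paper itself offers no proof of Proposition \ref{prop-local-D-0}: it is asserted by analogy with Proposition \ref{prop-local}, which in turn is justified only by a citation to standard fixed-point arguments. Your overall scheme --- solving the $v$-equation explicitly as a pointwise ODE, solving a linear Dirichlet heat problem for $w$, closing with Banach's fixed point theorem, and proving $w_x+M\ge 0$ via the transformed quantity $z=(w_x+M)\mathrm{e}^{-v}$ (whose equation $z_t=z_{xx}+v_xz_x+(M+w_x)vz$ with Neumann data I have checked) --- is the right one, and the positivity part of your argument is complete. However, there are two genuine gaps in the fixed-point part. First, the map $\mathcal T$ is not a contraction in $C([0,T];H^1)\times C([0,T];H^1)$ as you claim: writing $E_i=\exp(-\int_0^t(M+\tilde w_{ix})\,\mathrm{d}s)$, the $x$-derivative of $v_1-v_2=v_0(E_1-E_2)$ contains $v_0E_2\int_0^t(\tilde w_{1xx}-\tilde w_{2xx})\,\mathrm{d}s$, whose $L^2$ norm is controlled by the $H^2$ (not $H^1$) distance of the inputs; the time integral yields a factor of $T$ but does not recover the lost derivative. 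The standard repair is to enlarge the metric by the parabolic norm $\|\tilde w_1-\tilde w_2\|_{L^2(0,T;H^2)}$, which the smoothing estimate for the $w$-update (from an $L^2(0,T;L^2)$ source difference) does control; with that addition the contraction closes, but as written the estimate for the $v$-component fails.

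Second, and more seriously, the self-map bound $\int_0^T\|w\|_{H^3}^2\,\mathrm{d}\tau\le CT(1+R)^4+C\|w_0\|_{H^2}^2$ does not follow from ``standard parabolic estimates.'' The source $f=-(M+\tilde w_x)\tilde v_x$ lies in $C([0,T];H^1)$ but does not vanish at $x=0,1$ and, within $\mathcal B_{R,T}$, its boundary trace has no quantitative time regularity; for the Dirichlet heat equation a spatially smooth source whose boundary trace is merely bounded in time does \emph{not} in general produce a solution in $L^2(0,T;H^3)$ (roughly, $H^{1/4}$ regularity in time of $f|_{x=0,1}$ is needed, as one sees by testing with $f(x,t)=a(t)$ and computing $w_{xxx}=w_{xt}$). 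Since your control of $\partial_x^2v$ relies precisely on the $\int_0^T\|\tilde w\|_{H^3}^2$ component of the ball, the invariance $\mathcal T(\mathcal B_{R,T})\subset\mathcal B_{R,T}$ is not established. The usual fixes are either to carry time-derivative norms ($\|\tilde w_t\|_{C(L^2)\cap L^2(H^1)}$, $\|\tilde v_t\|_{C(H^1)}$) in the ball, using the compatibility $w_t|_{t=0}=w_{0xx}-(M+w_{0x})v_{0x}\in L^2$ so that the time-differentiated heat equation yields $w_{xxx}=w_{xt}-f_x\in L^2(0,T;L^2)$ --- this is exactly how the paper's own \emph{a priori} estimates obtain $\int_0^t\|\varphi_{xxx}\|_{L^2}^2$ in Lemma \ref{lem-bd-closure} --- or to observe that for an \emph{output} of the heat step one has the identity $\int_0^tw_{xxx}\,\mathrm{d}s=w_x(t)-w_{0x}-\int_0^tf_x\,\mathrm{d}s$, obtained by integrating the equation in time, which controls exactly the quantity entering $\partial_x^2v$ without any $L^2(0,T;H^3)$ bound. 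Either route repairs the argument, but one of them is needed; the proof as proposed does not close.
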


Next, we state the global existence result for the initial-boundary value problem \eqref{D-0-reforu}, from which we can obtain the global existence and large time behavior of the solution $ (u,v) $ to the problem \eqref{1-D-model-D-0}, \eqref{D-0-initial-bd}.
\begin{proposition}\label{prpo-global-D-0}
Let  $ w _{0}\in H _{0}^{1}\cap H ^{2} $ and $ v _{0}\in H ^{2} $ such that $ w _{0x}+M \geq 0 $ and $ v _{0} \geq 0 $. Then there exists a positive constant $ \delta _{\ast} $ such that if $ \|w _{0}\|_{H ^{1}}^{2}+\|v _{0}\|_{H ^{1}}^{2}\leq \delta _{\ast} $, the unique solution of the problem \eqref{D-0-reforu} obtained in Proposition \ref{prop-local-D-0} exists globally in time. Furthermore, it holds that
\begin{gather}\label{decay-global-w-v}
 \displaystyle   \|w _{xx}(\cdot,t)\|_{L ^{2}}^{2}+\|v _{xx}(\cdot,t)\|_{L ^{2}}^{2}+ \|w(\cdot,t)\|_{H ^{1}}^{2}+\|v (\cdot,t)\|_{H ^{1}}^{2} \leq C 		{\mathop{\mathrm{e}}}^{- \alpha _{2}t}\ \ \  t \geq 0,
 \end{gather}
where $ C>0 $ is a constant independent of $ t$.

 \end{proposition}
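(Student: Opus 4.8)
The plan is to mirror the scheme used for $D>0$: combine the local existence result of Proposition~\ref{prop-local-D-0} with uniform-in-time \emph{a priori} estimates and a continuation argument. I would argue under the \emph{a priori} assumption that, on a maximal interval $[0,T]$,
\[
\|w(\cdot,t)\|_{H^1}+\|v(\cdot,t)\|_{H^1}\le 2\delta,\qquad \|w_{xx}(\cdot,t)\|_{L^2}+\|v_{xx}(\cdot,t)\|_{L^2}\le 2\sigma,
\]
together with the positivity $w_x+M\ge 0$, $v\ge 0$ inherited from Proposition~\ref{prop-local-D-0}, where $\delta$ is to be taken small and $\sigma$ is to be fixed in terms of $\|w_{0xx}\|_{L^2}^2+\|v_{0xx}\|_{L^2}^2$. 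By \eqref{sobole}--\eqref{sobolve-l-inty-zero} this assumption gives $\|w\|_{L^\infty}\le C\delta$ and $\|w_x\|_{L^\infty}\le C(\delta^{1/2}\sigma^{1/2}+\delta)$, so $\|w_x\|_{L^\infty}$ is as small as needed once $\delta$ is small.

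The key structural point is that the $v$-equation is a linear ODE in $t$ with $x$ a parameter, so
\[
v(x,t)=v_0(x)\exp\!\Bigl(-Mt-\int_0^t w_x(x,s)\,\mathrm{d}s\Bigr),
\]
and since $\|w_x\|_{L^\infty}\le M/2$ we get $0\le v(x,t)\le v_0(x)\mathrm{e}^{-Mt/2}$, hence exponential decay of every $L^p$-norm of $v$. Differentiating this representation in $x$ expresses $v_x$ through $v_{0x}$ and $v_0\int_0^t w_{xx}\,\mathrm{d}s$, and $v_{xx}$ through $v_{0xx}$, $v_{0x}\int_0^t w_{xx}\,\mathrm{d}s$, $v_0(\int_0^t w_{xx}\,\mathrm{d}s)^2$ and $v_0\int_0^t w_{xxx}\,\mathrm{d}s$, all carrying the factor $\mathrm{e}^{-Mt-\int_0^t w_x}$. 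Since this factor beats any power of $t$ and $(\int_0^t\|w_{xx}\|_{L^2}\,\mathrm{d}s)^2\le t\int_0^t\|w_{xx}\|_{L^2}^2\,\mathrm{d}s$, the spatial derivatives of $v$ are controlled by $\|v_0\|_{H^2}$ and the dissipation integrals $\int_0^t(\|w_{xx}\|_{L^2}^2+\|w_{xxx}\|_{L^2}^2)\,\mathrm{d}s$ of $w$; this is exactly the mechanism that forces an $H^1$-smallness hypothesis on the data, stronger than in the case $D>0$.

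The estimates would then proceed in order. (i) Testing the $w$-equation with $w$, integrating, using $w(0,t)=w(1,t)=0$ to rewrite $-M\int_{\mathcal I}w v_x\,\mathrm{d}x=M\int_{\mathcal I}w_x v\,\mathrm{d}x$, absorbing $-\int_{\mathcal I}w w_x v_x\,\mathrm{d}x$ with the smallness of $\|w_x\|_{L^\infty}$ (and Poincar\'e), and inserting $\|v\|_{L^2}\le C\mathrm{e}^{-Mt/2}$, gives $\frac{\mathrm d}{\mathrm dt}\|w\|_{L^2}^2+\|w_x\|_{L^2}^2\le C\mathrm{e}^{-Mt}$; the Poincar\'e inequality $\|w\|_{L^2}\le C\|w_x\|_{L^2}$ then yields exponential decay of $\|w\|_{L^2}^2$ and $\int_0^t\|w_x\|_{L^2}^2\,\mathrm{d}s\le C(\|w_0\|_{L^2}^2+\|v_0\|_{L^2}^2)$. (ii) Testing with $-w_{xx}$ (equivalently $w_t$) gives $\frac{\mathrm d}{\mathrm dt}\|w_x\|_{L^2}^2+\|w_{xx}\|_{L^2}^2\le C\|v_x\|_{L^2}^2+C(\delta^{1/2}\sigma^{1/2}+\delta)(\|w_x\|_{L^2}^2+\|w_{xx}\|_{L^2}^2)$; inserting the bound for $\|v_x\|_{L^2}^2$ from the representation formula (its leading part absorbed since it carries a small coefficient times $\int_0^t\|w_{xx}\|_{L^2}^2\,\mathrm{d}s$) and combining with (i) yields $\|w_x(\cdot,t)\|_{L^2}^2+\int_0^t\|w_{xx}\|_{L^2}^2\,\mathrm{d}s\le C(\|w_0\|_{H^1}^2+\|v_0\|_{H^1}^2)$ and its exponential decay. (iii) Differentiating the $w$-equation in $t$ and testing with $w_t$, together with the relation $w_{xxx}=w_{xt}+(\text{terms in }v_x,v_{xx},w_{xx})$ obtained by differentiating the $w$-equation in $x$, gives $\|w_{xx}(\cdot,t)\|_{L^2}^2\le C\mathrm{e}^{-\alpha t}$ and $\int_0^t(\|w_{xxx}\|_{L^2}^2+\|w_t\|_{H^1}^2)\,\mathrm{d}s\le C(\|w_0\|_{H^2}^2+\|v_0\|_{H^2}^2)$, exactly as in the case $D>0$. (iv) Feeding the dissipation integrals of $w$ into the representation formulas for $v_x$ and $v_{xx}$ yields $\|v(\cdot,t)\|_{H^1}^2+\|v_{xx}(\cdot,t)\|_{L^2}^2\le C\mathrm{e}^{-\alpha t}$ once $\|w_0\|_{H^1}^2+\|v_0\|_{H^1}^2$ is small. (v) Finally I would set $\sigma^2=\max\{4C(\|w_{0xx}\|_{L^2}^2+\|v_{0xx}\|_{L^2}^2),1\}$ and choose $\delta$ and $\|w_0\|_{H^1}^2+\|v_0\|_{H^1}^2$ small enough that every smallness condition above holds and the right-hand sides are bounded by $\frac94\sigma^2$ and $\delta^2$; the \emph{a priori} assumption is then recovered with strict inequalities, the continuation argument closes, and global existence together with \eqref{decay-global-w-v} follows.

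The main obstacle is step~(iv) and the coupling of (ii)--(iv): with no diffusive dissipation in the $v$-equation, every spatial derivative of $v$ must be extracted from the explicit ODE solution, and each differentiation introduces a cumulative time-integral of one more $x$-derivative of $w$, so $w$ and $v$ cannot be estimated separately. The argument works only because $M+w_x\ge M/2$, so the factor $\mathrm{e}^{-Mt}$ in $v$ dominates the polynomial growth of those time-integrals, at the price of requiring $H^1$-smallness of the initial data.
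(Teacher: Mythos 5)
Your overall architecture (local existence plus uniform \emph{a priori} estimates under the assumption \eqref{apriori-ass-D-0}, closed by a continuation argument) is the same as the paper's, and your ordering of the energy estimates for $w$ in steps (i)--(iii) matches Lemmas \ref{lem-h-1-D0} and \ref{lem-bd-w-xx-singl-D0} almost verbatim. Where you genuinely diverge is in the treatment of $v$: you propose to differentiate the explicit solution formula $v=v_0\exp(-Mt-\int_0^t w_x\,\mathrm{d}s)$ in $x$ and read off $v_x$, $v_{xx}$ as combinations of $v_{0x},v_{0xx}$ and cumulative time integrals of $w_{xx}$, $w_{xxx}$, all damped by $\mathrm{e}^{-Mt/2}$. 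The paper instead uses the explicit formula only once (for $\|v\|_{L^\infty}\le C\mathrm{e}^{-Mt/2}$ in Lemma \ref{lem-higher-D0}) and otherwise differentiates the ODE itself to get \eqref{v-x-eq-D0} and \eqref{v-xx-eq}, performs energy estimates on those, and closes the $w_{xxx}$/$v_{xx}$ coupling with a Gronwall inequality whose coefficient $\mathrm{e}^{-M\tau/2}+\|w_{xx}\|_{L^2}^2$ is integrable in time. Your route buys a more transparent decay mechanism for $v$ and its derivatives (in particular it directly yields exponential decay of $\|v_{xx}\|_{L^2}$, which the paper's Lemma \ref{lem-higher-D0} does not actually establish --- it only proves boundedness, so Proposition \ref{prop-apriori-D-0} and the display \eqref{decay-global-w-v} are not fully reconciled in the paper itself); the price is that you must handle $L^2$ and $L^4$ norms of $\int_0^t w_{xx}\,\mathrm{d}s$ and $\int_0^t w_{xxx}\,\mathrm{d}s$ via Minkowski and polynomial-versus-exponential comparisons, and you must still break the circularity between $\int_0^t\|w_{xxx}\|_{L^2}^2$ and $\int_0^t\|v_{xx}\|_{L^2}^2$ by a Gronwall argument, which you acknowledge but do not carry out. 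One further detail you omit is the role of the weight $\tilde{\sigma}^4$ in the paper's combination \eqref{v-h1-final}: since the source term in the $v_x$ estimate is $C\tilde{\sigma}^4\|v\|_{L^2}^2$ with $\tilde{\sigma}$ potentially large, the $v$- and $v_x$-estimates must be added with $\tilde{\sigma}$-dependent weights to obtain a decay rate independent of $\tilde{\sigma}$.

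One step in your sketch overclaims: in (iii) you assert $\|w_{xx}(\cdot,t)\|_{L^2}^2\le C\mathrm{e}^{-\alpha t}$ ``exactly as in the case $D>0$,'' but the $D>0$ argument (and Lemma \ref{lem-bd-w-xx-singl-D0} here) only produces the uniform bound $\|w_{xx}\|_{L^2}\le\frac32\tilde{\sigma}$: the estimate for $\frac{\mathrm{d}}{\mathrm{d}t}\|w_t\|_{L^2}^2$ carries a term $C\|w_t\|_{L^2}^2$ with a non-small constant that cannot be absorbed by the dissipation $\|w_{xt}\|_{L^2}^2$, so one only integrates in time using $\int_0^t\|w_\tau\|_{L^2}^2\,\mathrm{d}\tau\le C$ rather than obtaining decay. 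If you want the full decay claimed in \eqref{decay-global-w-v} for the second derivatives, you would need an additional weighted-in-time argument for $\|w_t\|_{L^2}^2$ (exploiting the exponential decay of all the source terms $\|v\|_{H^1}^2$, $\|w_x\|_{L^2}^2$ on the right-hand side); this is not supplied by either your sketch or the paper's proof as written.
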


To prove Proposition \ref{prpo-global-D-0}, by the local existence result and the standard continuation argument, we just need to establish some \emph{a priori} estimates as stated in the following proposition.
\begin{proposition}\label{prop-apriori-D-0}
For any $ T>0 $, let $ (w,v) \in X _{0}(0,T) $ be a solution to the initial-boundary value problem \eqref{D-0-reforu}. Then there exists a constant $ \hat{C}_{0}>0 $ independent of $ T $ such that if $ \|w _{0}\|_{H ^{1}}^{2}+\|v _{0}\|_{H ^{1}}^{2}\leq \hat{C} _{0} $, then the solution $ (w,v)  $ possesses the following estimates:
\begin{subequations}\label{decay-and-bded}
 \begin{gather}
 \displaystyle   \|w(\cdot,t)\|_{H ^{1}}^{2}+\|v (\cdot,t)\|_{H ^{1}}^{2} \leq C 		{\mathop{\mathrm{e}}}^{- \alpha _{2}t},\\
 \displaystyle \|w _{xx}\|_{L ^{2}}^{2}+\|v _{xx}(\cdot,t)\|_{L ^{2}}^{2}+\int _{0}^{t} (\|w\|_{H ^{3}}^{2}+\|w _{\tau}\|_{H ^{1}}^{2}+\|v _{\tau}\|_{H ^{2}}^{2})\mathrm{d}\tau \leq C
 \end{gather}
\end{subequations}
for any $ t \in [0,T] $, where $ \alpha _{2} $ is as in \eqref{decay-first}, the constant $ C>0 $ is independent of $ T $.
\end{proposition}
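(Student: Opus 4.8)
The plan is to run a continuity (bootstrap) argument, in the spirit of the treatment of the case $D>0$ in Section~\ref{Asymptiotic-stability}. We first make an a priori assumption that on $[0,T]$
\[
\sup_{0\le t\le T}\bigl(\|w\|_{H^1}^2+\|v\|_{H^1}^2\bigr)\le \varepsilon^2,\qquad \sup_{0\le t\le T}\bigl(\|w_{xx}\|_{L^2}^2+\|v_{xx}\|_{L^2}^2\bigr)\le \Lambda^2,
\]
where $\varepsilon$ will be taken small (comparable to $\|w_0\|_{H^1}+\|v_0\|_{H^1}$) while $\Lambda$ is of order one and governed by the $H^2$-norm of the data. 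By \eqref{sobole}--\eqref{sobolve-l-inty-zero} this yields $\|w\|_{L^\infty},\|v\|_{L^\infty}\le C\varepsilon$ and $\|w_x\|_{L^\infty},\|v_x\|_{L^\infty}\le C(\varepsilon+\varepsilon^{1/2}\Lambda^{1/2})$; in particular, once $\varepsilon$ (hence $\varepsilon\Lambda$) is small we have $M+w_x\ge M/2$ pointwise on $\mathcal I\times[0,T]$, and this lower bound — reflecting the conservation of mass built into \eqref{D-0-reforu} — is the structural feature driving every subsequent estimate.

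\emph{Steps 1--2: $L^2$ decay, and decay of $v$ in $H^1$ via the ODE structure.} Multiplying the $w$-equation of \eqref{D-0-reforu} by $w$ and the $v$-equation by $v$, integrating over $\mathcal I$, and using $w|_{x=0,1}=0$ to integrate the diffusion and the term $-M\int_{\mathcal I}v_xw\,\mathrm dx=M\int_{\mathcal I}vw_x\,\mathrm dx$ by parts, the damping $-M\|v\|_{L^2}^2$ supplied by the second equation lets us — after treating the remaining cross term by Young's inequality, the cubic terms by the Sobolev embedding and smallness, weighting the $v$-estimate by a large constant $K$, and applying Poincar\'e's inequality $\|w\|_{L^2}\le C\|w_x\|_{L^2}$ — arrive at
\[
\frac{\mathrm d}{\mathrm dt}\bigl(\|w\|_{L^2}^2+K\|v\|_{L^2}^2\bigr)+\beta_0\bigl(\|w\|_{H^1}^2+\|v\|_{L^2}^2\bigr)\le 0,
\]
hence $\|w(\cdot,t)\|_{L^2}^2+\|v(\cdot,t)\|_{L^2}^2\le Ce^{-\alpha t}$ and $\int_0^t\|w_x\|_{L^2}^2\,\mathrm d\tau\le C$. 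For $v$ we exploit the explicit solution $v(x,t)=v_0(x)\exp\!\bigl(-\!\int_0^t(M+w_x(x,s))\,\mathrm ds\bigr)$ of the ODE $v_t=-(M+w_x)v$, giving $\|v(\cdot,t)\|_{L^2}+\|v(\cdot,t)\|_{L^\infty}\le Ce^{-Mt/2}$; differentiating the $v$-equation in $x$ gives $v_{xt}=-(M+w_x)v_x-w_{xx}v$, and testing against $v_x$ (using $M+w_x\ge M/2$ and bounding $\int_{\mathcal I}|w_{xx}||v||v_x|$ by Young's inequality together with the Nirenberg interpolation of $\|v\|_{L^\infty}$) produces $\frac{\mathrm d}{\mathrm dt}\|v_x\|_{L^2}^2+\tfrac{M}{2}\|v_x\|_{L^2}^2\le C(\Lambda^4+\Lambda^2)\|v\|_{L^2}^2\le Ce^{-\alpha t}$, whence Gr\"onwall's lemma gives $\|v(\cdot,t)\|_{H^1}^2\le Ce^{-\alpha_2 t}$.

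\emph{Steps 3--4: $H^1$ decay of $w$, $H^2$ bounds, and closing the loop.} Testing the $w$-equation against $w_t$ and using Steps 1--2 gives $\frac{\mathrm d}{\mathrm dt}\|w_x\|_{L^2}^2+\tfrac12\|w_t\|_{L^2}^2\le C\|v_x\|_{L^2}^2\le Ce^{-\alpha_2 t}$; adding a large multiple of the inequality of Step~1 and again invoking Poincar\'e's inequality yields a closed differential inequality $\frac{\mathrm d}{\mathrm dt}G+cG\le Ce^{-\alpha_2 t}$ with $G\sim\|w\|_{H^1}^2+\|v\|_{L^2}^2$, so $\|w(\cdot,t)\|_{H^1}^2\le Ce^{-\alpha_2 t}$ after shrinking $\alpha_2$, and also $\int_0^t\|w_\tau\|_{L^2}^2\,\mathrm d\tau\le C$. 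Differentiating the $w$-equation in $t$, testing $w_{tt}$ against $w_t$, and using Poincar\'e's inequality on $w_t$ (which vanishes at $x=0,1$) to absorb the dissipation-free terms — the source terms involve $v_{xt}=-(M+w_x)v_x-w_{xx}v$ and $(w_xv_x)_t$, which decay or are controlled by already bounded quantities — gives $\|w_t(\cdot,t)\|_{L^2}^2\le Ce^{-\alpha_2 t}$ (with $\|w_t(\cdot,0)\|_{L^2}$ read off from the equation) and $\int_0^t\|w_{x\tau}\|_{L^2}^2\,\mathrm d\tau\le C$; plugging this into $w_{xx}=w_t+Mv_x+w_xv_x$ and its $x$-derivative controls $\|w_{xx}(\cdot,t)\|_{L^2}^2$ and $\int_0^t\|w\|_{H^3}^2\,\mathrm d\tau$. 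For $\|v_{xx}\|_{L^2}$ we differentiate the explicit formula twice in $x$: $v_{xx}$ equals $\exp(-\!\int_0^t(M+w_x)\,\mathrm ds)$ times a polynomial in $v_0,v_{0x},v_{0xx}$ and the time-integrals $\int_0^t w_{xx}(x,s)\,\mathrm ds$, $\int_0^t w_{xxx}(x,s)\,\mathrm ds$, which by Cauchy--Schwarz are at most polynomially growing in $t$ in $L^2_x$ (in view of $\int_0^t\|w\|_{H^3}^2\le C$); since this polynomial growth is defeated by $e^{-Mt/2}$ we obtain $\|v_{xx}(\cdot,t)\|_{L^2}^2\le C$, and the analogous computation for $v_\tau=-(M+w_x)v$ gives $\int_0^t\|v_\tau\|_{H^2}^2\,\mathrm d\tau\le C$. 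Collecting everything recovers \eqref{decay-and-bded}; finally, fixing $\varepsilon$ small (in terms of $\Lambda$) and then $\hat C_0$ small enough, the bounds just derived are strictly sharper than the a priori assumption, which closes the bootstrap.

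\emph{Expected main obstacle.} The crux is the higher-order control of $v$. Because the $v$-equation carries no diffusive smoothing, a naive energy estimate on $v_{xx}$ produces the term $\int_{\mathcal I}w_{xxx}\,v\,v_{xx}\,\mathrm dx$ involving $w_{xxx}$, of which only a time-integrated $L^2$ bound is available; one is therefore essentially forced to work with the explicit representation $v=v_0\exp(-\!\int_0^t(M+w_x)\,\mathrm ds)$ and to verify that the exponential damping $e^{-Mt/2}$ kills the at-most-polynomial-in-$t$ growth of the iterated time-integrals $\int_0^t\partial_x^k w\,\mathrm ds$. This mechanism is also what forces the stronger smallness hypothesis here — $\|v_0\|_{H^1}$ small rather than merely $\|v_0\|_{L^2}$ small, as in the case $D>0$ — and it is tightly coupled to the $\int_0^t\|w\|_{H^3}^2$ estimate of Step~3.
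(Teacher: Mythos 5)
Your overall strategy is the one the paper uses: a bootstrap with $H^1$-smallness plus $H^2$-boundedness, the pointwise lower bound $M+w_x\ge M/2$ as the damping mechanism for the ODE, exponential decay of $\|v\|_{H^1}$ fed as a source into the $w$-estimates, a $w_t$-energy estimate (via $\partial_t$ of the first equation) to control $\|w_{xx}\|_{L^2}$ through $w_{xx}=w_t+Mv_x+w_xv_x$, and the observation that $e^{-Mt/2}$ defeats polynomial-in-$t$ growth in the $v_{xx}$ estimate. The one place you genuinely diverge is the last step: you differentiate the representation formula \eqref{v-formula} twice in $x$ and estimate the resulting iterated time-integrals directly, whereas the paper tests the twice-differentiated ODE \eqref{v-xx-eq} against $v_{xx}$, uses \eqref{v-formula} only through $\|v\|_{L^\infty}\le Ce^{-Mt/2}$ in the term $\int_0^t\int w_{xxx}vv_{xx}$, and closes with Gr\"onwall (see \eqref{int-inequal}--\eqref{v-xx-singl}). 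Both exploit the same mechanism; the paper's version avoids having to estimate $\bigl(\int_0^tw_{xx}\,\mathrm ds\bigr)^2$ and $\int_0^tw_{xxx}\,\mathrm ds$ in $L^2_x$, which in your version requires some extra interpolation bookkeeping.

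Two points need repair. First, there is a circularity in your ordering: you invoke $\int_0^t\|w\|_{H^3}^2\,\mathrm d\tau\le C$ to show that $\int_0^tw_{xxx}\,\mathrm ds$ grows at most polynomially, but $w_{xxx}=w_{xt}+Mv_{xx}+w_{xx}v_x+w_xv_{xx}$ contains $v_{xx}$, so the $H^3$ bound on $w$ is only available \emph{after} (or simultaneously with) the $v_{xx}$ bound — exactly as in \eqref{w-xxx-multi-esti}, where one only gets $\int_0^t\|w_{xxx}\|_{L^2}^2\le C+C\int_0^t\|v_{xx}\|_{L^2}^2$. This is fixable: either use your a priori bound $\|v_{xx}\|_{L^2}\le\Lambda$ to conclude $\int_0^t\|w_{xxx}\|_{L^2}^2\le C(1+\Lambda^2t)$, which is still polynomial and hence killed by $e^{-Mt}$, and then close that component of the bootstrap; or run the coupled Gr\"onwall argument as the paper does. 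Second, the closed differential inequality you display in Step 1, $\frac{\mathrm d}{\mathrm dt}(\|w\|_{L^2}^2+K\|v\|_{L^2}^2)+\beta_0(\|w\|_{H^1}^2+\|v\|_{L^2}^2)\le0$, is not correct as written: the cubic term $-\int_{\mathcal I}w_xwv_x\,\mathrm dx$ leaves a source $C\varepsilon\|v_x\|_{L^2}^2$ which the $L^2$-damping $-M\|v\|_{L^2}^2$ cannot absorb, since the $v$-equation has no diffusion. The correct statement is the paper's \eqref{w-h1-final}, with $\|v\|_{H^1}^2$ on the right; the conclusion survives because your Step 2 independently gives exponential decay of $\|v_x\|_{L^2}^2$ (needing only $M+w_x\ge M/2$ and $\|w_{xx}\|_{L^2}\le\Lambda$, not Step 1), so the assembly must be done in the order $v\Rightarrow v_x\Rightarrow w$, as in \eqref{v-h1-final} combined with \eqref{w-h1-final}. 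With these two orderings straightened out, your proof goes through.
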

\begin{proof}
 The proof of Proposition \ref{prop-apriori-D-0} consists of Lemmas \ref{lem-h-1-D0}--\ref{lem-higher-D0} below.
\end{proof}

Before proceeding, we assume that the solution $ (w,v) $ to the problem \eqref{D-0-reforu} satisfy the following \emph{a priori }assumptions:
\begin{gather}\label{apriori-ass-D-0}
 \displaystyle  \|w(\cdot,t)\|_{H ^{1}}+\|v (\cdot,t)\|_{H ^{1}} \leq 2\tilde{\delta},\ \ \|w _{xx}(\cdot,t)\|_{L ^{2}}\leq 2 \tilde{\sigma} \ \ \mbox{for any } t \in [0,T],
 \end{gather}
 where $ 0<\tilde{\delta}<1 $ and $ \tilde{\sigma} \geq 1 $ are constants to be determined later. Now let us derive the estimates on the $ H ^{1} $-norm of $ (w,v) $ with \eqref{apriori-ass-D-0}.
\begin{lemma}\label{lem-h-1-D0}
For any $ T>0 $, let $ (w,v) \in X _{0}(0,T) $ be a solution to the initial-boundary value problem \eqref{D-0-reforu} satisfying \eqref{apriori-ass-D-0}. Then it holds for $ \tilde{\sigma}\geq 1 $ that
\begin{gather}\label{con-v-w-h1-esti}
\displaystyle   \|w\|_{H ^{1}}^{2}+\|v \|_{H ^{1}}^{2} \leq C (\tilde{\sigma} ^{4}\|v _{0}\|_{H ^{1}}^{2}+ \|w _{0}\|_{ H ^{1}}^{2})\ \  \mbox{for all }t \in [0,T],
\end{gather}
provided $ \tilde{\delta} $ is suitably small. Furthermore, we have the following decay estimate
\begin{gather}\label{decay-first}
 \displaystyle  \|w\|_{H ^{1}}^{2}+\|v\|_{H ^{1}}^{2} \leq C (\tilde{\sigma} ^{4}\|v _{0}\|_{H ^{1}}^{2}+ \|w _{0}\|_{ H ^{1}}^{2})		{\mathop{\mathrm{e}}}^{- \alpha _{2}t}\ \  \mbox{for all }t \in [0,T],
 \end{gather}
 where $ \alpha _{2} $ and $ C $ are positive constants independent of $ t$ and $ \tilde{\sigma} $.

\end{lemma}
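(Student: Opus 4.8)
The plan is to run energy estimates separately for $v$ and then for $w$, exploiting that the second equation of \eqref{D-0-reforu}, $v_t=-(M+w_x)v$, is a damped reaction equation whose decay comes from the $-Mv$ term while the $w_xv$ term is a genuinely small perturbation under the \emph{a priori} assumptions \eqref{apriori-ass-D-0}. Throughout I will use that \eqref{sobole}--\eqref{sobolve-l-inty-zero} together with \eqref{apriori-ass-D-0} give $\|w_x\|_{L^\infty}\le C(\tilde\delta^{1/2}\tilde\sigma^{1/2}+\tilde\delta)$, $\|w\|_{L^\infty}\le\|w_x\|_{L^2}$ (since $w$ vanishes on $\partial\mathcal I$), and $\|v\|_{L^\infty}\le C\tilde\delta$; fixing $\tilde\sigma$ first (in terms of $\|w_{0xx}\|_{L^2}$, analogously to \eqref{M1-defi}) and then taking $\tilde\delta$ small relative to $\tilde\sigma$ forces $\|w_x\|_{L^\infty}\le M/2$. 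It suffices to prove \eqref{decay-first}, since \eqref{con-v-w-h1-esti} is the weaker statement obtained from $e^{-\alpha_2 t}\le1$.

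For $v$: testing $v_t=-(M+w_x)v$ against $v$ gives $\tfrac12\tfrac{d}{dt}\|v\|_{L^2}^2+M\|v\|_{L^2}^2=-\int_{\mathcal I}w_xv^2\,\mathrm dx\le\|w_x\|_{L^\infty}\|v\|_{L^2}^2\le\tfrac M2\|v\|_{L^2}^2$, hence $\|v(\cdot,t)\|_{L^2}^2\le\|v_0\|_{L^2}^2e^{-Mt}$. Differentiating in $x$ yields $v_{xt}=-(M+w_x)v_x-w_{xx}v$; testing against $v_x$ gives
\[
\tfrac12\tfrac{d}{dt}\|v_x\|_{L^2}^2+M\|v_x\|_{L^2}^2=-\int_{\mathcal I}w_xv_x^2\,\mathrm dx-\int_{\mathcal I}w_{xx}v\,v_x\,\mathrm dx .
\]
The first right-hand term is $\le\tfrac M2\|v_x\|_{L^2}^2$; for the second, $|\int_{\mathcal I}w_{xx}v\,v_x\,\mathrm dx|\le\|w_{xx}\|_{L^2}\|v\|_{L^\infty}\|v_x\|_{L^2}\le 2\tilde\sigma c_1(\|v\|_{L^2}^{1/2}\|v_x\|_{L^2}^{1/2}+\|v\|_{L^2})\|v_x\|_{L^2}$, and Young's inequality with a parameter of size $\sim\tilde\sigma^{-1}$ bounds this by $\tfrac M4\|v_x\|_{L^2}^2+C\tilde\sigma^4\|v\|_{L^2}^2$. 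Thus $\tfrac{d}{dt}\|v_x\|_{L^2}^2+M\|v_x\|_{L^2}^2\le C\tilde\sigma^4\|v_0\|_{L^2}^2e^{-Mt}$, and an integrating-factor argument (multiplying by $e^{\alpha t}$ with $\alpha<M$) gives $\|v_x(\cdot,t)\|_{L^2}^2\le C\tilde\sigma^4\|v_0\|_{H^1}^2e^{-\alpha t}$, hence the asserted decay of $\|v\|_{H^1}^2$.

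For $w$: testing the $w$-equation against $w$ and integrating by parts (using $w|_{\partial\mathcal I}=0$) gives $\tfrac12\tfrac{d}{dt}\|w\|_{L^2}^2+\|w_x\|_{L^2}^2=M\int_{\mathcal I}vw_x\,\mathrm dx-\int_{\mathcal I}ww_xv_x\,\mathrm dx$, where $M\int vw_x\le\tfrac14\|w_x\|_{L^2}^2+M^2\|v\|_{L^2}^2$ and $|\int ww_xv_x|\le\|w\|_{L^\infty}\|w_x\|_{L^2}\|v_x\|_{L^2}\le\|w_x\|_{L^2}^2\|v_x\|_{L^2}\le\tfrac14\|w_x\|_{L^2}^2+C\tilde\delta^2\|v_x\|_{L^2}^2$; since $\|w\|_{L^2}\le\|w_x\|_{L^2}$ this yields $\tfrac{d}{dt}\|w\|_{L^2}^2+c\|w\|_{L^2}^2\le C(M^2\|v\|_{L^2}^2+\tilde\delta^2\|v_x\|_{L^2}^2)$, whose right-hand side decays exponentially by the $v$-estimates, so $\|w(\cdot,t)\|_{L^2}^2\le C(\|w_0\|_{H^1}^2+\tilde\sigma^4\|v_0\|_{H^1}^2)e^{-\alpha_2 t}$ by an integrating factor. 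Next, testing the $w$-equation against $-w_{xx}$ (note $w_t|_{\partial\mathcal I}=0$) gives $\tfrac12\tfrac{d}{dt}\|w_x\|_{L^2}^2+\|w_{xx}\|_{L^2}^2=M\int_{\mathcal I}v_xw_{xx}\,\mathrm dx+\int_{\mathcal I}w_xv_xw_{xx}\,\mathrm dx\le\tfrac12\|w_{xx}\|_{L^2}^2+C\|v_x\|_{L^2}^2$ after using $\|w_x\|_{L^\infty}$ smallness to absorb the cross term; combining with the identity $\|w_x\|_{L^2}^2=-\int_{\mathcal I}ww_{xx}\,\mathrm dx\le\varepsilon\|w_{xx}\|_{L^2}^2+C_\varepsilon\|w\|_{L^2}^2$ (again from $w|_{\partial\mathcal I}=0$) recovers a damping $\sim\|w_x\|_{L^2}^2$ with a harmless lower-order term $C_\varepsilon\|w\|_{L^2}^2$, and since $\|w\|_{L^2}^2$ and $\|v_x\|_{L^2}^2$ decay exponentially, a final integrating-factor argument yields the exponential decay of $\|w_x(\cdot,t)\|_{L^2}^2$. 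Adding the pieces and relabelling $\alpha_2$ as the smallest rate gives \eqref{decay-first}, and \eqref{con-v-w-h1-esti} follows.

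The main obstacle is the term $\int_{\mathcal I}w_{xx}v\,v_x\,\mathrm dx$ in the $v_x$-estimate: because $D=0$ there is no $\|v_{xx}\|_{L^2}^2$ dissipation available, so $\|v_x\|_{L^2}^2$ must be absorbed into the reaction damping $M\|v_x\|_{L^2}^2$, which forces the large constant $C\tilde\sigma^4$ in front of $\|v\|_{L^2}^2$; the estimate nonetheless closes because $\|v\|_{L^2}^2$ decays exponentially, so the large constant is harmless. A secondary technical point is that $w_x$ does not vanish on $\partial\mathcal I$, so the usual Poincar\'e inequality is unavailable for $w_x$; the substitute is the identity $\|w_x\|_{L^2}^2=-\int_{\mathcal I}ww_{xx}\,\mathrm dx$ coming from $w|_{\partial\mathcal I}=0$, which simultaneously transfers decay from $\|w\|_{L^2}$ to $\|w_x\|_{L^2}$ and supplies the damping needed in the $w_x$-estimate.
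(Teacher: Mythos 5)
Your proof is correct and follows essentially the same route as the paper: the $L^2$ and $\dot H^1$ energy estimates for $v$ with the reaction damping $M$ absorbing the $w_x$ perturbation and the $\tilde\sigma$-dependent Young inequality producing the $C\tilde\sigma^4\|v\|_{L^2}^2$ term, followed by energy estimates for $w$ treating $v$ as an exponentially decaying source and using Poincar\'e for $w\in H_0^1$. The only (immaterial) deviations are that you test the $w$-equation against $-w_{xx}$ where the paper uses $w_t$, and that you feed the already-established decay of $\|v\|_{L^2}^2$ into the $v_x$-inequality sequentially rather than forming the paper's weighted combination $\tilde\sigma^4\|v\|_{L^2}^2+\|v_x\|_{L^2}^2$.
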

\begin{proof}
We divide the proof into three steps.\\
\noindent \emph{Step 1:} Estimates on $ w $. Multiplying the first equation in \eqref{D-0-reforu} followed by an integration over $ \mathcal{I} $, we have
 \begin{align}\label{diff-0-l2}
\displaystyle \frac{1}{2}\frac{\mathrm{d}}{\mathrm{d}t}\int _{\mathcal{I}}w ^{2}\mathrm{d}x + \int _{\mathcal{I} }w _{x}^{2} \mathrm{d}x &= - \int _{\mathcal{I} }w Mv _{x} \mathrm{d}x- \int _{\mathcal{I} }w _{x}w v _{x} \mathrm{d}x.
\end{align}
With integration by parts and the Cauchy-Schwarz inequality, we get
\begin{align}\label{0esit-l2err1}
\displaystyle - \int _{\mathcal{I} }w Mv _{x} \mathrm{d}x&=\int _{\mathcal{I} }Mv w _{x} \mathrm{d}x \leq \eta \int _{\mathcal{I} }w _{x}^{2} \mathrm{d}x+C _{\eta}\int _{\mathcal{I} }v ^{2} \mathrm{d}x
 \end{align}
for any $ \eta>0 $. In view of \eqref{apriori-ass-D-0}, the Cauchy-Schwarz inequality and the Sobolev inequality \eqref{sobolve-l-inty-zero}, we derive
\begin{align}\label{0esit-l2err2}
\displaystyle - \int _{\mathcal{I} }w _{x}w v _{x} \mathrm{d}x& \leq C \|w\|_{L ^{\infty}}\|w _{x}\|_{L ^{2}}\|v _{x}\|_{L ^{2}} \leq C\tilde{\delta} \left( \|w _{x}\|_{L ^{2}}^{2}+\|v _{x}\|_{L ^{2}}^{2} \right).
\end{align}
Inserting \eqref{0esit-l2err1} and \eqref{0esit-l2err2} into \eqref{diff-0-l2}, for suitably small $ \tilde{\delta} $ and $ \eta $, it holds that
\begin{align}\label{w-l2-final}
\displaystyle \frac{\mathrm{d}}{\mathrm{d}t}\int _{\mathcal{I} }w ^{2} \mathrm{d}x+ \int _{\mathcal{I} }w _{x}^{2} \mathrm{d}x \leq C \int _{\mathcal{I} } \left( v ^{2}+v _{x}^{2}\right)  \mathrm{d}x.
\end{align}
To proceed, multiplying the first equation in \eqref{D-0-reforu} by $ w _{t} $ and then integrating the resulting equation over $ \mathcal{I} $, we get
\begin{align}\label{w-x-esti}
\displaystyle \frac{1}{2}\frac{\mathrm{d}}{\mathrm{d}t}\int _{\mathcal{I} } w _{x}^{2} \mathrm{d}x+\int _{\mathcal{I} }w _{t}^{2} \mathrm{d}x= -M\int _{\mathcal{I} }v _{x}w _{t} \mathrm{d}x- \int _{\mathcal{I} }w _{x}v _{x}w _{t} \mathrm{d}x.
\end{align}
For the first term on the right hand side of \eqref{w-x-esti}, we utilize the Cauchy-Schwarz inequality to get
\begin{align*}
\displaystyle -M\int _{\mathcal{I} }v _{x}w _{t} \mathrm{d}x \leq \eta \|w _{t}\|^{2}+C _{\eta}\|v _{x}\|_{L ^{2}}^{2}
\end{align*}
for any $ \eta>0 $. For the last term,  from \eqref{sobole} and \eqref{apriori-ass-D-0}, we have
 \begin{gather}\label{w-x-linft-small}
 \displaystyle \|w _{x}\|_{L ^{\infty}} \leq C \tilde{\sigma} ^{\frac{1}{2}}\tilde{\delta}^{\frac{1}{2}}+C \tilde{\delta} ,
 \end{gather}
 which along with the Cauchy-Schwarz inequality implies that
\begin{align*}
\displaystyle \int _{\mathcal{I} }w _{x}v _{x}w _{t} \mathrm{d}x& \leq \|w _{x}\|_{L ^{\infty}}\|w _{t}\|_{L ^{2}}\|v _{x}\|_{L ^{2}} \leq C(\tilde{\sigma}^{\frac{1}{2}}\tilde{\delta}^{\frac{1}{2}}+ \tilde{\delta} )\left( \|w _{t}\|_{L ^{2}}^{2}+\|v _{x}\|_{L ^{2}}^{2} \right).
\end{align*}
Therefore, after taking $ \eta $ suitably small~(e.g., $ \eta< \frac{1}{4} $) and choosing $ \tilde{\delta}$ small enough such that
\begin{gather}\label{constrain-2-D0}
 \displaystyle  C\Big(\tilde{\sigma}^{\frac{1}{2}}\tilde{\delta}^{\frac{1}{2}}+ \tilde{\delta} \Big) \leq \frac{1}{4},
 \end{gather}
 we update \eqref{w-x-esti} as
\begin{align*}
\displaystyle  \frac{\mathrm{d}}{\mathrm{d}t}\int _{\mathcal{I} }w _{x}^{2} \mathrm{d}x+\int _{\mathcal{I} }w _{t}^{2} \mathrm{d}x \leq C \|v _{x}\|_{L ^{2}}^{2}.
\end{align*}
This together with \eqref{w-l2-final} gives
\begin{align}\label{w-h1-final}
\displaystyle \frac{\mathrm{d}}{\mathrm{d}t}\int _{\mathcal{I} }(w ^{2}+w _{x}^{2}) \mathrm{d}x+\int _{\mathcal{I} }(w _{x}^{2}+w _{t}^{2}) \mathrm{d}x \leq C \int _{\mathcal{I} } \left( v ^{2}+v _{x}^{2}\right)  \mathrm{d}x.
\end{align}
\noindent
\emph{Step 2:} Estimates on $ v $. In view of \eqref{w-x-linft-small}, it holds that
\begin{gather}\label{vfi-M-bd}
\displaystyle \frac{M}{2} \leq w _{x}+M \leq \frac{3M}{2},
\end{gather}
provided
\begin{gather}\label{constain-1-D0}
\displaystyle   C(\tilde{\sigma} ^{\frac{1}{2}}\tilde{\delta}^{\frac{1}{2}}+\tilde{\delta} )  \leq \frac{M}{2}.
\end{gather}
Therefore we test the second equation in \eqref{D-0-reforu} against $ v $ to get
\begin{align*}
\displaystyle  \frac{1}{2}\frac{\mathrm{d}}{\mathrm{d}t}\int _{\mathcal{I} }v ^{2} \mathrm{d}x+ \frac{M}{2} \int _{\mathcal{I} }v ^{2} \mathrm{d}x \leq  \frac{1}{2}\frac{\mathrm{d}}{\mathrm{d}t}\int _{\mathcal{I} }v ^{2} \mathrm{d}x+  \int _{\mathcal{I} } (w _{x}+M)v ^{2}\mathrm{d}x =0.
\end{align*}
That is,
\begin{gather}\label{v-diff}
\displaystyle \frac{\mathrm{d}}{\mathrm{d}t}\int _{\mathcal{I} }v ^{2} \mathrm{d}x +M \int _{\mathcal{I} }v ^{2} \mathrm{d}x \leq 0.
\end{gather}
Differentiating the second equation in \eqref{D-0-reforu} with respect to $ x $ gives
\begin{align}\label{v-x-eq-D0}
\displaystyle v _{xt}=-(M+ w _{x})v _{x}- w _{xx}v.
\end{align}
Multiplying \eqref{v-x-eq-D0} by $ v _{x} $ and then integrating the resulting equation over $ \mathcal{I} $, it follows that
\begin{align}\label{v-x-diff-0--first}
\displaystyle \frac{1}{2}\frac{\mathrm{d}}{\mathrm{d}t}\int _{\mathcal{I} }v _{x}^{2} \mathrm{d}x+ \int _{\mathcal{I} }(M +w _{x})v _{x}^{2} \mathrm{d}x=-\int _{\mathcal{I} }w _{xx}v v _{x} \mathrm{d}x
\end{align}
with
\begin{align*}
\displaystyle  -\int _{\mathcal{I} }w _{xx}v v _{x} \mathrm{d}x & \leq \frac{M}{16}\|v _{x}\|_{L ^{2}}^{2}+C\int _{\mathcal{I} }w _{xx}^{2}v ^{2} \mathrm{d}x
  \leq \frac{M}{16}\|v _{x}\|_{L ^{2}}^{2}+\frac{C}{M} \|v\|_{L ^{\infty}} ^{2}\int _{\mathcal{I} }w _{xx} ^{2} \mathrm{d}x
  \nonumber \\
  & \displaystyle \leq \frac{M}{16} \|v _{x}\|_{L ^{2}}^{2}+C \frac{\tilde{\sigma} ^{2} }{M}\left( \|v _{x}\|_{L ^{2}} \|v \|_{L ^{2}} +\|v \|_{L ^{2}}^{2}\right)\nonumber\\
   & \leq \frac{M}{8}\|v _{x}\|_{L ^{2}}^{2}+C \left(\frac{\tilde{\sigma} ^{2} }{M }+\frac{\tilde{\sigma}^{4}}{M ^{2}}\right)\|v\|_{L ^{2}}^{2},
\end{align*}
where we have used \eqref{sobole}, \eqref{apriori-ass-D-0} and the Cauchy-Schwarz inequality. Furthermore, thanks to \eqref{vfi-M-bd} and the fact $\tilde{\sigma} \geq 1 $, we have from \eqref{v-x-diff-0--first} that
\begin{align}\label{v-x-diff-0}
\displaystyle  \frac{1}{2}\frac{\mathrm{d}}{\mathrm{d}t}\int _{\mathcal{I} }v _{x}^{2} \mathrm{d}x+ \frac{M}{4}\int _{\mathcal{I} }v _{x}^{2} \mathrm{d}x \leq C \tilde{\sigma} ^{4} \|v\|_{L ^{2}}^{2}.
\end{align}
Combining \eqref{v-x-diff-0} with \eqref{v-diff}, we then have
\begin{align}\label{v-h1-final}
\displaystyle \frac{\mathrm{d}}{\mathrm{d}t}\int _{\mathcal{I} }(\tilde{\sigma} ^{4}v ^{2}+v _{x}^{2}) \mathrm{d}x+\hat{c}_{1}\int _{\mathcal{I} }(v ^{2}+v _{x}^{2}) \mathrm{d}x \leq 0,
\end{align}
where $ \hat{c}_{1}>0$ is a constant which depends on $ M $ but independent of $ \tilde{\sigma} $.

\noindent \emph{Step 3:} Decay estimates. Combining \eqref{v-h1-final} with \eqref{w-h1-final} yields that
\begin{align*}
\displaystyle \frac{\mathrm{d}}{\mathrm{d}t}\int _{\mathcal{I} } (\tilde{\sigma}^{4}v ^{2}+v _{x}^{2}+w ^{2}+w _{x}^{2})\mathrm{d}x+\hat{c}_{2}\left( \|v\|_{H ^{1}}^{2}+\|w _{x}\|_{L ^{2}}^{2} +\|w _{t}\|_{L ^{2}}^{2}\right) \leq 0,
\end{align*}
where $ \hat{c}_{2} $ is a constant depending on $ M $ but independent of $ \tilde{\sigma} $. Consequently, we have
\begin{align}\label{FIRST-multi-inte}
\displaystyle  \|v\|_{H ^{1}}^{2}+\|w\|_{H ^{1}}^{2}+\int _{0}^{t}\left( \|v\|_{H ^{1}}^{2}+\|w _{x}\|_{L ^{2}}^{2} +\|w _{\tau}\|_{L ^{2}}^{2}\right)\mathrm{d}\tau \leq C (\tilde{\sigma} ^{4}\|v _{0}\|_{H ^{1}}^{2}+ \|w _{0}\|_{ H ^{1}}^{2})
\end{align}
for any $ t \in [0,T] $, where we have used the fact $ \tilde{\sigma} \geq 1 $. The estimate \eqref{con-v-w-h1-esti} is proved. To show the decay estimate \eqref{decay-first}, multiplying \eqref{v-h1-final} by $ 		{\mathop{\mathrm{e}}}^{\hat{\alpha}_{1}t} $ with $\tilde{\sigma} ^{2} \hat{\alpha} _{1} \leq \hat{c}_{1}$, we deduce that
\begin{align}\label{decay-diff-1}
\displaystyle \frac{\mathrm{d}}{\mathrm{d}t}\left\{{\mathop{\mathrm{e}}}^{\hat{\alpha}_{1}t}  \int _{\mathcal{I} }(\tilde{\sigma} ^{4}v ^{2}+v _{x}^{2}) \mathrm{d}x\right\} \leq 0,
\end{align}
which immediately yields that
\begin{align}\label{v-decay-0}
 \displaystyle   \int _{\mathcal{I} }(\tilde{\sigma} ^{4}v ^{2}+v _{x}^{2}) \mathrm{d}x \leq  {\mathop{\mathrm{e}}}^{-\hat{\alpha}_{1}t}\int _{\mathcal{I} }(\tilde{\sigma} ^{4}v _{0} ^{2}+v _{0x}^{2}) \mathrm{d}x.
 \end{align}
 Since $ w(0,t)=w(1,t) =0$, we have $ \|w\|_{L ^{2}} ^{2} \leq \tilde{C}_{1} \|w _{x}\|_{L ^{2}}^{2} $ for some constant $ \tilde{C}_{1}>0 $. Multiplying \eqref{w-h1-final} by $ 		{\mathop{\mathrm{e}}}^{\hat{\alpha}_{2}t} $ with $ \hat{\alpha}_{2} < \min\{\frac{1}{2}\tilde{C}_{1},\hat{\alpha}_{1}\} $, it follows that
 \begin{align}\label{decay-diff-2}
 \displaystyle \frac{\mathrm{d}}{\mathrm{d}t}\left\{ 		{\mathop{\mathrm{e}}}^{\hat{\alpha} _{2}t} \int _{\mathcal{I} }(w ^{2}+w _{x}^{2}) \mathrm{d}x \right\} \leq 		C{\mathop{\mathrm{e}}}^{\hat{\alpha}_{2}t} \int _{\mathcal{I} } \left( v ^{2}+v _{x}^{2}\right)  \mathrm{d}x,
 \end{align}
 and thus
 \begin{align*}
 \displaystyle   \int _{\mathcal{I} }(w ^{2}+w _{x}^{2}) \mathrm{d}x \leq  C\left( \|w _{0}\|_{H ^{1}}^{2}+ \tilde{\sigma} ^{4}\|v _{0}\|_{H ^{1}} ^{2}\right) {\mathop{\mathrm{e}}}^{-\hat{\alpha} _{2}t},
 \end{align*}
 where we have used \eqref{v-decay-0} and $ \tilde{\sigma} \geq 1 $. The proof is completed.
\end{proof}
In the next lemma, we establish estimate for $ w _{xx} $.
\begin{lemma}\label{lem-bd-w-xx-singl-D0}
Assume the conditions of Lemma \ref{lem-h-1-D0} hold. Then the solution $ (w,v) \in X _{0}(0,T) $ to the problem \eqref{D-0-reforu} satisfies
\begin{gather*}
\displaystyle  \|w _{xx}\|_{L ^{2}} \leq \frac{3}{2} \tilde{\sigma} \ \ \mbox{for any }t \in [0,T],
\end{gather*}
provided that $ \tilde{\delta} $ and $ \|v _{0}\| _{H ^{1}}+\|w _{0}\|_{ H ^{1}}$ are suitably small, where $ \tilde{\sigma} \geq1 $ is determined by \eqref{A-CONSTRIAN}.

\end{lemma}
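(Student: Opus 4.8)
\noindent\emph{Proof idea.} The plan is to adapt the higher-order part of Lemma~\ref{lem-bd-closure} (the case $D>0$) to $D=0$, the essential new difficulty being the complete absence of diffusive dissipation in the $v$-equation. I would first control $\|w_t\|_{L^2}$ and then recover $\|w_{xx}\|_{L^2}$ from the identity $w_{xx}=w_t+Mv_x+w_xv_x$ furnished by the first equation of \eqref{D-0-reforu}. Concretely, differentiating that equation in $t$ and testing against $w_t$ (which vanishes on $\partial\mathcal{I}$ by \eqref{D-0-reforu}) gives
\[
\frac12\frac{\mathrm{d}}{\mathrm{d}t}\|w_t\|_{L^2}^2+\|w_{xt}\|_{L^2}^2
=-M\int_{\mathcal{I}}v_{xt}w_t\,\mathrm{d}x-\int_{\mathcal{I}}w_{xt}v_xw_t\,\mathrm{d}x-\int_{\mathcal{I}}w_xv_{xt}w_t\,\mathrm{d}x ,
\]
and the Poincar\'e inequality $\|w_t\|_{L^2}^2\le\tilde C_1\|w_{xt}\|_{L^2}^2$ (valid since $w_t\in H_0^1$) will eventually supply the decay of $\|w_t\|_{L^2}$.

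The crux is that, unlike in Lemma~\ref{lem-bd-closure}, the terms carrying $v_{xt}$ cannot be absorbed by a dissipation $D\psi_{xt}^2$; instead they must be removed using the ODE structure of $v$. For the first term I would integrate by parts, $-M\int_{\mathcal{I}}v_{xt}w_t\,\mathrm{d}x=M\int_{\mathcal{I}}v_tw_{xt}\,\mathrm{d}x$, and use $v_t=-(M+w_x)v$ from \eqref{D-0-reforu} together with \eqref{vfi-M-bd} to obtain $\|v_t\|_{L^2}\le\tfrac{3M}{2}\|v\|_{L^2}$, so that this term is $\le\varepsilon\|w_{xt}\|_{L^2}^2+C_\varepsilon\|v\|_{L^2}^2$. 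For the last term I would substitute \eqref{v-x-eq-D0}, $v_{xt}=-(M+w_x)v_x-w_{xx}v$, which splits it into a benign piece bounded by $\varepsilon\|w_t\|_{L^2}^2+C_\varepsilon\|w_x\|_{L^\infty}^2\|v_x\|_{L^2}^2$ and the critical piece $\int_{\mathcal{I}}w_xw_{xx}vw_t\,\mathrm{d}x$, in which $w_{xx}$ — precisely the quantity being estimated — reappears; this is the main obstacle. I would control it by invoking the a priori bound $\|w_{xx}\|_{L^2}\le2\tilde\sigma$ from \eqref{apriori-ass-D-0}, the smallness $\|w_x\|_{L^\infty}\le C(\tilde\sigma^{1/2}\tilde\delta^{1/2}+\tilde\delta)$ from \eqref{w-x-linft-small}, and $\|v\|_{L^\infty}\le C\|v\|_{H^1}$, so that after Cauchy--Schwarz this term is $\le\varepsilon\|w_t\|_{L^2}^2+C_\varepsilon(\tilde\sigma^{3/2}\tilde\delta^{1/2}+\tilde\sigma\tilde\delta)^2\|v\|_{H^1}^2$; because $\tilde\sigma$ is fixed \emph{before} $\tilde\delta$, the prefactor is as small as desired, while $\|v\|_{H^1}^2$ (and the $\|v_x\|_{L^2}^2$, $\|v_x\|_{L^2}^4$ appearing in the remaining terms, the latter from treating $-\int_{\mathcal{I}}w_{xt}v_xw_t\,\mathrm{d}x$ as in \eqref{h-2-last}) decays exponentially by \eqref{decay-first}.

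Collecting the estimates, choosing $\varepsilon$ and $\tilde\delta$ small, and absorbing the $\varepsilon\|w_t\|_{L^2}^2$ terms into $\|w_{xt}\|_{L^2}^2$ through the Poincar\'e inequality, I would reach a differential inequality $\frac{\mathrm{d}}{\mathrm{d}t}\|w_t\|_{L^2}^2+c\|w_t\|_{L^2}^2+\tfrac12\|w_{xt}\|_{L^2}^2\le C\|v\|_{H^1}^2+(\text{exponentially decaying remainder})$. Integrating this in time, bounding $\|w_t(\cdot,0)\|_{L^2}^2\le C(\|w_{0xx}\|_{L^2}^2+\|v_{0x}\|_{L^2}^2+\|w_{0x}\|_{L^\infty}^2\|v_{0x}\|_{L^2}^2)$ from the $w$-equation at $t=0$, and using \eqref{decay-first}, \eqref{FIRST-multi-inte}, yields $\|w_t(\cdot,t)\|_{L^2}^2+\int_0^t\|w_{x\tau}\|_{L^2}^2\,\mathrm{d}\tau\le C(\|w_{0xx}\|_{L^2}^2+\tilde\sigma^4\|v_0\|_{H^1}^2+\|w_0\|_{H^1}^2)$. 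Feeding this into $\|w_{xx}\|_{L^2}^2\le3\|w_t\|_{L^2}^2+3M^2\|v_x\|_{L^2}^2+3\|w_x\|_{L^\infty}^2\|v_x\|_{L^2}^2$ and then fixing $\tilde\sigma$ via \eqref{A-CONSTRIAN} so that $C\|w_{0xx}\|_{L^2}^2\le\tfrac14\tilde\sigma^2$, one concludes — after taking $\tilde\delta$ and $\|v_0\|_{H^1}+\|w_0\|_{H^1}$ small enough that $\tilde\sigma^4\|v_0\|_{H^1}^2$, $\|w_0\|_{H^1}^2$ and the remaining terms are each at most $\tfrac14\tilde\sigma^2$ — that $\|w_{xx}(\cdot,t)\|_{L^2}^2\le\tfrac94\tilde\sigma^2$, i.e.\ $\|w_{xx}\|_{L^2}\le\tfrac32\tilde\sigma$, which strictly improves and hence closes the $w_{xx}$-component of the a priori assumption \eqref{apriori-ass-D-0}.
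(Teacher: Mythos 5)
Your proposal is correct and follows essentially the same route as the paper: differentiate the $w$-equation in time, test against $w_t$, exploit the ODE $v_t=-(M+w_x)v$ to control the terms carrying $v_{xt}$, and recover $\|w_{xx}\|_{L^2}$ from $w_{xx}=w_t+Mv_x+w_xv_x$ before fixing $\tilde\sigma$ through \eqref{A-CONSTRIAN}. The only local deviation is your treatment of $\int_{\mathcal{I}}w_xv_{xt}w_t\,\mathrm{d}x$, where you substitute \eqref{v-x-eq-D0} and invoke the a priori bound $\|w_{xx}\|_{L^2}\le 2\tilde\sigma$, whereas the paper integrates by parts and re-expresses $\|w_{xx}\|_{L^2}$ via the elliptic identity \eqref{w-xx-esti}; both are legitimate since $\tilde\delta$ and the initial data are chosen small only after $\tilde\sigma$ is fixed.
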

\begin{proof}
  Differentiating the first equation in \eqref{D-0-reforu} with respect to $ t $, we have
\begin{align}\label{w-t-eq}
\displaystyle  w _{tt}= w _{xxt}- M v _{xt}- w _{xt} v_{x}- w _{x} v_{xt}.
\end{align}
Multiplying \eqref{w-t-eq} by $ w _{t} $ followed by an integration over $ \mathcal{I} $, we get
\begin{align}\label{w-t-esti}
&\displaystyle \frac{1}{2}\frac{\mathrm{d}}{\mathrm{d}t}\int _{\mathcal{I} }w _{t}^{2} \mathrm{d}x+\int _{\mathcal{I} }w _{xt}^{2} \mathrm{d}x =- M\int _{\mathcal{I} }v _{xt}w _{t}  \mathrm{d}x- \int _{\mathcal{I} }w _{xt}v _{x}w _{t} \mathrm{d}x- \int _{\mathcal{I} }w _{x}v _{xt}w _{t} \mathrm{d}x.
\end{align}
We next estimate the terms on the right hand side of \eqref{w-t-esti}. Using integration by parts and the Cauchy-Schwarz inequality, one has
\begin{align*}
\displaystyle - M\int _{\mathcal{I} }v _{xt}w _{t}  \mathrm{d}x&= M\int _{\mathcal{I} }v _{t}w _{tx} \mathrm{d}x  \leq \frac{1}{16} \int _{\mathcal{I} }w _{xt}^{2} \mathrm{d}x+ C\int _{\mathcal{I} }v _{t}^{2} \mathrm{d}x.
\end{align*}
Furthermore, recalling \eqref{vfi-M-bd} and the second equation in \eqref{D-0-reforu}, we have
\begin{align}\label{v-t-esti-0}
\displaystyle \int _{\mathcal{I} }v _{t}^{2} \mathrm{d}x& \leq C \int _{\mathcal{I} }v ^{2}(w _{x}+M)^{2} \mathrm{d}x \leq C \|v\|_{L ^{2}}^{2}.
\end{align}
It thus holds that
\begin{align}\label{w-t-first}
\displaystyle  - M\int _{\mathcal{I} }v _{xt}w _{t}  \mathrm{d}x \leq  \frac{1}{16} \int _{\mathcal{I} }w _{xt}^{2} \mathrm{d}x+C\int _{\mathcal{I} }v^{2} \mathrm{d}x,
\end{align}
provided $ C(\tilde{\sigma} ^{\frac{1}{2}}\tilde{\delta}^{\frac{1}{2}}+ \tilde{\delta} )  \leq \frac{M}{2} $. It follows from \eqref{sobolve-l-inty-zero} and the Cauchy-Schwarz inequality that
\begin{align}\label{w-t-second}
\displaystyle - \int _{\mathcal{I} }w _{xt}v _{x}w _{t} \mathrm{d}x & \leq \frac{1}{16} \int _{\mathcal{I} }w _{xt}^{2} \mathrm{d}x+C \int _{\mathcal{I} }w _{t}^{2}v _{x}^{2} \mathrm{d}x
 \leq  \frac{1}{16} \int _{\mathcal{I} }w _{xt}^{2} \mathrm{d}x+C \|w _{t}\|_{L ^{\infty}}^{2} \|v _{x}\|_{L ^{2}}^{2}
  \nonumber \\
  & \displaystyle \leq  \frac{1}{16} \int _{\mathcal{I} }w _{xt}^{2} \mathrm{d}x+ C \|w _{t}\|_{L ^{2}}\|w _{xt}\|_{L ^{2}} \|v _{x}\|_{L ^{2}}^{2}\nonumber \\
    & \leq \frac{1}{8} \int _{\mathcal{I} }w _{xt}^{2} \mathrm{d}x+ C\left\|w _{t}\right\|_{L ^{2}}^{2}\|v _{x}\|_{L ^{2}}^{4}.
\end{align}
For the last term on the right hand side of \eqref{w-t-esti}, integration by parts leads to
\begin{align}\label{last-term0}
\displaystyle \int _{\mathcal{I} }w _{x}v _{xt}w _{t} \mathrm{d}x  = - \int _{\mathcal{I} }w _{xx} v _{t}w _{t}\mathrm{d}x- \int _{\mathcal{I} }w _{x}v _{t}w _{xt} \mathrm{d}x.
\end{align}
Recalling the first equation in \eqref{D-0-reforu}, we have
\begin{align*}
\displaystyle \int _{\mathcal{I} }w _{xx}^{2} \mathrm{d}x  &\leq \int _{\mathcal{I} }w _{t}^{2} \mathrm{d}x  + M ^{2}\int _{\mathcal{I} }v _{x}^{2} \mathrm{d}x  + \int _{\mathcal{I} }w _{x}^{2}v _{x}^{2} \mathrm{d}x,
\end{align*}
where, thanks to the Sobolev inequality \eqref{sobole} and the Cauchy-Schwarz inequality, we deduce that
\begin{align*}
\displaystyle \int _{\mathcal{I} } w _{x}^{2}v _{x}^{2} \mathrm{d}x &\leq \left\| w _{x}\right\|_{L ^{\infty}}^{2}\int _{\mathcal{I} }v _{x}^{2} \mathrm{d}x
\leq C \left( \| w _{x}\|_{L ^{2}}\left\| w _{xx}\right\|_{L ^{2}}+\| w _{x}\| _{L ^{2}}^{2}\right)\|v _{x}\|_{L ^{2}}^{2}
 \nonumber \\
 &\displaystyle \leq \frac{1}{2}\| w _{xx}\|_{L ^{2}}^{2}+C \| w _{x}\|_{L ^{2}}^{2}\|v _{x}\|_{L ^{2}}^{4}+C \| w _{x}\|_{L ^{2}}^{2}\|v _{x}\|_{L ^{2}}^{2}.
\end{align*}
It then follows that
\begin{align}\label{w-xx-esti}
\displaystyle  \int _{\mathcal{I} }w _{xx}^{2} \mathrm{d}x  \leq C \|w _{t}\|_{L ^{2}}^{2} + C \| w _{x}\|_{L ^{2}}^{2}\|v _{x}\|_{L ^{2}}^{4}+C \| w _{x}\|_{L ^{2}}^{2}\|v _{x}\|_{L ^{2}}^{2}.
\end{align}
This, along with \eqref{sobolve-l-inty-zero}, \eqref{v-t-esti-0} and the Cauchy-Schwarz inequality, yields
\begin{eqnarray}\label{split-1}
\begin{aligned}
\displaystyle  - \int _{\mathcal{I} }w _{xx} v _{t}w _{t}\mathrm{d}x& \leq C \int _{\mathcal{I} }w _{xx}^{2} \mathrm{d}x+C \int _{\mathcal{I} }v _{t}^{2}w _{t}^{2} \mathrm{d}x
 \leq C \int _{\mathcal{I} }w _{xx}^{2} \mathrm{d}x+C \|w _{t}\|_{L ^{\infty}}^{2}\int _{\mathcal{I} }v _{t}^{2} \mathrm{d}x\\
     & \displaystyle \leq  C \|w _{xx}\|_{L ^{2}}^{2}+C \|w _{t}\|_{L ^{2}} \|w _{tx}\|_{L ^{2}} \|v\|_{L ^{2}}^{2}\\
     & \displaystyle \leq \frac{1}{16}\|w _{xt}\|_{L ^{2}}^{2}+C \|w _{xx}\|_{L ^{2}}^{2}+C \|w _{t}\|_{L ^{2}}^{2}\|v\|_{L ^{2}}^{4}  \\
    & \displaystyle \leq \frac{1}{16} \|w _{tx}\|_{L ^{2}}^{2}+ C \|w _{t}\|_{L ^{2}}^{2} +C\left\| w _{t}\right\|_{L ^{2}}^{2}\|v \|_{L ^{2}}^{4}+C \| w _{x}\|_{L ^{2}}^{2}\|v _{x}\|_{L ^{2}}^{2}(1+ \|v _{x}\|_{L ^{2}}^{2}).
\end{aligned}
\end{eqnarray}
It now remains to estimate the last term on the right hand side of \eqref{last-term0}. By \eqref{sobole}, \eqref{v-t-esti-0}, \eqref{w-xx-esti} and the Cauchy-Schwarz inequality, we get
\begin{align}\label{split-2}
\displaystyle  - \int _{\mathcal{I} }w _{x}v _{t}w _{xt} \mathrm{d}x & \leq \frac{1}{16}\int _{\mathcal{I} }w _{xt} ^{2} \mathrm{d}x+C\int _{\mathcal{I} }v _{t}^{2}w _{x}^{2} \mathrm{d}x\leq \frac{1}{16} \int _{\mathcal{I} }w _{xt} ^{2} \mathrm{d}x+C \|w _{x}\|_{L ^{\infty}}^{2}\int _{\mathcal{I} }v _{t}^{2} \mathrm{d}x
  \nonumber \\
  & \displaystyle \leq  \frac{1}{16} \int _{\mathcal{I} }w _{xt} ^{2} \mathrm{d}x+C\left( \|w _{x}\|_{L ^{2}}\|w _{xx}\|_{L ^{2}} +\|w _{x}\|_{L ^{2}} ^{2}\right) \|v \|_{L ^{2}}^{2}
   \nonumber \\
   &\displaystyle \leq  \frac{1}{16} \int _{\mathcal{I} }w _{xt} ^{2} \mathrm{d}x+C\|w _{xx}\|_{L ^{2}} ^{2}+C \|w _{x}\| _{L ^{2}}^{2}\|v\|_{L ^{2}} ^{4}+C\|w _{x}\|_{L ^{2}} ^{2}\|v \|_{L ^{2}}^{2}
    \nonumber \\
    & \displaystyle\leq  \frac{1}{16} \int _{\mathcal{I} }w _{xt} ^{2} \mathrm{d}x+C \|w _{t}\|_{L ^{2}}^{2}+ C \| w _{x}\|_{L ^{2}}^{2}\|v _{x}\|_{L ^{2}}^{4}+C \| w _{x}\|_{L ^{2}}^{2}\|v _{x}\|_{L ^{2}}^{2}
     \nonumber \\
     & \displaystyle \quad+C \|w _{x}\| _{L ^{2}}^{2}\|v\|_{L ^{2}} ^{4}+C \|w _{x}\|_{L ^{2}} ^{2}\|v \|_{L ^{2}}^{2} .
 \end{align}
 With \eqref{split-1} and \eqref{split-2}, we then update \eqref{last-term0} as
 \begin{align}\label{w-t-last}
 \displaystyle  \displaystyle \int _{\mathcal{I} }w _{x}v _{xt}w _{t} \mathrm{d}x  & \leq \frac{1}{8} \int _{\mathcal{I} } w _{xt}^{2} \mathrm{d}x+C \|w _{t}\|_{L ^{2}}^{2}+ C \left\| w _{t}\right\|_{L ^{2}}^{2}\|v \|_{L ^{2}}^{4}+ C \| w _{x}\|_{L ^{2}}^{2}\|v _{x}\|_{L ^{2}}^{4}
  \nonumber \\
  & \displaystyle \quad +C  \| w _{x}\|_{L ^{2}}^{2}\|v _{x}\|_{L ^{2}}^{2}+C \|w _{x}\| _{L ^{2}}^{2}\|v\|_{L ^{2}} ^{4}+C \|w _{x}\|_{L ^{2}} ^{2}\|v \|_{L ^{2}}^{2}
 \end{align}
 for any $ \eta>0 $. Combining \eqref{w-t-esti}, \eqref{w-t-first}, \eqref{w-t-second} and \eqref{w-t-last}, we arrive at
 \begin{align*}
 \displaystyle  \frac{\mathrm{d}}{\mathrm{d}t} \int _{\mathcal{I} }w _{t}^{2} \mathrm{d}x+\int _{\mathcal{I} }w _{xt}^{2} \mathrm{d}x &\leq C \left\|v _{t}\right\|_{L ^{2}}^{2}+C \left\|w _{t}\right\|_{L ^{2}}^{2}(1+\|v\|_{H ^{1}}^{4}) +C  \| w _{x}\|_{L ^{2}}^{2}\|v\|_{H^{1}}^{2}+ C  \| w _{x}\|_{L ^{2}}^{2}\|v \|_{H ^{1}}^{4}.
 \end{align*}
 Integrating the above inequality over $ [0,t] $ for any $ t \in [0,T] $, thanks to \eqref{con-v-w-h1-esti}, \eqref{v-t-esti-0} and $ w _{t}\vert_{t=0}= w _{0xx}-Mv _{0x}- w _{0x}v _{0x} $ from $ \eqref{D-0-reforu}_{1} $, one can show that
 \begin{align}\label{w-t-final-esti}
  &\displaystyle  \int _{\mathcal{I} }w _{ t}^{2} \mathrm{d}x+ \int _{0}^{t}\int _{\mathcal{I} }w _{x \tau}^{2} \mathrm{d}x \mathrm{d}\tau
   \nonumber \\
   &~\displaystyle \leq C \|w _{0xx}\|_{L ^{2}}^{2}+\|(w _{0x}+M)v _{0x}\|_{L ^{2}}^{2}+C \int _{0}^{t}\|v _{\tau}\|_{L ^{2}}^{2}\mathrm{d}\tau
    \nonumber \\
    &~\displaystyle \quad +\sup _{\tau \in [0,t]}\|v \|_{H ^{1}}^{4}\int _{0}^{t}\left( \|w _{\tau}\|_{L ^{2}}^{2}+\|w _{x}\| _{L ^{2}}^{2}\right) \mathrm{d}\tau + \sup _{\tau \in [0,t]}\|v \|_{H ^{1}}^{2}\int _{0}^{t} \|w _{x}\|_{L ^{2}}^{2}\mathrm{d}\tau
      \nonumber \\
      &~\displaystyle \leq C \|w _{0xx}\|_{L ^{2}}^{2}+C \Big(\tilde{\sigma} ^{2}\|v _{0}\|_{H ^{1}}^{2}+\|w _{0}\|_{ H ^{1}}^{2}\Big)\Big(1+\tilde{\sigma} ^{4}\|v _{0}\|_{H ^{1}}^{4}+\|w _{0}\|_{ H ^{1}}^{4}\Big).
  \end{align}
  Recalling \eqref{con-v-w-h1-esti} and \eqref{w-xx-esti}, we then get
  \begin{align}\label{w-xx-sing-esti}
  \displaystyle \int _{\mathcal{I} }w _{xx}^{2} \mathrm{d}x  &\leq C \|w _{t}\|_{L ^{2}}^{2}+C \| w _{x}\|_{L ^{2}}^{2}\|v _{x}\|_{L ^{2}}^{4}+C \| w _{x}\|_{L ^{2}}^{2}\|v _{x}\|_{L ^{2}}^{2}
   \nonumber \\
   & \displaystyle \leq C \|w _{0xx}\|_{L ^{2}}^{2}+C \Big(\tilde{\sigma} ^{2}\|v _{0}\|_{H ^{1}}^{2}+\|w _{0}\|_{ H ^{1}}^{2}\Big)\Big(1+\tilde{\sigma} ^{4}\|v _{0}\|_{H ^{1}}^{4}+\|w _{0}\|_{ H ^{1}}^{4}\Big),
  \end{align}
  where we have used \eqref{con-v-w-h1-esti}. Consequently, we obtain
\begin{align*}
  \displaystyle  \int _{\mathcal{I} }w _{xx}^{2} \mathrm{d}x  \leq \frac{9}{4}\tilde{\sigma} ^{2} \ \ \mbox{for any } t \in [0,T],
 \end{align*}
 provided
 \begin{gather}\label{A-CONSTRIAN}
 \displaystyle  C \|w _{0xx}\|_{L ^{2}}^{2}+C \Big(\tilde{\sigma} ^{2}\|v _{0}\|_{H ^{1}}^{2}+\|w _{0}\|_{ H ^{1}}^{2}\Big)\Big(1+\tilde{\sigma} ^{4}\|v _{0}\|_{H ^{1}}^{4}+\|w _{0}\|_{ H ^{1}}^{4}\Big) \leq \frac{9}{4}\tilde{\sigma} ^{2} .
 \end{gather}
 It should be pointed out the constraint \eqref{A-CONSTRIAN} on $ \tilde{\sigma} $ is reachable. Indeed, if we fix the constant $ \tilde{\sigma} \in (\max\{1, 2 \sqrt{C} \|w _{0xx}\|_{L ^{2}}\},+\infty)  $, then \eqref{A-CONSTRIAN} is automatically satisfied provided $ \|v _{0}\|_{ H ^{1}} +\|w _{0}\| _{H ^{1}}$ is suitably small. The proof is complete.
\end{proof}

\begin{remark}
According to \eqref{w-xx-sing-esti}, we can fix the constant $ \tilde{\sigma} \in (\max\{1, 2 \sqrt{C} \|w _{0xx}\|_{L ^{2}}\},+\infty) $. Furthermore, let the constant $ \tilde{\delta} $ suitably small such that \eqref{constrain-2-D0} and \eqref{constain-1-D0} hold. Then by \eqref{con-v-w-h1-esti} and Lemma \ref{lem-bd-w-xx-singl-D0}, if $ \|v _{0}\|_{ H ^{1}} +\|w _{0}\| _{H ^{1}}$ is small enough, the a priori assumption \eqref{apriori-ass-D-0} is closed.
\end{remark}

Now we have closed the \emph{a priori} assumptions in \eqref{apriori-ass-D-0} and proved most of the estimates in \eqref{decay-and-bded}. To guarantee the global existence of the solution $ (w,v) $, we need to derive some more higher-order estimates (i.e., the rest of the estimates in \eqref{decay-and-bded}), and ultimately end the proof of Proposition \ref{prop-apriori-D-0}.
\begin{lemma}\label{lem-higher-D0}
Under the conditions of Lemmas \ref{lem-h-1-D0}--\ref{lem-bd-w-xx-singl-D0}, we get for any $ t \in [0,T] $ that
 \begin{gather*}
 \displaystyle \|v _{xx}(\cdot,t)\|_{ L ^{2}}^{2}+\int _{0}^{t} \left( \|w _{xxx}\|_{L ^{2}}^{2}+\|v _{\tau}\| _{H ^{2}}^{2}\right)\mathrm{d}\tau \leq C,
 \end{gather*}
 where $ C>0 $ is a constant independent of $ t $.
\end{lemma}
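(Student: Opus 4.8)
The plan is to first upgrade the existing estimates to an $L^{2}$-bound on $v_{xx}$, then to read off the bound on $w_{xxx}$ from an algebraic identity coming from the $w$-equation, and finally to control $v_{\tau}$ in $H^{2}$ directly from the ODE structure of the second equation in \eqref{D-0-reforu}. Throughout we freely use the decay estimate \eqref{decay-first} (so that $\|w\|_{H^{1}},\|v\|_{H^{1}}$ are small and decay exponentially), the bound $\|w_{xx}\|_{L^{2}}\leq\frac{3}{2}\tilde{\sigma}$ from Lemma \ref{lem-bd-w-xx-singl-D0}, the two-sided bound $\frac{M}{2}\leq M+w_{x}\leq\frac{3M}{2}$ from \eqref{vfi-M-bd}, and \eqref{w-t-final-esti}, which in particular gives $\int_{0}^{t}\|w_{x\tau}\|_{L^{2}}^{2}\,\mathrm{d}\tau\leq C$.

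\emph{Step 1 (the main step): an $L^{2}$-bound for $v_{xx}$.} Differentiating \eqref{v-x-eq-D0} once more in $x$ yields
\begin{gather}\label{v-xx-eq-D0}
v_{xxt}=-(M+w_{x})v_{xx}-2w_{xx}v_{x}-w_{xxx}v.
\end{gather}
Testing \eqref{v-xx-eq-D0} against $v_{xx}$ over $\mathcal{I}$ and using $M+w_{x}\geq\frac{M}{2}$ gives
\begin{gather*}
\frac{1}{2}\frac{\mathrm{d}}{\mathrm{d}t}\|v_{xx}\|_{L^{2}}^{2}+\frac{M}{2}\|v_{xx}\|_{L^{2}}^{2}\leq-2\int_{\mathcal{I}}w_{xx}v_{x}v_{xx}\,\mathrm{d}x-\int_{\mathcal{I}}w_{xxx}v\,v_{xx}\,\mathrm{d}x.
\end{gather*}
The last integral couples $v_{xx}$ to the not-yet-controlled $w_{xxx}$; to handle it we differentiate the first equation in \eqref{D-0-reforu} once in $x$ to obtain the pointwise identity
\begin{gather}\label{wxxx-iden}
w_{xxx}=w_{xt}+Mv_{xx}+w_{xx}v_{x}+w_{x}v_{xx},
\end{gather}
and substitute \eqref{wxxx-iden} into $-\int_{\mathcal{I}}w_{xxx}v\,v_{xx}\,\mathrm{d}x$. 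The two contributions $-M\int_{\mathcal{I}}v\,v_{xx}^{2}\,\mathrm{d}x$ and $-\int_{\mathcal{I}}w_{x}v\,v_{xx}^{2}\,\mathrm{d}x$ combine into $-\int_{\mathcal{I}}(M+w_{x})v\,v_{xx}^{2}\,\mathrm{d}x\leq0$ since $v\geq0$ and $M+w_{x}\geq0$; the remaining terms $-\int_{\mathcal{I}}w_{xt}v\,v_{xx}\,\mathrm{d}x$ and $-\int_{\mathcal{I}}w_{xx}v_{x}v\,v_{xx}\,\mathrm{d}x$, together with $-2\int_{\mathcal{I}}w_{xx}v_{x}v_{xx}\,\mathrm{d}x$, are bounded by the Cauchy--Schwarz inequality, the inequality \eqref{sobole} applied to $v_{x}$, the bound $\|w_{xx}\|_{L^{2}}\leq\frac{3}{2}\tilde{\sigma}$, and the smallness of $\|v\|_{L^{\infty}}$ and $\|w_{x}\|_{L^{\infty}}$; all resulting $\|v_{xx}\|_{L^{2}}^{2}$-contributions are absorbed into $\frac{M}{2}\|v_{xx}\|_{L^{2}}^{2}$ (their coefficients being either at our disposal through Young's inequality or small), leaving only multiples of $\|w_{xt}\|_{L^{2}}^{2}$ and of $\|v_{x}\|_{L^{2}}^{2}$. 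This produces
\begin{gather*}
\frac{\mathrm{d}}{\mathrm{d}t}\|v_{xx}\|_{L^{2}}^{2}+c\,\|v_{xx}\|_{L^{2}}^{2}\leq C\|w_{xt}\|_{L^{2}}^{2}+C\|v_{x}\|_{L^{2}}^{2}
\end{gather*}
for some $c>0$, and since $\int_{0}^{t}\|w_{x\tau}\|_{L^{2}}^{2}\,\mathrm{d}\tau\leq C$ while $\|v_{x}\|_{L^{2}}^{2}$ decays exponentially, integrating in time gives $\|v_{xx}(\cdot,t)\|_{L^{2}}^{2}+\int_{0}^{t}\|v_{xx}\|_{L^{2}}^{2}\,\mathrm{d}\tau\leq C$ for all $t\in[0,T]$.

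\emph{Step 2: bounds on $w_{xxx}$ and on $v_{\tau}$.} From \eqref{wxxx-iden} and \eqref{sobole},
\begin{gather*}
\|w_{xxx}\|_{L^{2}}^{2}\leq C\big(\|w_{xt}\|_{L^{2}}^{2}+\|v_{xx}\|_{L^{2}}^{2}+\|w_{xx}\|_{L^{2}}^{2}\|v_{x}\|_{L^{\infty}}^{2}+\|w_{x}\|_{L^{\infty}}^{2}\|v_{xx}\|_{L^{2}}^{2}\big),
\end{gather*}
and because $\int_{0}^{t}\|v_{x}\|_{L^{\infty}}^{2}\,\mathrm{d}\tau\leq C\int_{0}^{t}(\|v_{x}\|_{L^{2}}^{2}+\|v_{xx}\|_{L^{2}}^{2})\,\mathrm{d}\tau\leq C$ by Step 1, integration in time yields $\int_{0}^{t}\|w_{xxx}\|_{L^{2}}^{2}\,\mathrm{d}\tau\leq C$. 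Next, differentiating $v_{t}=-(M+w_{x})v$ in $x$ once and twice and using $\|w_{xx}\|_{L^{2}}\leq\frac{3}{2}\tilde{\sigma}$ together with the boundedness of $\|v\|_{L^{\infty}}$ gives $\|v_{xt}\|_{L^{2}}^{2}\leq C\|v_{x}\|_{L^{2}}^{2}+C\|v\|_{H^{1}}^{2}$ and $\|v_{xxt}\|_{L^{2}}^{2}\leq C\|v_{xx}\|_{L^{2}}^{2}+C\|v_{x}\|_{L^{\infty}}^{2}+C\|w_{xxx}\|_{L^{2}}^{2}$; combined with \eqref{v-t-esti-0}, Step 1 and the bound just obtained for $\int_{0}^{t}\|w_{xxx}\|_{L^{2}}^{2}\,\mathrm{d}\tau$, this gives $\int_{0}^{t}\|v_{\tau}\|_{H^{2}}^{2}\,\mathrm{d}\tau\leq C$. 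Collecting Steps 1 and 2 completes the proof.

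\emph{Main obstacle.} The crux is the term $-\int_{\mathcal{I}}w_{xxx}v\,v_{xx}\,\mathrm{d}x$ in Step 1: a priori it couples the quantity $\|v_{xx}\|_{L^{2}}$ under estimation to $\|w_{xxx}\|_{L^{2}}$, which is not yet controlled, and integration by parts alone merely reshuffles the difficulty. The resolution is to eliminate $w_{xxx}$ via the identity \eqref{wxxx-iden}: this produces the sign-definite term $-\int_{\mathcal{I}}(M+w_{x})v\,v_{xx}^{2}\,\mathrm{d}x\leq0$ (thanks to $v\geq0$ and $M+w_{x}\geq\frac{M}{2}>0$) plus remainders in which the only non-small factor, namely $w_{xx}$ (bounded merely by the fixed constant $\tilde{\sigma}$), always appears multiplied by the genuinely small and exponentially decaying quantities $\|v\|_{L^{\infty}}$ or $\|v_{x}\|_{L^{2}}$, so that Young's inequality absorbs these remainders into the dissipation $\frac{M}{2}\|v_{xx}\|_{L^{2}}^{2}$ and a time-integrable remainder.
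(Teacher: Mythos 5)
Your proof is correct, and its skeleton (the equation for $v_{xx}$, testing against $v_{xx}$, the identity $w_{xxx}=w_{xt}+Mv_{xx}+w_{xx}v_{x}+w_{x}v_{xx}$, and the final ODE bookkeeping for $v_{\tau}$) coincides with the paper's; the genuine divergence is in how the coupling term $-\int_{\mathcal I}w_{xxx}v\,v_{xx}\,\mathrm{d}x$ is tamed. The paper keeps $w_{xxx}$ intact, integrates the energy identity in time first (see \eqref{int-inequal}), invokes the explicit solution formula \eqref{v-formula} together with \eqref{vfi-M-bd} to get the pointwise decay $\|v\|_{L^{\infty}}\leq C{\mathop{\mathrm{e}}}^{-Mt/2}$, pre-estimates $\int_0^t\|w_{xxx}\|_{L^2}^2\,\mathrm{d}\tau\leq C+C\int_0^t\|v_{xx}\|_{L^2}^2\,\mathrm{d}\tau$ as in \eqref{w-xxx-multi-esti}, and then closes with Gronwall's inequality using the time-integrable coefficient ${\mathop{\mathrm{e}}}^{-M\tau/2}+\|w_{xx}\|_{L^2}^2$ (which in turn needs the auxiliary bound \eqref{w-xx-multi}). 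You instead substitute the identity for $w_{xxx}$ pointwise into the bad term, extract the sign-definite contribution $-\int_{\mathcal I}(M+w_x)v\,v_{xx}^2\,\mathrm{d}x\leq 0$ (using $v\geq0$ from Proposition \ref{prop-local-D-0} together with \eqref{vfi-M-bd}), and absorb the remainders directly at the level of the differential inequality, with no Gronwall argument and no use of the formula \eqref{v-formula}. The trade is clean: the paper exploits the explicit exponential decay of $v$, you exploit its nonnegativity and the structure of the $w$-equation; your absorption of terms like $C\tilde{\sigma}\|v_x\|_{L^{\infty}}\|v_{xx}\|_{L^2}$ via Young's inequality into $\frac{M}{2}\|v_{xx}\|_{L^2}^2$ plus a remainder $C\tilde{\sigma}^4\|v_x\|_{L^2}^2$, time-integrable by \eqref{FIRST-multi-inte}, checks out, and your Step 2 matches the paper's.
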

\begin{proof}
Differentiating the first equation in \eqref{D-0-reforu} with respect to $ x $, we get
\begin{align*}
\displaystyle w _{xxx}=w _{xt}+Mv _{xx}+w _{xx}v _{x}+w _{x}  v_{xx}.
\end{align*}
We thus derive, thanks to \eqref{sobole}, \eqref{con-v-w-h1-esti}, \eqref{w-t-final-esti} and \eqref{w-xx-sing-esti}, that
\begin{align}\label{w-xxx-multi-esti}
\displaystyle \int _{0}^{t}\| w _{xxx}\|_{L ^{2}}^{2}\mathrm{d}\tau& \leq \int _{0}^{t}\| w _{x \tau}\|_{L ^{2}}^{2}\mathrm{d}\tau + \int _{0}^{t}\|v _{x}\|_{L ^{\infty}}^{2}\| w _{xx}\|_{L ^{2}}^{2}\mathrm{d}\tau  + M \int _{0}^{t}\|v _{xx}\|_{L ^{2}}^{2}\mathrm{d}\tau
 \nonumber \\
 & \displaystyle \quad
 + \int _{0}^{t}\| w _{x}\|_{L ^{\infty}}^{2}\|v  _{xx}\|_{L ^{2}}^{2}\mathrm{d}\tau
  \leq  C+ C\int _{0}^{t}\|v _{xx}\|_{L ^{2}}^{2}\mathrm{d}\tau
\end{align}
for any $ t \in [0,T] $, where the constant $ C>0 $ is independent of $ t $. From the second equation in \eqref{D-0-reforu}, we have
 \begin{gather}
  \displaystyle v(x,t) =v _{0}{\mathop{\mathrm{e}}}^{-\int _{0}^{t}(w _{x}+M)\mathrm{d}\tau}, \label{v-formula}\\
  \displaystyle v _{txx}=-(w _{x}+M)v _{xx}-w _{xxx}v-2w _{xx}v _{x}. \label{v-xx-eq}
  \end{gather}
  Testing \eqref{v-xx-eq} against $ v _{xx} $, and then integrating the resulting equation over $ (0,t) $ for any $ t \in (0,T] $, we get
  \begin{align}\label{int-inequal}
  \displaystyle\int _{\mathcal{I}} v _{xx}^{2}(\cdot,t) \mathrm{d}x + \int _{0}^{t}\int _{\mathcal{I}} (w _{x}+M)v _{xx}^{2} \mathrm{d}x \mathrm{d}\tau=- \int _{0}^{t}\int _{\mathcal{I}} w _{xxx}v v _{xx} \mathrm{d}x \mathrm{d}\tau-  2\int _{0}^{t}\int _{\mathcal{I}} w _{xx}v _{x}v _{xx} \mathrm{d}x \mathrm{d}\tau,
  \end{align}
  where, due to \eqref{vfi-M-bd}, \eqref{w-xxx-multi-esti} and \eqref{v-formula}, it holds that
  \begin{align}\label{v-xx-firs}
  \displaystyle  \int _{0}^{t}\int _{\mathcal{I}} w _{xxx}v v _{xx} \mathrm{d}x \mathrm{d}\tau & \leq C \int _{0}^{t}\|v\|_{L ^{\infty}} \|w _{xxx}\|_{L ^{2}}\|v _{xx}\|_{L ^{2}} \mathrm{d}\tau
 \leq C 		\int _{0}^{t}{\mathop{\mathrm{e}}}^{- \frac{M}{2}\tau} \|w _{xxx}\|_{L ^{2}}\|v _{xx}\|_{L ^{2}} \mathrm{d}\tau
   \nonumber \\
   & \displaystyle\leq \frac{M}{8} \int _{0}^{t}\int _{\mathcal{I}}v _{xx}^{2} \mathrm{d}x \mathrm{d}\tau+ \int _{0}^{t}{\mathop{\mathrm{e}}}^{- \frac{M}{2}\tau}\|v _{xx}\|_{L ^{2}}^{2} \mathrm{d}\tau+C
  \end{align}
  for some constant $ C>0 $ independent of $ t $. For the last term on the right hand side of \eqref{int-inequal}, by \eqref{sobole}, \eqref{con-v-w-h1-esti} and the Cauchy-Schwarz inequality, we have
  \begin{align}\label{v-xx-sec}
   \displaystyle  -  2\int _{0}^{t}\int _{\mathcal{I}} w _{xx}v _{x}v _{xx} \mathrm{d}x \mathrm{d}\tau &\leq \frac{M}{8} \int _{0}^{t}\int _{\mathcal{I}}v _{xx}^{2} \mathrm{d}x \mathrm{d}\tau + \int _{0}^{t}\|v _{x}\|_{L ^{\infty}}^{2}\int _{\mathcal{I}} w _{xx}^{2} \mathrm{d}x \mathrm{d}\tau
    \nonumber \\
    & \displaystyle \leq \frac{M}{8} \int _{0}^{t}\int _{\mathcal{I}}v _{xx}^{2} \mathrm{d}x \mathrm{d}\tau+ \int _{0}^{t}(\|v _{x}\|_{L ^{2}}^{2}+\|v _{xx}\|_{L ^{2}}^{2})\|w _{xx}\|_{L ^{2}}^{2}\mathrm{d} \tau      \nonumber \\
     & \displaystyle \leq  \frac{M}{8} \int _{0}^{t}\int _{\mathcal{I}}v _{xx}^{2} \mathrm{d}x \mathrm{d}\tau+C \int _{0}^{t}\|w _{xx}\|_{L ^{2}}^{2}(1+\|v _{xx}\|_{L ^{2}}^{2})\mathrm{d}\tau.
   \end{align}
   Inserting \eqref{v-xx-firs} and \eqref{v-xx-sec} into \eqref{int-inequal}, by \eqref{vfi-M-bd}, it follows that
   \begin{align}\label{v-xx-final-inte-ineq}
   &\displaystyle  \int _{\mathcal{I}} v _{xx}^{2}(\cdot,t) \mathrm{d}x + \frac{M}{4} \int _{0}^{t}\int _{\mathcal{I}}v _{xx}^{2} \mathrm{d}x \mathrm{d}\tau
    \nonumber \\
    & \displaystyle~ \leq C \int _{0}^{t}\left( {\mathop{\mathrm{e}}}^{- \frac{M}{2}\tau}+\|w _{xx}\|_{L ^{2}}^{2} \right)\|v _{xx}\|_{L ^{2}}^{2}\mathrm{d}\tau+C\int _{0}^{t}\|w _{xx}\|_{L ^{2}}^{2}\mathrm{d}\tau+C
   \end{align}
   for any $ t \in [0,T] $.  Furthermore, by \eqref{FIRST-multi-inte} and \eqref{w-xx-esti}, one can show for any $ t \in [0,T] $ that
\begin{align}\label{w-xx-multi}
\displaystyle \int _{0}^{t}\|w _{xx}\|_{L ^{2}}^{2}\mathrm{d}\tau & \leq C \int _{0}^{t}\|w _{\tau}\|_{L ^{2}}^{2}\mathrm{d}\tau +\sup _{t \in [0,T]}\left( \|v _{x}\|_{L ^{2}}^{2}+\|v _{x}\|_{L ^{2}}^{4} \right) \int _{0}^{t}\|w _{x}\|_{L ^{2}}^{2}\mathrm{d}\tau \leq C,
\end{align}
where the constant $  C>0$ depends on $\|v _{0}\|_{H ^{1}}  $ and $  \|w _{0}\|_{H ^{1}} $. This along with \eqref{v-xx-final-inte-ineq} and the Gronwall inequality implies that
\begin{align}\label{v-xx-singl}
\displaystyle  \int _{\mathcal{I}} v _{xx}^{2}(\cdot,t) \mathrm{d}x + \frac{M}{4} \int _{0}^{t}\int _{\mathcal{I}}v _{xx}^{2} \mathrm{d}x \mathrm{d}\tau \leq C.
\end{align}
Combining \eqref{v-xx-singl} with \eqref{w-xxx-multi-esti} further yields that
\begin{gather}\label{w-xxx}
\displaystyle  \int _{0}^{t}\| w _{xxx}\|_{L ^{2}}^{2}\mathrm{d}\tau \leq C
\end{gather}
for any $ t \in [0,T] $, where the constant $ C>0 $ is independent of $ t $. Finally, recalling the second equation in \eqref{D-0-reforu}, we get by virtue of \eqref{sobole}, \eqref{con-v-w-h1-esti}, \eqref{w-xx-multi}--\eqref{w-xxx} that
\begin{gather*}
 \displaystyle \int _{0}^{t}\|v _{\tau}(\cdot,\tau)\|_{ H^{2}}^{2}\mathrm{d}\tau \leq C
 \end{gather*}
 for any $ t \in [0,T] $. We thus finish the proof of Lemma \ref{lem-higher-D0}.
\end{proof}

\vspace{2mm}

 \subsection{Proof of Theorem \ref{thm-stabiliy-D0}}
With the global existence result on the initial-boundary value problem \eqref{D-0-reforu} and the decay estimates in \eqref{decay-global-w-v} for $ (w,v) $ at hand, by the same process as in the analysis for the case $ D>0 $ in the previous section, one can easily prove the global existence as well as decay estimates of solutions to the original problem \eqref{1-D-model}, \eqref{bdcon-D-0}. Therefore we omit the details here for brevity. The proof of Theorem \ref{thm-stabiliy-D0} is completed. \hfill $ \square $






 \section*{Acknowledgement} 
G. Hong is partially supported from the CAS AMSS-POLYU Joint Laboratory of Applied Mathematics postdoctoral fellowship scheme.
 Z.A. Wang was supported in part by the Hong Kong Research Grant Council General Research Fund No. PolyU 153031/17P (Q62H) and internal grant No. ZZHY from HKPU.





\end{document}